\renewcommand{\marginpar}[1]{\relax}
\numberwithin{equation}{section}
\begin{document}
\title[Classification of traces on classical pseudodifferential operators]
{Classification of traces and hypertraces on spaces of classical
pseudodifferential operators}

\author{Matthias Lesch}
\address{Mathematisches Institut,
Universit\"at Bonn,
Endenicher Allee 60,
53115 Bonn,
Germany}

\email{ml@matthiaslesch.de, lesch@math.uni-bonn.de}
\urladdr{www.matthiaslesch.de, www.math.uni-bonn.de/people/lesch}

\thanks{The first author was partially supported by the 
        Hausdorff Center for Mathematics and the second 
        author was supported by the Max-Planck-Institut f\"ur Mathematik.
}

\author{Carolina Neira Jim{\'e}nez}
\address{Fakult\"at f\"ur Mathematik, 
Universit\"at Regensburg\\
Universit\"atsstrasse 31\\
93053 Regensburg, Germany}
\email{Carolina.Neira-Jimenez@mathematik.uni-regensburg.de}

\subjclass[2000]{Primary 58J42, Secondary 58J40, 53D05}
\keywords{Classical pseudodifferential operator, trace functional,
canonical trace, noncommutative residue, homogeneous differential form,
symplectic cone, symplectic residue, regularized integral}

\begin{abstract}
Let $M$ be a closed manifold and let $\CL^\bullet(M)$ be the algebra of
classical pseudodifferential operators. The aim of this note is to
classify trace functionals on the subspaces $\CL^a(M)\subset \CL^\bullet(M)$ 
of operators of order $a$. $\CL^a(M)$ is a $\CL^0(M)$--module for any real
$a$; it is an algebra only if $a$ is a non--positive integer. Therefore, 
it turns out to be useful to introduce the notions of pretrace and hypertrace. 
Our main result gives a complete classification of pre-- and hypertraces on
$\CL^a(M)$ for any $a\in\R$, as well as the traces on $\CL^a(M)$ for
$a\in\Z, a\le 0$. We also extend these results to classical pseudodifferential 
operators acting on sections of a vector bundle.

As a byproduct we give a new proof of the
well--known uniqueness results for the Guillemin--Wodzicki residue trace
and for the Kontsevich--Vishik canonical trace.
The novelty of our approach lies in the calculation of
the cohomology groups of homogeneous and $\log$--polyhomogeneous differential forms
on a symplectic cone. This allows to give an extremely simple proof of a
generalization of a Theorem of Guillemin about the representation of
homogeneous functions as sums of Poisson brackets.

\end{abstract}

\maketitle
\tableofcontents

\section{Introduction and formulation of the result}  
Let $M$ be a smooth closed connected riemannian manifold
of dimension $n>1$.\footnote{The case $n=1$ has some peculiarities due to the
non--connectedness of the cosphere bundle of $S^1$. As a consequence many
results need to be slightly modified in the case $n=1$ (see Remark
\ref{rem:OneDim}). These modifications
are more annoying than difficult and for the sake of a clean exposition they are
left to the reader.} We denote by $\CLa$ the space of classical 
pseudodifferential operators of order $a\in\R$ on $M$. There is a little
subtlety here which we need to clarify to avoid possible confusions:
by definition (cf.\ Eq.\ \eqref{eq:3.1}, \eqref{eq:classical} and Section
\plref{sectionclassicalpdos}) a classical pseudodifferential operator of order
$a$ is 
also a classical pseudodifferential operator of order $a+k$ for any
non--negative integer $k$; this convention ensures, e.g., that $\CLa$ is a vector
space and that $\CLa$ is a subspace of $\CL^{a+1}(M)$. 
However, for non--integral $r\ge 0$ the space $\CLa$ is $\emph{not}$
contained in $\CL^{a+r}(M)$. In fact, it is not hard to see that
for such $r$ one has $\CLa\cap \CL^{a+r}(M)=\CL^{-\infty}(M)$,
the latter being the space of smoothing operators.

It is well--known that the residue trace $\Res$, 
which was discovered independently by \textsc{Guillemin} \cite{Gui:NPW}
and \textsc{Wodzicki} \cite{Wod:NRF}, is up to normalization the unique
trace on the algebra $\CL^\Z(M)$ of integer order classical pseudodifferential
operators (\cite{Wod:NRF}, \textsc{Brylinski} and \textsc{Getzler} \cite{BryGet:HAPDO},
\textsc{Fedosov}, \textsc{Golse}, \textsc{Leichtnam}, and \textsc{Schrohe} \cite{FGLS:NRM},
\textsc{Lesch} \cite{Les:NRP}, for a complete account of traces and determinants of
pseudodifferential operators see the recent monograph by \textsc{Scott}
\cite{Sco:TDP}).  $\Res$ is non--trivial only on $\CL^k(M)$ for integers
$k\ge -n$, and it is complemented by the canonical trace, $\TR$, 
of \textsc{Kontsevich} and \textsc{Vishik} \cite{KonVis:GDE}. The latter is defined on
operators of real order $a\not=-n, -n+1,\ldots$, it extends the Hilbert space
trace on smoothing operators and it vanishes on commutators (for the precise
statement see Eq.\ \eqref{eq:CanonicalTrace} below).
By  \textsc{Maniccia, Schrohe,} and \textsc{Seiler} \cite{MSS:UKVT} it is the unique 
functional which is linear on its domain, 
has the trace property and coincides with the $L^2$--operator trace on trace--class operators.

A natural problem which arises is to characterize the traces on 
the spaces $\CL^a(M)$. First, one has to note that
$\CL^a(M)$ is always a $\CL^0(M)$--module; it is an algebra if and only if
$a\in\Z_{\le 0}=\bigl\{ 0, -1, -2,\ldots\bigr\}$. Let us call a functional
$\tau$ on $\CL^a(M)$ a \emph{hypertrace} (resp. \emph{pretrace})
if $\tau\bigl([A,B]\bigr)=0$ for $A\in\CL^0(M), B\in \CLa$ (resp. $A,B\in\CL^{a/2}(M)$),
see Definition \plref{def:PreHyperTrace}.

The above mentioned uniqueness results for $\Res$ and $\TR$ cannot extend to
$\CLa$ for a
simple reason: let $T$ be a distribution on the cosphere bundle $S^*M$ and
denote by $\sigma_a:\CLa\to \cinf{S^*M}$ the leading symbol. Due to
the multiplicativity of the leading symbol (Eq.\ \eqref{eq:SymbolMultiplicative})
the map $T\circ \sigma_a$ is a pretrace and a hypertrace on $\CLa$, and for 
$a\in\Z_{\le 0}$ it is a trace on $\CLa$.
$T\circ \sigma_a$ is called a leading symbol trace by \textsc{Paycha} and
\textsc{Rosenberg} \cite{PayRos:TCCLS}.

For $\CL^0(M)$ it was already proved by \textsc{Wodzicki} \cite{Wod:RCHS}
that any trace is a linear combination of $\Res$ and a leading symbol trace,
see also \textsc{Lescure} and \textsc{Paycha} \cite{LescPay:UMDEP},  and
\textsc{Ponge} \cite{Pon:TPO}.

Before stating our generalization of this result we need to introduce some
more notation: Firstly, for $a\le 0$ we will always consider
$\CLa$ as a subspace of $\cB\bigl(L^2(M)\bigr)$, the bounded linear operators
acting on the Hilbert space $L^2(M)$ of square integrable functions with respect to the
volume measure induced by the riemannian metric. The symbol $\Tr$ will be
reserved for the operator trace on the Schatten ideal $\cB^1(L^2(M))$ of trace
class operators on $L^2(M)$. 

Secondly, for a 
linear functional $\tau:\CL^b(M)\to\C$ and $a\le b$ with $b-a\in\Z$
we will use the abbreviation $\tau_a:=\tau\restriction \CLa$.

Thirdly, we introduce a convenient notation which combines $\TR$ and $\Res$. 
Namely, fix a \emph{linear} functional  
$\widetilde{\Tr}:\CL^0(M)\to\C$ such that for
$a\in\Z_{<-n}=\bigl\{-n-1,-n-2,\ldots\bigr\}$ 
\[\Trt_a=\Trt\restriction\CLa=\Tr\restriction\CLa=\Tr_a,\]
and put 
\begin{equation}\label{eq:134a}  
     \TRb_a:= 
\begin{cases}
       \TR_a, & \textup{if }a\in\R\setminus \Z_{\ge -n},\\
       \Trt_a, & \textup{if } a\in \Z, -n\le a < \frac{-n+1}{2},\\
       \Res_a, & \textup{if } a\in \Z, \frac{-n+1}{2}\le a.
\end{cases}       
\end{equation}

In this note we will prove:

\begin{theorem}\label{t:A} Let $M$ be a closed connected riemannian manifold
 of dimension $n>1$. 

\textup{1. } Let $a\in\R$ and let $\tau$ be a hypertrace on $\CLa$. 
 Then there are uniquely determined $\gl\in\C$ and a  distribution
 $T\in\bigl(\cinf{S^*M}\bigr)^*$ such that
\begin{equation}\label{eq:I1}  
 \tau=T \circ \sigma_a +
     \begin{cases}
        \gl\, \TRb_{a},  & \textup{if } a\notin\Z_{>-n}, \\
        \gl\, \Res_{a},  & \textup{if } a\in\Z_{>-n}.
     \end{cases}
\end{equation}
 
\textup{2. } Let $a\in\Z_{\le 0},$ and denote by 
 \[
 \pi_a:\CL^a(M)\longrightarrow \CLa/\CL^{2a-1}(M)
 \]
the quotient map. Let $\tau:\CL^{a}(M)\to\C$ be a trace. 
Then there are uniquely determined $\gl\in\C$ and $T\in \bigl(\CLa/\CL^{2a-1}(M)\bigr)^*$ such that
\begin{equation}
 \tau= \gl\,\TRb_a + T\circ\pi_a.      
\end{equation}
\end{theorem}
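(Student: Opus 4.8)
The plan is to recover such a functional from its behaviour along the symbol filtration $\CLa\supset\CL^{a-1}(M)\supset\CL^{a-2}(M)\supset\cdots\supset\CL^{-\infty}(M)$, the decisive ingredient being a sharp form of Guillemin's theorem on writing homogeneous functions as finite sums of Poisson brackets of homogeneous functions, which in turn rests on a cohomology computation on the symplectic cone $N=T^*M\setminus 0$. Fix a hypertrace $\tau$ on $\CLa$. Scalar principal symbols commute, so $[\CL^0(M),\CLa]\subseteq\CL^{a-1}(M)$, and in the graded piece $\CL^{a-j}(M)/\CL^{a-j-1}(M)\cong\cinf{S^*M}$ the image of the commutator is exactly the space of finite sums of Poisson brackets of homogeneous functions of complementary degrees (the leading contribution at $j=1$ being $\{\sigma_0(A),\sigma_a(B)\}$). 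Two boundary pieces are immediate: at the top, $\sigma_a\colon\CLa\to\cinf{S^*M}$ is onto with kernel $\CL^{a-1}(M)$ and annihilates $[\CL^0(M),\CLa]$, producing the leading symbol traces $T\circ\sigma_a$; at the bottom, $[\CL^{-\infty}(M),\CL^{-\infty}(M)]=\ker\bigl(\Tr\restriction\CL^{-\infty}(M)\bigr)$, so $\tau$ restricted to the smoothing operators is a multiple of $\Tr$. The task is to understand $\CL^{a-1}(M)$ modulo $[\CL^0(M),\CLa]$, i.e.\ the intermediate graded pieces.

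This is the heart of the argument. One computes the cohomology of the complexes of homogeneous --- and, for the parts of the analysis involving $\TR$, $\log$-polyhomogeneous --- differential forms on $N$. With $E$ the Euler (radial) vector field, Cartan's identity $m\,\beta=d\iota_E\beta+\iota_E d\beta$ on $m$-homogeneous forms exhibits $m^{-1}\iota_E$ as a contracting homotopy, so the degree-$m$ complex is acyclic for every $m\neq 0$, whereas the degree-$0$ complex, which retains the radial direction through the closed form $d\log|\xi|$, has $H^{2n}_{(0)}(N)\cong\C$. Under the isomorphism $h\mapsto h\,\omega^n$ from functions on $N$ homogeneous of degree $m$ onto $2n$-forms homogeneous of degree $m+n$, combined with the (linear-algebra) Lefschetz isomorphism $\alpha\mapsto\alpha\wedge\omega^{n-1}$ between $1$-forms and $(2n-1)$-forms, this yields the sharp Guillemin theorem: a homogeneous function of degree $m$ is a finite sum of Poisson brackets of homogeneous functions if and only if $m\neq -n$, or $m=-n$ and its integral over $S^*M$ against the canonical density vanishes. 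Hence the commutator image fills $\CL^{a-j}(M)/\CL^{a-j-1}(M)$ whenever $a-j\neq -n$ and has codimension exactly one, cut out by $\Res$, when $a-j=-n$.

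Lifting the symbol identities back to operators by asymptotic summation and iterating down the filtration, one obtains that $\CLa$ modulo $[\CL^0(M),\CLa]$ is spanned by $\cinf{S^*M}$ through $\sigma_a$, together with at most one further dimension coming either from the critical level $a-j=-n$ (when it occurs with $j\ge 1$) or from the smoothing tail. It remains to identify this extra direction with a nonzero multiple of $\TRb_a$ in each regime. For $a\in\Z_{>-n}$ the level $a-j=-n$ occurs with $j\ge 1$ and is invisible to $\sigma_a$, while the smoothing tail contributes nothing new because $[\CL^0(M),\CLa]$ already contains smoothing operators of nonzero trace (the trace defect of the noncommutative residue); so the extra direction is $\Res_a$. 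For $a\notin\Z_{>-n}$ no interior level is critical --- when $a=-n$ the critical level is the leading one, so $\Res_{-n}$ is itself a leading symbol trace --- and the extra direction is the honest trace $\Tr$ for $a<-n$, the canonical trace $\TR_a$ (a hypertrace by its defining property) when $a\notin\Z$ and $a>-n$, and the chosen extension $\Trt_{-n}$ when $a=-n$; in every case this equals $\TRb_a$. Uniqueness of $T$ and $\lambda$ is then automatic, since $\sigma_a$ is surjective and $\TRb_a$ is not a leading symbol trace, and the continuity of $T$ (so that it is a genuine distribution) follows from the automatic continuity of traces on $\CLa$, or can be built into the choice of splitting.

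For part 2 one runs the same machine with commutators $[\CLa,\CLa]$ in place of $[\CL^0(M),\CLa]$. Now $[\CLa,\CLa]\subseteq\CL^{2a-1}(M)$, and since Poisson brackets of homogeneous symbols of order $\le a$ can only reach graded degrees $\le 2a-1$, every trace automatically factors through $\CLa/\CL^{2a-1}(M)$ on the part above $\CL^{2a-1}(M)$ --- this is the term $T\circ\pi_a$ --- while on $\CL^{2a-1}(M)$ the same acyclicity and smoothing-tail analysis leaves exactly the one-dimensional direction $\TRb_a$: equal to $\Res_a$ precisely when the critical level $-n$ is not killed by $\pi_a$ (i.e.\ $-n\le 2a-1$, equivalently $a\ge\frac{-n+1}{2}$), and otherwise the trace extension $\Trt_a$, or the honest trace $\Tr_a$ for $a<-n$. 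I expect the main obstacle to be exactly the cohomological input of the second paragraph --- getting the homogeneous and $\log$-polyhomogeneous form complexes and extracting the sharp ``if and only if'' of Guillemin's theorem --- compounded by the care needed to lift symbol identities to operators along the infinite filtration and to pin down, in each regime of $a$, that the surviving one-dimensional direction really is $\TRb_a$ and not some other extension of the trace.
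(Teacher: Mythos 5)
Your proposal tracks the paper's own proof closely: homogeneous (and $\log$-polyhomogeneous) cohomology of the cone via the Euler/Cartan contracting homotopy, the generalized Guillemin theorem on Poisson brackets for arbitrary pairs of degrees $(l,m)$ — including the delicate case $l=m=0$, which you correctly flag as requiring the device of pulling a $1$-form back from the base and wedging with $\omega^{n-1}$ — then asymptotic summation to lift the symbol identities to a commutator representation of operators, combined with Guillemin's uniqueness of the trace on $\CL^{-\infty}(M)$. This is the paper's strategy. But there is a genuine gap at the one step that distinguishes the case $a\in\Z_{>-n}$, and it is the hardest analytic point of the whole argument.

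You assert that for $a\in\Z_{>-n}$ "the smoothing tail contributes nothing new because $[\CL^0(M),\CLa]$ already contains smoothing operators of nonzero trace (the trace defect of the noncommutative residue)." That assertion is precisely what has to be \emph{proved}, and the parenthetical does not constitute an argument. It is equivalent to Ponge's result that $\CL^{-\infty}(M)\subset[\CL^0(M),\CL^{-n+1}(M)]$, and nothing in your symbol-level machinery forces it: the graded calculus is blind to $\CL^{-\infty}(M)$, so a priori the constant $\lambda$ with $\tau\restriction\CLsm=\lambda\,\Tr$ need not vanish, and then $\Res_a$ would not be the unique extra direction. The paper closes this gap in two ways: one shows that convolution by $y_j/|y|^2$ is a classical operator of order $-n+1$ on $\R^n$, hence any compactly supported smoothing kernel can be written as $\sum_j[\Op(x_j),B_j]$ with $B_j\in\CL^{-n+1}$; the other, self-contained, writes a Schwartz function of integral one as $\sum_k\partial_{\xi_k}\tau_k$ with $\tau_k\in\CS^a(\R^n)$ and uses this, together with the Stokes property, to realize a smoothing operator of trace one as a sum of commutators in $[\CL^0,\CLa]$. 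Some such construction is unavoidable. A lesser omission concerns part 2: to keep both commutator factors in $\CLa$ and to absorb the smoothing tail into $[\CLa,\CLa]$ when $-n+1\le 2a\le 0$, the paper relies on the algebraic lemma $[\CL^0(M),\CL^{\alpha+\beta}(M)]\subset[\CL^\alpha(M),\CL^\beta(M)]$ — equivalently, it first observes that $\tau\restriction\CL^{2a}(M)$ is a pretrace, hence a hypertrace, and reduces to part 1 — and running "the same machine" for part 2 is not possible without some such order-balancing step.
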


This Theorem is a summary of Theorem \ref{t4}, Theorem \ref{t5} and Corollary \ref{cor:ClassTracesCla}
in the text. It extends to the vector bundle case. This requires even more notation and is 
therefore not reproduced here in the introduction. 
The interested reader is referred to Theorem \ref{t6} in Section
\ref{s:TracesVectBund}.

Let us briefly describe the main steps in the proof of Theorem
\plref{t:A}:

In order to classify (pre--, hyper--)traces on $\CLa$ it is natural
to ask for a representation of an operator $A\in\CLa$ as a sum
of commutators. Indeed, the uniqueness of the residue trace
$\Res$ as the unique trace on the algebra $\CL^\Z(M)$ (see the first
paragraph of this Section) essentially follows from the fact
that there exist $P_1,\ldots,P_N\in \CL^{1}(M)$, $Q\in\CL^{-n}(M)$,
such that for any $A\in \CL^{a}(M)$ 
there exist $Q_1,\ldots,Q_N\in \CL^{a}(M)$ 
and $R\in \CL^{-\infty}(M)$ such that 
\begin{equation} \label{eq:CommutatorRepresentation1}
 A=\sum_{j=1}^N[P_j,Q_j] + \Res(A)\, Q + R.
\end{equation}
This is due to \textsc{Wodzicki} \cite{Wod:LISA}, see also
\cite[Prop.\ 4.7 and Prop.\ 4.9]{Les:NRP}.

Since the operators $P_1,\ldots,P_N$ are of order $1$ they do not
belong to $\CLa$ except if $a\in\Z_{\ge 1}$. Hence to classify
pre-- and hypertraces on $\CLa$ we need to generalize
\eqref{eq:CommutatorRepresentation1} such that the order of the
$P_1,\ldots,P_N$
can be chosen to be an arbitrary real number $m$.

Indeed we will prove in Theorem \plref{t2} below that  for real numbers $m, a$ there 
exist $P_1,\ldots,P_N\in \CL^{m}(M)$, such that for any $A\in \CL^{a}(M)$ 
there exist $Q_1,\ldots,Q_N\in \CL^{a-m+1}(M)$ 
and $R\in \CL^{-\infty}(M)$ such that 
\begin{equation} \label{eq:CommutatorRepresentation2}
 A=\sum_{j=1}^N[P_j,Q_j] + \Res(A)\, Q + R. 
\end{equation}

From this representation and the well--known fact that the Hilbert space trace
is the unique trace on $\CL^{-\infty}(M)$ (\textsc{Guillemin} \cite[Thm.\ A.1]{Gui:RTFIO}, see
Theorem \plref{t:SmoothCommutator} below) one now deduces the first line
of \eqref{eq:I1} (Theorem \plref{t4}).

For the second line of \eqref{eq:I1} (Theorem \plref{t5}) one still applies
\eqref{eq:CommutatorRepresentation2} but then in addition one 
needs to show that if $a\in \Z_{>-n}$ and if $\tau$ is a hypertrace on $\CLa$ then
$\tau\restriction\CL^{-\infty}(M)=0$. This follows from a result of
\textsc{Ponge} (\cite[Prop.\ 4.2]{Pon:TPO}, see Proposition
\plref{p:SmoothCommutator1} below), for which we present
an alternative proof (Lemma \plref{l:PDOkernel}).

In Subsection \plref{ss:AAT} we present an alternative approach
which is independent of Ponge's result. For this alternative approach we
received considerable help from Sylvie Paycha.

For proving Theorem \plref{t:A} (2.) as well as for showing that every
pretrace is a hypertrace we use a nice algebraic Lemma
(Lemma \plref{l:commCl02aaa}) due to Sylvie Paycha. 
Subsection \plref{ss:AAT} as well as Lemma \plref{l:commCl02aaa} are included
here with her kind permission; her generosity is greatly appreciated.
We emphasize that Lemma \plref{l:commCl02aaa} and Subsection \plref{ss:AAT}
are not needed to prove the classification results about hypertraces contained 
in Theorem \plref{t4}, \plref{t5}, and Theorem \plref{t6}. 

As expected, \eqref{eq:CommutatorRepresentation2} is proved using the symbol
calculus for pseudodifferential operators. Recall that the leading
symbol, $\sigma_a(A)$, of $A\in\CLa$ is a smooth function on $T^*M\setminus M$
which is homogeneous of degree $a$. Now suppose we have $P\in\CL^m(M),
Q\in\CL^{a-m+1}(M)$. Then there is the well--known but crucial identity
\begin{equation} 
 \sigma_a([P,Q])=\frac 1i \{ \sigma_m(P),\sigma_{a-m+1}(Q)\}.
\end{equation}
Here, $\{\cdot,\cdot\}$ denotes the Poisson bracket of functions 
on $T^*M\setminus M$ with respect to the standard symplectic structure.
So Poisson brackets are the symbolic counterpart of commutators and
therefore to solve the original problem one has to analyze the space
spanned by Poisson brackets of homogeneous functions. 
This leads naturally to the \emph{symplectic residue} which is the symbolic
analogue of the residue trace. The theory of the symplectic residue was
developed independently by \textsc{Wodzicki} \cite[Sec.\ 1]{Wod:NRF} and
\textsc{Guillemin} \cite[Sec.\ 6]{Gui:NPW}. 

As in loc. cit. we work in the language of symplectic cones: $Y:=T^*M\setminus
M$ carries a natural free $\R_+^*$--action with quotient $S^*M$, the cosphere
bundle. For an arbitrary connected symplectic cone $Y$ denote by $\cP^a$ the
space of smooth functions which are homogeneous of degree $a$. If $Y$ is
of dimension $2n>2$ with compact base we prove in Theorem \plref{t:HomFuncPoissonBrack} below that
\begin{equation} \label{Intro:PlPmcases}
\begin{split}
\{\cP^{l},\cP^{m}\}&=\ker(\res_Y)\cap\cP^{l+m-1}\\
           &=     \begin{cases} \cP^{l+m-1}, & \textup{if }l+m\not=-n+1,\\ 
                                \ker(\res_Y)\cap\cP^{l+m-1}, & \textup{if }l+m=-n+1.
		                \end{cases}
\end{split}                  
\end{equation}
Here, $\res_Y$ denotes the symplectic residue (Definition \plref{defres1},
Section \plref{ss:SympRes}).

For $m=1$ this is \cite[Thm.\ 6.2]{Gui:NPW}, cf.\ also \cite[1.20]{Wod:NRF}.
The $m$ here corresponds to the $m$ in \eqref{eq:CommutatorRepresentation2}.
Hence, proving \eqref{Intro:PlPmcases} for arbitrary $m$ is crucial.
One could hope that the original method of \cite{Gui:NPW} can be adapted
to all $m$. As shown in \textsc{Neira Jim{\'e}nez} \cite[Sec.\ 1.4]{Nei:CCS} this indeed works
for $(l,m)\not=(0,0)$ but the method fails for the case $l=m=0$.
This was pointed out to the second author by Jean--Marie Lescure.

We therefore offer a completely new approach to the proof of
\eqref{Intro:PlPmcases} which is even more elementary than the proof
in \cite[Sec.\ 6]{Gui:NPW}; the latter uses the elliptic regularity Theorem.

Let us explain the basic idea of our approach: denote by $\go$ the symplectic
form on $Y$. Then $\go^n$ is a volume form. Furthermore, one has
the formula (1.2 in \cite{Wod:NRF})
\begin{equation}\label{Intro:I2} 
\{f,g\}\,\omega^n=d(g\,\iota_{X_f}\omega^n). 
\end{equation} 
Using this formula, an elementary calculation (see the proof of Theorem
\plref{t:HomFuncPoissonBrack}) shows that $f\in\cP^{l+m-1}$ is in
$\{\cP^l,\cP^m\}$ if and only if there is a \emph{homogeneous} differential
form $\beta$ (of homogeneity $n+l+m-1$) such that $f\go^n=d\beta$.

Thus the problem of proving \eqref{Intro:PlPmcases} is reduced to the
calculation of the $2n$-th de Rham cohomology of homogeneous differential
forms. It is no additional effort to calculate the whole homogeneous de Rham
cohomology of a cone: so let $Z$ be a smooth paracompact manifold and let
$\pi:Y\to Z$ be a $\R_+^*$ principal bundle over $Z$ (a cone). Denote by
$\Omega^p\cP^a(Y)$ the smooth $p$--forms which are homogeneous of degree
$a$ (see Subsection \plref{s:CohHomDifForms}). Then it is easy to see
that the exterior derivative preserves the homogeneity and hence
we can form the \emph{homogeneous de Rham cohomology groups} $H^p\cP^a(Y)$.
In Theorem \plref{t:200901071} we show that $H^p\cP^a(Y)$ vanishes for
$a\not=0$ and that for $a=0$ it is canonically isomorphic to $H^{p-1}(Z)\oplus
H^p(Z)$. In particular, for compact oriented $Z$ we find
that $H^{\dim Y}\cP^0(Y)$ is isomorphic to $\C$. The choice of a
homogeneous volume form for $Y$ (e.g.\ $\omega^n$ if $\go$ is the symplectic
form of a symplectic cone $Y$ of dimension $2n$) leads then to a concrete 
isomorphism $\res_Y:H^{\dim Y}\cP^0(Y)\to \C$. This is called the residue
of the cone. 

To finish the outline of the proof of Theorem \plref{t:A} let us explain
the connection between the residue of the cone $T^*M\setminus M$ (aka the
symplectic residue) and the residue trace: so let $M$ be compact connected of dimension $n>1$
and let $\go$ be the standard symplectic form on $T^*M\setminus M$. 
For $A\in\CLa$ the leading symbol $\sigma_a(A)$ is then
an element of $\cP^a(T^*M\setminus M)$. Furthermore, if $a\not=-n$
then the symplectic residue $\res_{\go}(\sigma_a(A))$ vanishes
and if $a=-n$ then $\res_{\go}(\sigma_a(A))$ is up to a normalization equal
to the residue trace $\Res(A)$ (cf.\ e.g.\ \cite[Prop.\ 4.5]{Les:NRP}).
This fact is used in the proof of Theorem \plref{t2} where
\eqref{eq:CommutatorRepresentation2} is deduced inductively from
\eqref{Intro:PlPmcases} using the symbol calculus.

There is another aspect which we would like to comment on. Namely,
it is interesting to note that $\Res$ and $\TR$ as well as the leading
symbol traces have precise analogues on the symbolic level.
This analogy is not only formal but is used in Subsection \plref{ss:AAT}. 

The basic idea is easy to explain, cf. also \cite[Sec.\ 4]{Les:PDO}: 
Let $U\subset \R^n$ be an open subset and let $A\in\CL^a(U)$ with complete
symbol $\sigma\in \CS^a(U\times\R^n)$ ($\CS^a$ denotes the space of classical
symbols of order $a$, see Subsection \plref{s:Symbols}). Then the Schwartz kernel
of $A$ is given by the oscillatory integral (cf.\ Eq.\ \eqref{eq:psido})
\begin{equation}\label{Intor:psido}%
 K_A(x,y)= \int_{\R^n} \, e^{i \langle x-y,\xi \rangle} \, \sigma(x,\xi) \, \dbar
 \xi, \qquad \dbar\xi:=(2\pi)^{-n} d\xi.
\end{equation}
To obtain a trace on $\CL^a(U)$ one hence has to regularize the integral
\begin{equation}
 \int_U K_A(x,x) dx = \int_U \int_{\R^n} \sigma(x,\xi)\, \dbar\xi dx.
\end{equation}
Only the inner integral is problematic and there are two natural regularizations
of the inner integral, the residue and the cut--off integral,
which then lead to $\Res$ and $\TR$ (cf.\ Subsection \ref{ss:SymRes}). Let us
ignore the $x$--dependence and consider the 
H\"ormander symbols $\CS^a(\R^n)\bigl(=\CS^a(\{0\}\times \R^n)\bigr)$.
This is the space of smooth functions $f$ on $\R^n$ such that
$f\sim\sum_{j=0}^\infty f_{a-j}$ with $f_{a-j}(\xi)$ positively homogeneous of order $a-j$
for $\xi$ large enough. 

In view of the fact that the symbolic analogue of commutators are Poisson brackets
and in view of the explanations after Eq.\ \eqref{Intro:I2} the analogue
of a hypertrace is then a linear functional $\tau:\CS^a(\R^n)\to \C$ such that $\tau(f)=0$ 
if the $n$--form $\ga=f d\xi_1\wedge\ldots d\xi_n$ is exact
within forms whose coefficients lie in $\CS^{a+1}(\R^n)$. Now for
$\ga$ to be exact in this sense it is equivalent that $f=\sum_{j=1}^n
\pl_{\xi_j}\sigma_j$ with $\sigma_j\in\CS^{a+1}(\R^n)$. This follows from
an elementary calculation, cf. the proof of Corollary \plref{cor:homres}.

In sum the analogue of a hypertrace is a linear function $\tau$ on
$\CS^a(\R^n)$ such that $\tau(\pl_{\xi_j} f)=0$ for
$j=1,\ldots,n$. Such functionals have been investigated by \textsc{Paycha}
\cite{Pay:NRC} and were partially classified (up to functionals on smoothing
symbols). As explained in e.g.\ \cite[Sec.\ 4.6.3]{Sco:TDP} studying these
functionals is one way to prove the existence of the residue trace; there is
another approach which makes more heavy use of heat trace asymptotics, cf.\
e.g.\ \cite[Sec.\ 4]{Les:NRP}.

Functionals with the \textquotedblleft Stokes property\textquotedblright,
$\tau(\partial_{\xi_j} f)=0$, can
most naturally be classified by looking at a certain variant of de Rham
cohomology. Namely, putting $T(fd\xi_1\wedge\ldots\wedge d\xi_n):=\tau(f)$
one obtains a linear function on the top degree de Rham cohomology of forms
in $\R^n$ whose coefficients lie in $\CS^a(\R^n)$. While the calculation
of this cohomology is possible, it will be postponed to a subsequent paper.
Rather it turns out that the homogeneous cohomology developed in Section
\ref{s:CohHomDifForms} plus a simple Lemma about Schwartz functions  (Lemma
\plref{symbolsumofder}) suffice to classify the functionals with the Stokes
property.

In Proposition \ref{p:SumDeriv} we completely
characterize the functionals on $\CS^a(\R^n)$ with the Stokes property
or equivalently when a function in $\CS^{a-1}(\R^n)$ can be written
as a sum of partial derivatives of functions in $\CS^a(\R^n)$.
This generalizes \cite[Prop.\ 2, Thm.\ 2]{Pay:NRC}.

The paper is organized as follows.
In Section \ref{s:CohHomDifForms} we study homogeneous differential forms on
cones and calculate their de Rham cohomology. As applications
we prove the aforementioned generalization of Guillemin's Theorem on
homogeneous functions and a characterization of functionals with the Stokes
property.

In Section \ref{s:POT} we first review some basic facts about
pseudodifferential operators and trace functionals. We introduce pretraces and hypertraces
and we give some examples. 
In Section \ref{s:OSC} we apply the results of Section \ref{s:CohHomDifForms} and provide
a result about the representation of a classical pseudodifferential 
operator as a sum of commutators. We use this result to give 
the classification of hypertraces and traces on $\CLa$ for different values of $a$. 
For the case of integral $a$ we give two proofs, one relying on a result due
to \textsc{Ponge} \cite{Pon:TPO} and a completely self--contained one in Subsection
\ref{ss:AAT}.

Finally, in Section \ref{s:TracesVectBund} we extend the results about tracial
functionals to operators acting on sections of vector bundles over the
manifold. The main result then is Theorem \plref{t6}.

\section*{Acknowledgments}

This paper exposes and extends some of the results of the Ph.D. Thesis
\cite{Nei:CCS} of the second author.  She would like to thank her adviser Matthias Lesch and
her co--adviser Sylvie Paycha for their guidance during this project, 
as well as the Max--Planck Institute f\"ur
Mathematik and the University of Bonn for their support and hospitality.
We acknowledge with gratitude the substantial help received from Sylvie
Paycha, in particular with Lemma \plref{l:commCl02aaa} 
and Subsection \plref{ss:AAT} which are included in this paper with
her kind permission.
Furthermore, we would like to thank Jean--Marie Lescure for pointing out an
error in an earlier draft. In fact this led us to develop the new approach
via homogeneous cohomology. 
Finally we thank the two anonymous referees for their detailed suggestions
for improvements. We think the paper has benefited considerably from those
remarks.
\section{Cohomology of homogeneous differential forms}\label{s:CohHomDifForms}        

In this section we calculate the de Rham cohomology of homogeneous differential
forms on cones. The theory is stunningly simple. Nevertheless as corollaries we obtain
generalizations of the results of \textsc{Guillemin} \cite{Gui:NPW} on the representation of homogeneous
functions on \emph{symplectic} cones as sums of Poisson brackets. Also our approach generalizes
the theory of homogeneous functions on $\R^n\setminus \{0\}$ in a straightforward way. Therefore,
we also obtain as a corollary the precise criterion when a homogeneous function can be written
as a sum of partial derivatives of homogeneous functions, cf.\ \cite{FGLS:NRM}, \cite{Les:NRP}. 
Finally, this criterion is generalized to classical symbol
functions, generalizing \cite[Prop.\ 2, Thm.\ 2]{Pay:NRC}.

\subsection{Homogeneous differential forms on cones }\label{s:HomDifForms}       

A \emph{cone} over a manifold $B$ is a principal bundle $\pi:Y\to B$ with
structure group $\rpluss$, the multiplicative group of positive real numbers.
Basic examples we have in mind are $\R^n\setminus \{0\}$
(cf.\ Examples \plref{sss:ExampleRn}, \plref{sss:ExampleRnpoly} below) and the cotangent bundle with the zero section
removed, $T^*M\setminus M$, of a compact connected manifold $M$; the latter
is even a symplectic cone and such cones are discussed in detail in Subsection
\plref{ss:SympCones}. In both cases the $\rpluss$ action is given by
multiplication.

Denote by $\varrho_\gl:Y\to Y$, the action of $\gl\in\rpluss$. 
Via $\Phi_t:=\varrho_{e^t}$ we obtain a one parameter group of diffeomorphisms of $Y$. 
Let $\cX\in\cinf{TY}$ be the infinitesimal generator of this group, 
which is sometimes called the \emph{Liouville vector field}.

A differential form $\go\in\Omega^p(Y)$ is called \emph{homogeneous} of degree
$a$ if $\varrho_\gl^*\go=\gl^a\go$ for all $\gl\in\rpluss$. The space of differential forms of form degree
$p$ and homogeneity $a$ is denoted by $\Omega^p\cP^a(Y)$. 
$\cP^a(Y):=\Omega^0\cP^a(Y)$ are the smooth functions on $Y$ which are homogeneous
of degree $a$.

We choose a function $r\in \cP^1(Y)$ which is everywhere positive and 
put $Z:=\bigsetdef{y\in Y}{r(y)=1}$. $\pi_{|Z}$ is a diffeomorphism
from $Z$ onto $B$ and $r$ induces a trivialization of $Y$ as follows:
\begin{equation}
         \Phi:Y \longrightarrow \rpluss\times Z,\quad
              y \ \mapsto (r(y), \varrho_{r(y)\ii}y).
\end{equation}
Note that 
\begin{equation}
      \Phi(\varrho_\gl(y))=(r(\varrho_\gl(y)), \varrho_{r(\varrho_\gl(y))\ii}\varrho_\gl(y))
                          = (\gl r(y),\varrho_{r(y)\ii}y). 
\end{equation}
Hence $\Phi$ intertwines the $\rpluss$ action on $Y$ and the natural $\rpluss$
action on the product $\rpluss\times Z$.
For convenience we will from now on work with the trivialized bundle $\rpluss\times Z$. 
The first coordinate will be called $r$, so the Liouville vector field is then
given by $\livf = r \frac{\pl}{\pl r}$. 

With the projection $\pi:\rpluss\times Z\to Z$, a differential form 
$\go\in\Omega^p\cP^a(\rpluss\times Z)$ can be written 
\begin{equation}\label{eq:125}
      \go=r^{a-1}dr\wedge\pi^*\tau+r^a\pi^*\eta
\end{equation}
with
\begin{equation}\label{eq:126}   
              \eta= i_Z^*\go \in\Omega^p(Z), \quad      \tau= i_Z^*(\iota_\livf \go)\in\Omega^{p-1}(Z), 
\end{equation}
where $i_Z:Z\hookrightarrow Y$ is the inclusion map and $\iota_\livf$ denotes
interior multiplication by the Liouville vector field $\livf$. We have
furthermore
\begin{equation}
 \label{eq:200901071}
        d\go=r^{a-1} dr\wedge(a\pi^*\eta-\pi^*d_Z\tau)+r^a\pi^*d_Z \eta\in\Omega^{p+1}\cP^a(\rpluss\times Z),
\end{equation}
so exterior derivation preserves the homogeneity degree.
Hence we can form the \emph{homogeneous de Rham cohomology groups} 
\begin{equation}\label{eq:100}  
 H^p\cP^a(Y):= \frac{\ker\big(d:\gO^p\cP^a(Y)\longrightarrow \gO^{p+1}\cP^a(Y)\big)}{ \im\big(d:\gO^{p-1}\cP^a(Y)\longrightarrow \gO^{p}\cP^a(Y)\big)}.   
\end{equation}
These cohomology groups can easily be calculated:

\begin{theorem}\label{t:200901071}
Let $Z$ be a smooth paracompact manifold, let $\pi:Y\to Z$ be a $\rpluss$
principal bundle over $Z$. 
\begin{enumerate}
 \item If $a\not=0$ then $H^p\cP^a(Y)=\{0\}$.
 \item If $a=0$ then the map 
  \begin{align*} \Psi:\Omega^\bullet\cP^0(Y)&\longrightarrow
  \Omega^{\bullet-1}(Z)\oplus \Omega^\bullet(Z)\\
  \go &\ \mapsto (\tau,\eta)=\bigl( i_Z^*(\iota_\livf\go),i_Z^* \go\bigr)
  \end{align*}
is an isomorphism of cochain complexes, hence it induces an isomorphism 
\begin{equation}\label{isomHcP}
   H^p\cP^0(Y) \cong H^{p-1}(Z)\oplus H^p(Z).
\end{equation}
In terms of the everywhere positive function $r\in\cP^1(Y)$
the inverse of $\Psi$ is given by 
$(\tau,\eta)\mapsto r^{-1}dr\wedge\pi^*\tau+\pi^*\eta.$
\end{enumerate}
\end{theorem}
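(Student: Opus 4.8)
The plan is to prove the two assertions together by using the explicit formula \eqref{eq:200901071} for the exterior derivative in the trivialization $Y\cong\rpluss\times Z$. The key observation is that under the identification \eqref{eq:125}, a homogeneous $p$--form of degree $a$ corresponds to a pair $(\tau,\eta)\in\Omega^{p-1}(Z)\oplus\Omega^p(Z)$, and by \eqref{eq:200901071} the differential acts by
\[
 d(\tau,\eta) = (a\,\eta - d_Z\tau,\ d_Z\eta).
\]
So the whole problem reduces to computing the cohomology of the complex $\bigl(\Omega^{\bullet-1}(Z)\oplus\Omega^\bullet(Z),\ \delta_a\bigr)$ with $\delta_a(\tau,\eta)=(a\eta-d_Z\tau,\ d_Z\eta)$. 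For $a=0$ this is exactly the (shifted) mapping cone / direct sum of two copies of the de Rham complex of $Z$, one shifted by one, so part (2) is immediate once we check that $\Psi$ is a chain map (which is precisely the content of \eqref{eq:200901071} with $a=0$) and a bijection (which is \eqref{eq:125}--\eqref{eq:126} read as an explicit inverse). The inverse formula $(\tau,\eta)\mapsto r^{-1}dr\wedge\pi^*\tau+\pi^*\eta$ is just \eqref{eq:125} with $a=0$; one verifies directly it is independent of the auxiliary choice of $r$ up to the stated isomorphism, or simply fixes $r$ once and for all.

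For part (1), the point is that when $a\neq 0$ the complex $(\Omega^{\bullet-1}(Z)\oplus\Omega^\bullet(Z),\delta_a)$ is acyclic, and this can be seen by an explicit contracting homotopy. Indeed I would define $h(\tau,\eta):=(0,\ a^{-1}\tau)$ and compute
\[
 (\delta_a h + h\delta_a)(\tau,\eta)
 = \delta_a(0,a^{-1}\tau) + h(a\eta-d_Z\tau,\ d_Z\eta)
 = (a\cdot a^{-1}\tau,\ a^{-1}d_Z\tau) + (0,\ a^{-1}d_Z\eta - a^{-1}d_Z\tau)
 = (\tau,\eta).
\]
Hence the identity is chain-homotopic to zero and all cohomology groups vanish. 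Translating back through the isomorphism $\Omega^\bullet\cP^a(Y)\cong\Omega^{\bullet-1}(Z)\oplus\Omega^\bullet(Z)$ of graded vector spaces (again from \eqref{eq:125}), this gives $H^p\cP^a(Y)=\{0\}$ for $a\neq0$.

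The only genuinely nontrivial input is the formula \eqref{eq:200901071} for $d\go$ in terms of $(\tau,\eta)$, which is a short Cartan-calculus computation using $\mathcal L_{\livf}\go = a\go$ (homogeneity) and $\livf = r\partial_r$; this is already stated in the excerpt, so I may invoke it. Consequently I do not expect a serious obstacle: the argument is purely formal homological algebra once that formula is in hand. The one point requiring a sentence of care is that $\Psi$ in part (2) should be checked to be a \emph{cochain} isomorphism, i.e.\ that it intertwines $d$ on $\Omega^\bullet\cP^0(Y)$ with the total differential of $\Omega^{\bullet-1}(Z)\oplus\Omega^\bullet(Z)$ given by $(\tau,\eta)\mapsto(-d_Z\tau,\ d_Z\eta)$ — which is exactly \eqref{eq:200901071} at $a=0$ — and that it does not depend on the chosen positive $r\in\cP^1(Y)$ beyond the canonical isomorphism class; I would dispatch the latter by noting that any two choices of $r$ differ by multiplication by a positive function on $Z$, which changes $\Phi$ by a bundle automorphism inducing the identity on cohomology.
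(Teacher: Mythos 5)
Your proposal is correct and takes essentially the same route as the paper: the decomposition $\omega\leftrightarrow(\tau,\eta)$ and the formula $\delta_a(\tau,\eta)=(a\eta-d_Z\tau,\,d_Z\eta)$ from \eqref{eq:200901071} are exactly what the paper uses. For part (1) your contracting homotopy $h=a^{-1}\iota_{\cX}$ is the homotopy-algebra reformulation of the paper's Euler identity $d(\iota_{\cX}\go)=a\go$ (for closed $\go$), which in turn is Cartan's formula $\cL_{\cX}=d\iota_{\cX}+\iota_{\cX}d$ combined with $\cL_{\cX}=a\cdot\mathrm{id}$ on $\Omega^\bullet\cP^a$; for part (2) both you and the paper verify the chain-map property and produce the explicit inverse from \eqref{eq:125}. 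One small slip: in your displayed computation the term $h(a\eta-d_Z\tau,\,d_Z\eta)$ should give $(0,\,a^{-1}(a\eta-d_Z\tau))=(0,\,\eta-a^{-1}d_Z\tau)$, not $(0,\,a^{-1}d_Z\eta-a^{-1}d_Z\tau)$ — you wrote $a^{-1}d_Z\eta$ where $a^{-1}\!\cdot a\eta=\eta$ was intended; with that corrected the sum is $(\tau,\eta)$ as claimed.
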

\begin{proof}
(1) As before we work with the trivialized bundle $\rpluss\times Z$.
If $\go$ is closed then \eqref{eq:200901071} implies that
\begin{equation}
     d_Z\tau=a\eta,\quad d_Z\eta=0,
\end{equation}
and hence we obtain a form analogue of \emph{Euler's identity} 
(see Eq.\ \eqref{eq:eulerfunctns} below) 
\begin{equation}\label{eq:Euler}
      d\bigl( i_\livf \go\bigr)= d(r^a\pi^*\tau)
                       = a r^{a-1} dr\wedge\pi^*\tau +r^a \pi^*d_Z\tau
                       =a \go.
\end{equation}
Thus $\go$ is exact if $a\not=0$, explicitly 
\begin{equation}\label{eq:101}  
                 \go = \frac 1a  d\bigl( i_\livf \go\bigr).
\end{equation}

(2) Now let $a=0$ and consider $\omega\in\Omega^p\cP^0(\rpluss\times Z)$. 
Since $\varrho^*_{e^t}\go=\go$, we see that the Lie derivative
$\cL_\livf \go$ vanishes:
\begin{equation}\label{eq:127}  
               \cL_\livf \go = \frac{d}{dt}_{\bigl| t=0} \varrho^*_{e^t}\go
               =0,   
\end{equation}
and Cartan's magic formula
$d\iota_\livf+\iota_\livf d=\cL_\livf$ implies that $d\iota_\livf \go=-\iota_\livf
d\go$. Thus
\begin{equation}\label{eq:128}  
  d\bigl(i_Z^*(\iota_\livf \go),i_Z^*\go\bigr)  =  
  \bigl(-i_Z^*(\iota_\livf d\go),i_Z^*d\go\bigr),
\end{equation}
and hence the exterior derivative on $\gO^{\bullet-1}(Z)\oplus \gO^\bullet(Z)$
can be modified by a sign such that $d\Psi=\Psi d$. From \eqref{eq:125}
and \eqref{eq:126} it follows that $\Psi$ is bijective and that 
its inverse is given by 
$(\tau,\eta)\mapsto r^{-1}dr\wedge\pi^*\tau+\pi^*\eta.$
\end{proof}

\begin{remark}\label{rem:ResConstruction}
We comment on a special case of Theorem \plref{t:200901071} which combines the
constructions of the residue of a homogeneous function on $\R^n\setminus \{0\}$
(see the next Subsection) and of Guillemin's symplectic residue
(Subsection \ref{ss:SympCones}).

Let $\dim Y=n$ and suppose that $\go\in \gO^n\cP^a(Y)$ is a homogeneous
volume form. Then $i_Z^*(\iota_\livf \go)$ is a volume form on $Z$. In particular
$Z$ is orientable and we choose the orientation such that  $i_Z^*(\iota_\livf \go)$ 
is positively oriented. If additionally $Z$ is compact, then integration
yields an isomorphism $H^{n-1}(Z)\cong \C$.

For $f\in\cP^{-a}(Y)$ the closed form $f\go\in\Omega^{n}\cP^0(Y)$ defines a class
$[f\go]\in H^n\cP^0(Y)$ which under the isomorphism $\Psi$ of
Theorem \plref{t:200901071}
corresponds to the class $\bigl[i_Z^* (f\iota_\livf \go )\bigr] \in H^{n-1}(Z)$. 

\begin{dfn}\label{defres1} For $f\in\cP^{-a}(Y)$ we define the \emph{residue} 
with respect to the fixed volume form $\go\in \gO^n\cP^a(Y)$ to be 
the complex number corresponding to the class $[f\go]\in H^n\cP^0(Y)$ under the 
composition of the isomorphisms $H^n\cP^0(Y)\cong H^{n-1}(Z)\cong \C$:
\begin{equation}
\res_\go(f):=\int_{Z}i_Z^*\bigl( f\iota_{\cX}\go \bigr). 
\end{equation}
For $f\in \cP^b(Y), b\not=-a,$ we put $\res_\go(f)=0$.
\end{dfn}
We emphasize that the definition of $\res_\go$ depends on the choice
of the homogeneous volume form $\go$. The significance of Theorem
\plref{t:200901071} lies in the fact that $\res_\go(f)=0$ if and
only if there is a \emph{homogeneous} differential form $\beta$
such that $d\beta=f\go$.
\end{remark}

\subsubsection{Example} $Y=\R^n\setminus \{0\}\cong\rpluss\times S^{n-1},
B=Z=S^{n-1}$.\label{sss:ExampleRn}
We elaborate on this interesting special case.
Denote by $(\xi_1,\ldots,\xi_n)$ the coordinates on $\R^n\setminus \{0\}$ and put 
$\go:=d\xi_1\wedge\cdots\wedge d\xi_n\in\Omega^n\cP^n(\R^n\setminus \{0\})$. Then 
\begin{equation}\label{eq:129}  
\cX=\sum_{i=1}^n\xi_i\dfrac{\partial}{\partial\xi_i}, 
\qquad \iota_{\cX}\go=\sum_{i=1}^n(-1)^{i-1}\xi_i\,d\xi_1\wedge\cdots\wedge \widehat{d\xi_i}\wedge\cdots\wedge d\xi_n. 
\end{equation} 
The form $\iota_{\cX}\go$ is in $\Omega^{n-1}\cP^n(\R^n\setminus \{0\})$, and
$\pullbS(\iota_{\cX}\go)$ 
is the standard volume form on $S^{n-1}$. 
Moreover, by \eqref{eq:Euler} we have for $f\in\cP^{a}(\R^n\setminus \{0\})$
\begin{equation}\label{d(fiXomega)1}
 d(f\iota_{\cX}\go)=(a+n)f\,\go.
\end{equation} 
On the other hand by \eqref{eq:129} 
\begin{equation}
 d\bigl( f\iota_{\cX}\go \bigr)=\sum_{i=1}^n\pl_{\xi_i}(f\xi_i) \, d\xi_1\wedge\cdots\wedge d\xi_n
=\Bigl(\sum_{i=1}^n(\pl_{\xi_i}f) \xi_i + n\, f\Bigr)\go, \label{d(fiXomega)2}
\end{equation}
and thus we arrive at Euler's identity for homogeneous functions:
\begin{equation}\label{eq:eulerfunctns}
 \sum_{i=1}^n(\pl_{\xi_i}f)\xi_i=a\,f.
\end{equation}

\begin{cor}\label{cor:homres}
Let $\res_\go$ be the residue associated to 
$\go=d\xi_1\wedge\cdots\wedge d\xi_n\in \gO^n\cP^n(\R^n\setminus \{0\})$
according to Definition \plref{defres1}.
Then for a homogeneous function $f\in\cP^a(\R^n\setminus \{0\})$  the
following holds:
 \begin{enumerate}
  \item $\res_\go (\pl_{\xi_j} f)=0$.
  \item There exist $\sigma_j\in\cP^{a+1}(\R^n\setminus \{0\})$ 
   such that $f=\sum\limits_{j=1}^n\partial_{\xi_j}\sigma_j$ if and only if $\res_\go(f)=0$.
   Note that $\res_\go(f)\not=0$ only if $a=-n$.
 \end{enumerate}
\end{cor}
\begin{proof}
It follows from Theorem \plref{t:200901071} (cf.\ the remarks before Definition
\ref{defres1}) that for a function 
$g\in \cP^a(\R^n\setminus \{0\})$ the residue vanishes if and only if the class
$[ g\,\go]\in H^n\cP^{a+n}(\R^n\setminus \{0\})$ vanishes. 

To prove (1) we note that $(\pl_{\xi_j} f) d\xi_1\wedge\cdots\wedge\xi_n = d \eta$
with the form 
\[\eta= (-1)^{j-1} f d\xi_1\wedge\cdots\wedge \widehat{d\xi_j}\wedge\cdots\wedge d\xi_n 
    \in \gO^{n-1}\cP^{a+n-1}(\R^n\setminus \{0\})
\]    
and hence $\res_\go(\pl_{\xi_j} f) =0$.

(1) shows that for the $\sigma_j$ in (2) to exist it is necessary that $\res_\go(f)=0$. 
To prove sufficiency consider $f\in \cP^a(\R^n\setminus \{0\})$ with $\res_\go(f)=0$. 
Then there is $\eta\in \gO^{n-1}\cP^{a+n}(\R^n\setminus \{0\})$ with 
$d\eta=f\go$. We write
\begin{equation}\label{eq:EtaForm}
     \eta=\sum_{j=1}^n (-1)^{j-1} \sigma_j \,
     d\xi_1\wedge\cdots\wedge \widehat{d\xi_j}\wedge\cdots\wedge d\xi_n 
    \end{equation}
with $\sigma_j\in\cP^{a+1}(\R^n\setminus \{0\})$. 
Then $f=\sum\limits_{j=1}^n \partial_{\xi_j}\sigma_j$.
\end{proof}



\subsection{Extension to $\log$--polyhomogeneous forms}             
\label{s:LogPolforms}

\newcommand{\homforms}[3]{\Omega^#1\cP^{#2,#3}(\rpluss\times Z)}

We generalize our previous considerations to $\log$--polyhomogeneous forms.

A $p$--form $\go\in\Omega^p(\rpluss\times Z)$ is called \emph{$\log$--polyhomogeneous}
of degree $(a,k)$ if 
\begin{equation}
      \go=\sum_{j=0}^k \go_j\;\log^j r,
\end{equation}
with $\go_j\in\Omega^p\cP^a(\rpluss\times Z)$, cf.\ \cite{Les:NRP}.
The set of all such forms is denoted by $\homforms{p}{a}{k}$.

The exterior derivative preserves the $(a,k)$--degree. More explicitly,
\begin{multline}
      d\Bigl(\bigl(r^{a-1} dr\wedge\pi^*\tau+ r^a\pi^*\eta\bigr)\; \log^j r \Bigr) \\
             = \bigl( r^{a-1} dr\wedge (a\pi^*\eta-\pi^*d_Z\tau)+r^a \pi^*d_Z\eta\bigr)\;\log^j r 
             + j r^{a-1} dr\wedge \pi^*\eta\; \log^{j-1} r.
\end{multline}

Hence analogously to Eq.\ \eqref{eq:100} we define the \emph{$\log$--homogeneous de Rham
cohomology groups}
\begin{equation}\label{eq:102}  
 H^p\cP^{a,k}(Y):= \frac{\ker\big(d:\gO^p\cP^{a,k}(Y)\longrightarrow \gO^{p+1}\cP^{a,k}(Y)\big)}{ \im\big(d:\gO^{p-1}\cP^{a,k}(Y)\longrightarrow \gO^{p}\cP^{a,k}(Y)\big)},
\end{equation}
for which we can prove the following analogue of Theorem \plref{t:200901071}:

\begin{theorem}\label{t:200901072}
Let $Z$ be a smooth paracompact manifold, let $\pi:Y\to Z$ be a $\rpluss$
principal bundle over $Z$. Let $r\in\cP^1(Y)$ be everywhere positive. 
\begin{enumerate}
 \item If $a\not=0$ then  $H^p\cP^{a,k}(Y)=\{0\}$.
 \item If $a=0$ then the map 
  \begin{align*} \Phi^{k}:\Omega^{\bullet-1}(Z)\oplus
  \Omega^\bullet(Z)&\longrightarrow \Omega^\bullet\cP^{0,k}(Y)\\
(\tau,\eta)&\ \mapsto r\ii dr\wedge (\pi^*\tau)  \log^k r\;+\pi^*\eta
\end{align*} 
induces an isomorphism
\begin{equation}
 H^p(\Phi^{k}):H^{p-1}(Z)\oplus H^p(Z)\cong H^p\cP^{0,k}(Y).
\end{equation} 
Let $I^{p,k}: H^p\cP^{0,k}(Y)\to H^{p-1}(Z)\oplus H^p(Z)$ be the inverse
of $H^p(\Phi^k)$. Then for a closed form
$\go=\sum\limits_{j=0}^k \go_j \log^j r\in \gO^p\cP^{0,k}(Y),
\go_j\in\gO^p\cP^0(Y),$
one has $I^{p,k}([\go])=\bigl([i_Z^*  (\iota_\livf \go_k)],[i_Z^* \go_0]\bigr)$.
\end{enumerate}
\end{theorem}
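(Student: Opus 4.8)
The plan is to reduce Theorem \ref{t:200901072} to the already-established Theorem \ref{t:200901071} by an inductive argument on the log-degree $k$, exploiting the fact that the operator ``multiply by $\log r$ and extract'' relates the $\log$--polyhomogeneous complex to the purely homogeneous one. For part (1), the cleanest route is to argue directly: if $\go=\sum_{j=0}^k\go_j\log^j r$ is closed with $\go_j\in\Omega^p\cP^a(Y)$ and $a\ne0$, I would reuse the homogeneous primitive $\tfrac1a\,\iota_\livf$. Indeed, since $\iota_\livf$ commutes with ``$\log^j r$'' up to no correction terms (because $\iota_\livf(\log r)=0$ as $\log r$ is a function and $\iota_\livf$ lowers form degree), and since the explicit derivative formula in the excerpt shows $d$ acts on each $\log^j r$--component essentially as in the homogeneous case plus a lower-order term in $\log$, one expects $\go-\tfrac1a d(\iota_\livf\go)$ to be a closed $\log$--polyhomogeneous form of strictly smaller log-degree. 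Then induction on $k$ (base case $k=0$ being Theorem \ref{t:200901071}(1)) finishes it. I should double-check the bookkeeping: write $\beta:=\tfrac1a\iota_\livf\go=\sum_j \tfrac1a(\iota_\livf\go_j)\log^j r$, compute $d\beta$ using the multi-line derivative formula, and verify that the $\log^k r$--component of $\go-d\beta$ vanishes by Euler's identity \eqref{eq:Euler} applied to $\go_k$.

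For part (2), with $a=0$, the strategy is to show $H^p(\Phi^k)$ is an isomorphism by constructing the stated inverse $I^{p,k}$ on cohomology and checking both composites are the identity. First I would verify $\Phi^k$ is a chain map up to the usual sign modification: applying the derivative formula to $r^{-1}dr\wedge(\pi^*\tau)\log^k r+\pi^*\eta$ produces $-r^{-1}dr\wedge(\pi^*d_Z\tau)\log^k r + \pi^*d_Z\eta + k\,r^{-1}dr\wedge(\pi^*\tau)\log^{k-1}r$; the first two terms match $\Phi^k$ applied to $(d_Z\tau,d_Z\eta)$ (after the sign twist, exactly as in the proof of Theorem \ref{t:200901071}), so I must check that the leftover term $k\,r^{-1}dr\wedge(\pi^*\tau)\log^{k-1}r$ is itself exact, or rather that it does not obstruct passage to cohomology. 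This is where the induction enters again: that leftover term is $H^p(\Phi^{k-1})$-type data, and by the inductive hypothesis (or by part (1) when the relevant homogeneity is nonzero) its class is controlled. The honest way is: for $\go$ closed of $\log$--degree $k$, its top component $\go_k$ is automatically closed and homogeneous of degree $0$, so $[\go_k]\in H^p\cP^0(Y)$ corresponds via $\Psi$ to $\bigl([i_Z^*\iota_\livf\go_k],[i_Z^*\go_k]\bigr)$; one shows $[i_Z^*\go_k]=0$ (because $d_Z(i_Z^*\iota_\livf\go_k)=0\cdot i_Z^*\go_k=0$ forces nothing directly, but the exactness of $k\,r^{-1}dr\wedge\pi^*\eta_k$-type terms does), and then subtracts $\Phi^k([i_Z^*\iota_\livf\go_k],0)$ to lower $k$.

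The cleanest organization is probably: (i) prove the short exact sequence of complexes $0\to \Omega^\bullet\cP^{0,k-1}(Y)\xrightarrow{\cdot\log r}\Omega^\bullet\cP^{0,k}(Y)\xrightarrow{\ \mathrm{ev}\ }\Omega^\bullet\cP^0(Y)\to0$, where $\mathrm{ev}$ extracts the $\log^0 r$--coefficient after quotienting — actually the natural surjection is ``take the top coefficient $\go_k$'' with kernel $\log r\cdot\Omega^\bullet\cP^{0,k-1}(Y)$; (ii) feed this into the long exact cohomology sequence; (iii) use Theorem \ref{t:200901071} for the base term and induction for the kernel term; (iv) identify the connecting maps, which by the derivative formula are multiplication by the ``$a$-factor'' $0$ in the relevant spots, so they vanish and the sequence splits, giving $H^p\cP^{0,k}(Y)\cong \bigl(H^{p-1}(Z)\oplus H^p(Z)\bigr)^{\oplus?}$ — but one must be careful, the answer is that it is \emph{not} a direct sum over all $j$, it stays $H^{p-1}(Z)\oplus H^p(Z)$, which means the connecting maps are in fact isomorphisms on the nose except in one slot. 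Sorting out exactly which connecting homomorphisms vanish and which are isomorphisms is the main obstacle, and it is dictated entirely by the extra term $j\,r^{a-1}dr\wedge\pi^*\eta\,\log^{j-1}r$ in the derivative formula: this term feeds the $\eta$-part of level $j$ into the $\tau$-part of level $j-1$. I expect that tracking this ``shift'' carefully is the only real content, and that once the diagram is drawn the stated formula $I^{p,k}([\go])=\bigl([i_Z^*\iota_\livf\go_k],[i_Z^*\go_0]\bigr)$ drops out: the $\tau$-component sees only the top log-power (where the homogeneous part survives) and the $\eta$-component sees only the bottom log-power (where no further derivative of $\log$ can kill it).

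One cautionary remark I would include: the inverse map $\Phi^k$ as written lands a pair $(\tau,\eta)$ with $\tau$ attached to $\log^k r$ and $\eta$ attached to $\log^0 r$, which is asymmetric, and this asymmetry is exactly what makes the induction terminate — at each step one strips off either the top $\log$-power from the $dr$-part or the bottom $\log$-power from the base-part. I would also note that everything is natural in $Z$, and that for $Z$ compact oriented this specializes, just as Remark \ref{rem:ResConstruction}, to give the $\log$-residue used later; but that specialization is not needed for the theorem itself. With the long-exact-sequence framework in place, the proof is a diagram chase of a couple of lines plus the derivative formula already displayed, so I would not belabor the routine verifications.
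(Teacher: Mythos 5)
Your part (1) is exactly the paper's argument: subtract $\tfrac1a d(\iota_\cX\go)$ to lower the $\log$-degree and induct, with base case Theorem \ref{t:200901071}. That part is fine.

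Your part (2) contains a concrete miscalculation that sends the rest off course. You claim that $d\bigl(r^{-1}dr\wedge(\pi^*\tau)\log^k r+\pi^*\eta\bigr)$ has the extra summand $k\,r^{-1}dr\wedge(\pi^*\tau)\log^{k-1}r$. It does not: differentiating $\log^k r$ produces a factor $r^{-1}dr$, which is then wedged against the $r^{-1}dr$ already present in the $\tau$-slot and dies ($dr\wedge dr=0$). The term $j\,r^{a-1}dr\wedge\pi^*\eta\,\log^{j-1}r$ in the paper's displayed formula has $\eta$, not $\tau$, and in $\Phi^k$ the $\eta$-slot is attached to $\log^0 r$, so that term never appears either. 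Hence $\Phi^k$ is a chain map on the nose (after the same sign twist as in the $k=0$ case), and the ``leftover term'' you spend the next paragraph trying to control is a phantom. The paper simply computes the closedness conditions $d_Z\tau_k=0$, $d_Z\eta_k=0$, $d_Z\eta_j=0$, $d_Z\tau_j=(j+1)\eta_{j+1}$ for $j<k$ directly, and reads off from them that $H^p(\Phi^k)$ and $I^{p,k}$ are well-defined mutual inverses; no induction or long exact sequence is needed.

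Two further problems in the long-exact-sequence variant you then outline. First, multiplication by $\log r$ is not a chain map ($d(\log r\cdot\chi)=r^{-1}dr\wedge\chi+\log r\,d\chi$ picks up a cross term), and the kernel of ``take the top coefficient $\go_k$'' is the natural inclusion of $\gO^\bullet\cP^{0,k-1}(Y)$, not $\log r\cdot\gO^\bullet\cP^{0,k-1}(Y)$; your short exact sequence conflates these two inequivalent maps. The correct sequence is $0\to\gO^\bullet\cP^{0,k-1}(Y)\hookrightarrow\gO^\bullet\cP^{0,k}(Y)\xrightarrow{\,\go\mapsto\go_k\,}\gO^\bullet\cP^0(Y)\to0$, whose connecting homomorphism you would then still have to identify explicitly (it sends $[(\tau_k,\eta_k)]$ to $k$ times the $\tau$-slot populated by $[\eta_k]$), and this is real work that your write-up waves away. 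Second, your final reduction ``subtract $\Phi^k([i_Z^*\iota_\cX\go_k],0)$ to lower $k$'' does not lower the log-degree: after the subtraction the $\log^k r$-coefficient is $\pi^*\eta_k$, which vanishes only in cohomology, not as a form; one must additionally subtract the exact form $\tfrac1k\,d(\pi^*\tau_{k-1}\log^k r)$ to absorb it. So the proposal, while pointing at the right phenomena, would not survive being carried out in the form stated, and in any case it is substantially more involved than the paper's direct verification.
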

\begin{proof}
We consider a \emph{closed} form $\omega\in\homforms{p}{a}{k}$ and write
\begin{equation}
    \go=\go_k \;\log^k r+\chi
\end{equation}
with $\chi\in\homforms{p}{a}{k-1}$. Then
\begin{equation}
    0=d\go=(d\go_k)\;\log^k r+\text{ lower log degree},
\end{equation}
thus $\go_k$ is closed and Euler's identity \eqref{eq:Euler}
gives
\begin{equation}
\begin{split}
      d\bigl( \iota_\livf \go\bigr) &=d\bigl( \iota_\livf \go_k\; \log^k r \bigr) + \text{ lower log degree}\\
      &= a\, \go_k\;\log^k r  + \text{ lower log degree }\\
      &= a\, \go + \text{ lower log degree}.
\end{split}
\end{equation}
If $a\not=0$ then $\go$ is cohomologous to $\go-\frac 1a d\bigl( i_\livf \go\bigr)
\in \homforms{p}{a}{k-1}$. By induction and Theorem \ref{t:200901071} one then shows
that $\go$ is exact. 

Next let $a=0$ and consider
a form $\go\in\homforms{p}{0}{k}$:
\begin{equation}
     \go=\sum_{j=0}^k\bigl( r\ii dr\wedge\pi^*\tau_j +\pi^*\eta_j\bigr) \;\log^j r,
\end{equation}
\begin{equation}
\begin{split}
      d\go&=\sum_{j=0}^k \bigl( -r\ii dr\wedge\pi^*d_Z\tau_j+\pi^*d_Z \eta_j\bigr) \;\log^j r + j  r\ii dr\wedge\pi^*\eta_j\; \log^{j-1} r\\
       &= \bigl( -r\ii dr\wedge\pi^*d_Z\tau_k+\pi^*d_Z\eta_k\bigr)\;\log^k r\\
         &\quad +\sum_{j=0}^{k-1}\bigl(r\ii dr\wedge((j+1)\pi^*\eta_{j+1}- \pi^*d_Z\tau_j) +\pi^*d_Z\eta_j\bigr)\;\log^j r.
\end{split}
\end{equation}
Thus $d\go=0$ if and only if
\begin{equation}
\begin{split}
        &d_Z \tau_k=0,\quad d_Z\eta_k=0,\\
        &d_Z \eta_j=0,\quad d_Z \tau_j=(j+1)\eta_{j+1},\quad j=0,...,k-1. 
\end{split}
\end{equation}
This implies that $H^p(\Phi^{k})$ and $I^{p,k}$ are well--defined and it is a
routine
matter to check that they are inverses of each other.
\end{proof}



\subsubsection{Example} $Y=\R^n\setminus \{0\},
B=Z=S^{n-1}$.\label{sss:ExampleRnpoly}
As in the homogeneous case we put:

\begin{dfn} Let $f\in\cP^{-n,k}(\R^n\setminus \{0\})$. We define the \emph{residue} of $f$ to be the integral 
\begin{equation}
\res_{\go,k}(f):=\res_\go(f_k)=\int_{S^{n-1}}\pullbS \bigl(
f_k\,\iota_{\cX}\go\bigr), \quad \go=d\xi_1\wedge\cdots\wedge d\xi_n. 
\end{equation}
\end{dfn} 

Note that by Theorem \ref{t:200901072}, $H^n\cP^{0,k}(\R^n\setminus \{0\})\cong H^{n-1}(S^{n-1})\cong \C$,
and that $\res_{\go,k}(f)$ is the image in $\C$ of the class $[f\go]$ under this isomorphism.
Therefore exactly as Corollary \ref{cor:homres} one now proves:
\begin{cor}\label{cor:loghomres}
 For a $\log$--polyhomogeneous function $f\in\cP^{a,k}(\R^n\setminus \{0\})$
 the following holds:
 \begin{enumerate}
  \item $\res_{\go,k}(\pl_{\xi_j} f)=0$.
  \item There exist $\sigma_j\in\cP^{a+1,k}(\R^n\setminus \{0\})$ such that 
  $f=\sum\limits_{j=1}^n\partial_{\xi_j}\sigma_j$ if and only if $\res_{\go,k}(f)=0$.
  Note that $\res_{\go,k}(f)\not=0$ only if $a=-n$.
 \end{enumerate}
\end{cor}


\subsection{Homogeneous functions on symplectic cones}\label{ss:SympCones}                


In this section we give an explicit expression of a homogeneous function in terms of Poisson brackets.
This generalizes work of \textsc{Guillemin} \cite[Thm.\ 6.2]{Gui:NPW}.

To fix some notation and to fix some (sign) conventions let us briefly collect some basic
facts from symplectic geometry:

Let $Y$ be a symplectic manifold with symplectic form $\go$. The Hamiltonian vector field $X_f$ associated to $f\in \cinf{Y}$ is characterized by $\iota_{X_f}\omega=-df$. 
The Poisson bracket of two functions $f,g\in \cinf{Y}$ is defined by 
\[\{f,g\}:=\omega(X_f,X_g).\] 
If $X_1$ and $X_2$ are Hamiltonian vector fields, then $[X_1,X_2]$ is also a Hamiltonian vector field with 
Hamiltonian function 
$\omega(X_1,X_2)$ (see Def.\ 18.5 in \textsc{Cannas da Silva} \cite{CdS:SG}): 
\begin{equation*}\iota_{[X_1,X_2]}\omega=\iota_{X_{\omega(X_1,X_2)}}\omega,\end{equation*} 
hence 
\begin{equation}\label{vectfieldcomm}X_{\{f,g\}}=X_{\omega(X_f,X_g)}=[X_f,X_g],\end{equation} 
and $(\cinf{Y},\{,\})$ is a Poisson algebra. 

\begin{prop}[1.2 in \cite{Wod:NRF}]\label{p:Wod} The Poisson bracket of 
two functions $f,g\in \cinf{Y}$ satisfies: 
\begin{equation}\label{I2} 
\{f,g\}\,\omega^n=n\,df\wedge dg\wedge\omega^{n-1}=d(g\,\iota_{X_f}\omega^n). 
\end{equation} 
\end{prop} 

Let $Y$ be a \emph{symplectic cone}, i.e.\ a cone $\pi:Y\to Z$ with a symplectic form $\go\in \gO^2\cP^1(Y)$. 
We assume furthermore that $Z$ is compact and connected; of course, $Y$ is then connected, too. 
The main example we have in mind is the cotangent bundle 
with the zero section removed, $T^*M\setminus{M}$, of a compact connected
manifold $M$ of dimension $\dim M > 1$,
with its standard symplectic structure.
The base manifold $Z$ is then the cosphere bundle $S^*M$. In the case $M=S^1$ (the only compact connected one--dimensional
manifold!), each of the two connected components of $T^*S^1\setminus S^1$ is a symplectic cone
over $S^1$.




\subsubsection{The symplectic residue}\label{ss:SympRes}

Let $\dim Y=:2n$, so $\go^n\in\Omega^{2n}\cP^n(Y)$ is a homogeneous volume form on $Y$.
We can apply Definition \ref{defres1} and define the \emph{symplectic residue} of a 
function $f\in\cP^a(Y)$ to be the residue
with respect to the volume form $\go^n$. That is 
\begin{equation}
\res_Y(f):=\res_{\go^n}(f)=\begin{cases} \int_Z i_Z^* (f \iota_\livf \go^n),  & \text{if }a=-n,\\
                              0,                           & \text{if }a\not=-n.
			      \end{cases}
\end{equation}
Recall that the definition of $\res_Y$ depends on the choice of the
homogeneous volume form $\go^n$. Furthermore, recall that by Theorem
\plref{t:200901071}
$\res_Y(f)=0$ if and only if there is a form
$\beta\in \gO^{2n-1}\cP^{a+n}(Y)$
such that $d\beta=f\go^n$. 

We note in passing that 
the form $\alpha:=\iota_{\cX}\omega$ is in $\Omega^1\cP^1(Y)$ and by Euler's identity for forms
Eq.\ \eqref{eq:Euler}, it satisfies $\omega=d\alpha$.
Our definition of the symplectic residue differs from the original one by
\textsc{Guillemin} \cite{Gui:NPW}
by a factor.

\subsubsection{Homogeneous functions in terms of Poisson brackets}
\label{sss:HomFuncPoisson}

Now we prove the following generalization of \cite[Thm.\ 6.2]{Gui:NPW}. 

In the following we will for brevity write $\cP^a$ instead of $\cP^a(Y).$
\begin{theorem}\label{t:HomFuncPoissonBrack}
 Let $Y$ be a connected symplectic cone of dimension $2n>2$ with compact base.
Then for any real numbers $l, m$ the following holds
\begin{equation} \label{PlPmcases}
\begin{split}
\{\cP^{l},\cP^{m}\}&=\ker(\res_Y)\cap\cP^{l+m-1}\\
           &=     \begin{cases} \cP^{l+m-1}, & \textup{if }l+m\not=-n+1,\\ 
                                \ker(\res_Y)\cap\cP^{l+m-1}, & \textup{if }l+m=-n+1.
		                \end{cases}
\end{split}                  
\end{equation}
\end{theorem}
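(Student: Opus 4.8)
The plan is to reduce the statement about Poisson brackets to the homogeneous de Rham cohomology computation of Theorem \ref{t:200901071}, using the key formula \eqref{I2} of Proposition \ref{p:Wod}. First I would establish the inclusion $\{\cP^l,\cP^m\}\subset\ker(\res_Y)\cap\cP^{l+m-1}$. The degree count is immediate: if $f\in\cP^l$ and $g\in\cP^m$, then $X_f\in\cinf{TY}$ is homogeneous of degree $l-1$ (since $\go\in\gO^2\cP^1(Y)$ and $\iota_{X_f}\go=-df\in\gO^1\cP^l$), hence $\{f,g\}=\go(X_f,X_g)\in\cP^{l+m-1}$. For the residue vanishing, apply \eqref{I2}: $\{f,g\}\,\go^n=d(g\,\iota_{X_f}\go^n)$, and $g\,\iota_{X_f}\go^n$ is a homogeneous $(2n-1)$--form of degree $m+(l-1)+n=l+m-1+n$. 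So $\{f,g\}\,\go^n$ is exact within homogeneous forms, which by the remark following Definition \ref{defres1} (or by the displayed characterization after the definition of $\res_Y$) means exactly $\res_Y(\{f,g\})=0$.

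\textbf{The converse inclusion} is the main point. Suppose $f\in\cP^{l+m-1}$ with $\res_Y(f)=0$. By Theorem \ref{t:200901071} applied to the homogeneous volume form $\go^n\in\gO^{2n}\cP^n(Y)$ — note $\dim Y=2n>2$ and $Z$ is compact, hence $H^{2n}\cP^0(Y)\cong H^{2n-1}(Z)\cong\C$ — the condition $\res_Y(f)=0$ is equivalent to the existence of a homogeneous form $\beta\in\gO^{2n-1}\cP^{l+m-1+n}(Y)$ with $d\beta=f\go^n$. Now I need to recognize such a $\beta$ as $g\,\iota_{X_f}\go^n$ for suitable homogeneous $g$ — but this is not quite the right move; instead the cleaner route is: the map $\cP^m\to\gO^{2n-1}\cP^{l+m-1+n}(Y)$, $g\mapsto g\,\iota_{X_h}\go^n$, for a fixed $h\in\cP^l$ with $X_h$ nonvanishing, has image whose exterior derivatives are $\{h,g\}\go^n$. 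So the real task is to show that every homogeneous exact $2n$--form $f\go^n$ (with $f$ of the right degree) can be written as $d(g\,\iota_{X_h}\go^n)$ for some $h\in\cP^l$, $g\in\cP^m$. I would proceed by choosing $h$ appropriately — e.g. a homogeneous function of degree $l$ whose Hamiltonian flow is nice, or by a partition-of-unity / local argument reducing to Darboux-type coordinates on the cone — and then solving $\{h,g\}=f$ for $g$, which is a first-order linear PDE along the $X_h$--orbits. The homogeneity degrees must be tracked so that $g$ comes out in $\cP^m$; the only obstruction to global solvability along closed orbits is precisely the integral of $f$ over those orbits, and organizing these into the single residue class is where Theorem \ref{t:200901071} does the work.

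\textbf{The second equality} in \eqref{PlPmcases} is then bookkeeping: $\res_Y$ is identically zero on $\cP^b$ for $b\neq-n$, so $\ker(\res_Y)\cap\cP^{l+m-1}=\cP^{l+m-1}$ unless $l+m-1=-n$, i.e. $l+m=-n+1$, in which case it is the proper subspace $\ker(\res_Y)\cap\cP^{-n}$.

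\textbf{Main obstacle.} I expect the genuinely delicate step to be the surjectivity direction — producing the factorization $f\go^n=d(g\,\iota_{X_h}\go^n)$ with both $h$ and $g$ \emph{homogeneous of the prescribed degrees} $l$ and $m$ simultaneously. The cohomological input (Theorem \ref{t:200901071}) gives a homogeneous primitive $\beta$ of degree $n+l+m-1$, but massaging $\beta$ into the special form $g\,\iota_{X_h}\go^n$ requires either an explicit Hamiltonian $h$ together with solving a transport equation with controlled homogeneity, or an argument that the subspace $\{(\text{such }\beta)\}$ already exhausts all homogeneous primitives modulo closed forms. I anticipate the authors exploit the formula $\go=d\alpha$ with $\alpha=\iota_{\cX}\go\in\gO^1\cP^1(Y)$ (noted just before the theorem) to build $h$ and $g$ out of $\alpha$ and $\beta$ directly via an elementary manipulation with $\iota_{X_f}\go^n$ and Euler's identity \eqref{eq:Euler}, thereby sidestepping any PDE; that is the calculation I would look to carry out rather than the transport-equation approach sketched above.
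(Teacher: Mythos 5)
Your outline is correct up through the cohomological reduction: the forward inclusion via Proposition \ref{p:Wod} and the passage from $\res_Y(f)=0$ to the existence of a homogeneous primitive $\beta\in\gO^{2n-1}\cP^{n+l+m-1}(Y)$ with $d\beta=f\go^n$ (Theorem \ref{t:200901071}) both match the paper, and the degree bookkeeping for the second equality is fine. The gap is precisely where you flag it. You propose to realize $f\go^n$ as $d(g\,\iota_{X_h}\go^n)$ for a \emph{single} fixed $h\in\cP^l$, which already cannot work: the map $g\mapsto g\,\iota_{X_h}\go^n$ has image a rank-one subbundle of $\Lambda^{2n-1}T^*Y$, so it cannot produce an arbitrary homogeneous primitive $\beta$, and the fallback — solving the transport equation $\{h,g\}=f$ along $X_h$--orbits — runs into exactly the closed-orbit obstructions and homogeneity-control problems you worry about, and is not how the proof goes.

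The idea you are missing is to use a \emph{spanning family} rather than a single Hamiltonian and then do pointwise linear algebra rather than any PDE. Assuming (WLOG) $l\neq 0$, choose $g_1,\ldots,g_N\in\cP^l$ whose differentials span $T^*_yY$ at every $y$ (possible because $l\neq 0$). Since $\go^n$ is a volume form, the $(2n-1)$--forms $\iota_{X_{g_j}}\go^n$ then span $\Lambda^{2n-1}T^*_yY$ at every point, so the given homogeneous primitive can be written $\beta=\sum_j f_j\,\iota_{X_{g_j}}\go^n$, and a degree count forces $f_j\in\cP^m$. Applying Proposition \ref{p:Wod} termwise gives $f\go^n=d\beta=\sum_j\{g_j,f_j\}\go^n$, hence $f=\sum_j\{g_j,f_j\}$. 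No transport equation is solved; the only analytic input is the homogeneous Poincar\'e lemma already packaged in Theorem \ref{t:200901071}. Your guess that $\alpha=\iota_\cX\go$ and Euler's identity \eqref{eq:Euler} enter the argument is correct, but only for the exceptional case $l=m=0$: there one cannot choose degree-$0$ functions whose differentials span $T^*_yY$ (they only span the pullback of $T^*Z$), so the paper instead writes $f\go^n=\frac{n}{n-1}\,d(f\alpha\wedge\go^{n-1})$ via Euler's identity, notes that $f\alpha$ is a degree-$0$ one-form pulled back from $Z$, expands $f\alpha=\sum_i f_i\,dg_i$ over a family $g_i\in\cP^0$ spanning $T^*_zZ$, and then the same computation closes. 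So the two cases require two distinct spanning constructions, and neither is a single-Hamiltonian transport argument.
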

\begin{remark}
The proof we present is based on the homogeneous cohomology developed in
Subsection \plref{s:HomDifForms}, in particular Theorem \plref{t:200901071}. 
While \cite{Gui:NPW} uses the elliptic regularity Theorem, 
our Theorem \plref{t:200901071} is completely elementary. 
More importantly our result is more general than loc.\ cit.\ where $m=1$ is
assumed. The technique of \cite[Sec.\ 6]{Gui:NPW} can be applied to prove
Theorem \plref{t:HomFuncPoissonBrack} for $(l,m)\not=(0,0)$ but the method 
fails\footnote{The second author would like to thank Jean--Marie Lescure for pointing this out to her.} 
 for the case $l=m=0$; for details see \cite[Sec.\ 1.4]{Nei:CCS}.
\end{remark}
\begin{proof} We first note that Proposition \ref{p:Wod} implies that $\{\cP^l,\cP^m\}\subset \cP^{l+m-1}$.
Furthermore, by loc.\ cit.\ we have $\{f,g\}\, \go^n = d(g\,\iota_{X_f}\omega^n)$, and if 
$f\in\cP^l, g\in \cP^m$ then $g\,\iota_{X_f}\omega^n \in \gO^{2n-1}\cP^{l+m+n-1}$.
Thus the homogeneous cohomology class of $\{f,g\} \go^n$ vanishes and hence 
$\res_Y(\{f,g\})=0$. So $\{\cP^{l},\cP^{m}\}\subset \ker(\res_Y)$.

Conversely, let $f\in\cP^{l+m-1}$ be given with $\res_Y(f)=0$. Then by Theorem
\plref{t:200901071} (see also Definition \plref{defres1}), the homogeneous
cohomology class of $f\go^n\in\gO^{2n}\cP^{n+l+m-1}$ vanishes and hence there is
a $\beta\in\gO^{2n-1}\cP^{n+l+m-1}$ such that
\begin{equation}\label{eq:103}  
                f \go^n = d\beta.    
\end{equation}

\subsubsection*{1. $l\not=0$ or $m\not=0$} Since the claim is symmetric in $l$ and $m$ 
we may, without loss of generality, assume that $l\not=0$. 

Choose functions $g_1,\ldots,g_N \in \cP^{l}$ such that at every point $y$ of $Y$
their differentials $dg_1|_y,\ldots,dg_N|_y$ span the cotangent space $T^*_yY$. Let
$X_1,\ldots,X_N$ be the Hamiltonian vector fields of $g_1,\ldots,g_N$.
Since $\go^n$ is a volume form also $\iota_{X_1}\go^n|_y,\ldots, \iota_{X_N}\go^n|_y$
span $\Lambda^{2n-1}T^*_yY$.

Consequently, there are functions $f_1,\ldots, f_N\in \cinf{Y}$ such that
\begin{equation}\label{eq:104}  
 \beta = \sum_{j=1}^N f_j\, \iota_{X_j}\go^n.
\end{equation}
Since $\beta, X_j, \go^n$ are homogeneous it is clear that also $f_j$ can be chosen to
be homogeneous. Counting degrees then shows $f_j\in \cP^m$.
Thus by Proposition \ref{p:Wod}
\begin{equation}\label{eq:105}  
 \begin{split}
       f \,  \go^n & = d\beta = \sum_{j=1}^N d( f_j\, \iota_{X_j} \go^n ) \\
	       & = n \sum_{j=1}^N dg_j\wedge df_j \wedge \go^{n-1} = \sum_{j=1}^N \{g_j, f_j\}\, \go^n,
 \end{split}
\end{equation}
and hence $f=\sum\limits_{j=1}^N \{g_j,f_j\}\in\{\cP^{l},\cP^{m}\}$.

\subsubsection*{2. $l=m=0$} In this case $f\in\cP^{-1}$. By assumption, $n>1$ and thus by 
Eq.\ \eqref{eq:Euler}
\begin{equation}\label{eq:106}  
 f\, \go^n = \frac{1}{n-1} d ( f\iota_\livf \go^n)= \frac{n}{n-1} d (f \ga
 \wedge \go^{n-1}), \quad \ga=\iota_\livf\go.
\end{equation}
The $1$--form $f\ga$ is homogeneous of degree $0$ and since $\ga=\iota_\livf\go$, it is
the pullback of a $1$--form on $Z$. 

We now choose $g_1,\ldots,g_N\in\cP^0$ such that at every point $z$ of $Z$, their differentials
span the cotangent space $T^*_zZ$. Of course it is impossible to find homogeneous functions
of degree $0$ such that their differentials span $T^*_yY$ at every $y\in Y$.

Therefore there are functions $f_1,\ldots,f_N\in\cinf{Y}$ such that
\[f \alpha=\sum_{i=1}^N f_i \,dg_i.\]  
As before, we see that $f_i$ can be chosen such that $f_i\in\cP^0$. 
Moreover, continuing Eq.\ \eqref{eq:106} and again using Proposition
\ref{p:Wod}
\begin{align*}
 f\,\omega^n&=\frac{n}{n-1}d(f\alpha)\wedge\go^{n-1}=\frac{n}{n-1}d\left(\sum_{i=1}^N f_i \,dg_i\right)\wedge\omega^{n-1}\\
            &=\frac{1}{n-1} \sum_{i=1}^N \{ f_i,g_i\}\,\omega^{n},
\end{align*} 
and we reach the conclusion $f=\frac{1}{n-1} \sum\limits_{i=1}^N \{ f_i,g_i\}\in \{\cP^0,\cP^0\}$.
\end{proof}

\begin{remark}\label{rem:OneDim} If $n=1$, then $\{\cP^0,\cP^0\}=0$. Indeed, by Eq.\ \eqref{I2} with $n=1$, 
$\{f,g\}\,\omega=df\wedge dg$, so if $f,g\in\cP^0$ we have $\{f,g\}=0$. In this one--dimensional case,
there are two different symplectic residues ($\res^+$, $\res^-$), corresponding to each 
connected component of $T^*S^1\setminus S^1$; then, when $l\not=0$ or $m\not=0$
we can argue as in the corresponding part of the proof of Theorem \ref{t:HomFuncPoissonBrack}, to
conclude that 
\begin{equation} \label{PlPmcases-a}
\{\cP^{l},\cP^{m}\}=
\begin{cases} \cP^{l+m-1}, & \text{if }l+m\not=0,\\ 
                                \ker(\res^+)\cap\ker(\res^-)\cap\cP^{l+m-1}, & \text{if }l+m=0.
\end{cases}                 
\end{equation}

\end{remark}

\subsection{The residue of a classical symbol function}              

As an application of homogeneous cohomology we give a precise criterion
when a classical symbol function is a sum of partial derivatives. A more thorough
discussion of de Rham cohomology of forms whose coefficients are symbol functions
will be given in a subsequent publication.


\subsubsection{Classes of symbols}\label{s:Symbols}

Let $U\subset \R^n$ be an open subset.
We denote by $\sym^m(U\times \R^N)$, $m\in \R$, the space of symbols 
of H\"ormander type $(1,0)$ (\textsc{H\"ormander} \cite{Hor:FIOI}, 
\textsc{Shubin} \cite{Shu:POST}) and order at most $m$. 
More precisely, $\sym^m(U\times \R^N)$ consists of those 
$a\in \CC^\infty(U\times \R^N)$ such that for multi--indices 
$\alpha\in \Z_+^n,\gamma\in \Z_+^N$ and compact subsets $K\subset U$ 
we have an estimate
\begin{equation}\label{eq:3.1}
    \bigl|\partial_x^\alpha\partial_\xi^\gamma a(x,\xi)\bigr|
  \le C_{\alpha,\gamma,K} (1+|\xi|)^{m-|\gamma|}, \quad x\in K,\ \xi\in\R^N.
\end{equation}
The best constants in \eqref{eq:3.1} provide a set of 
semi--norms which endow
$\sym^\infty (U\times \R^N):=\bigcup_{m\in\R}\sym^m(U\times \R^N)$ with the structure of a
Fr{\'e}chet algebra. 
A symbol $a\in\sym^m(U\times \R^N)$ is called \emph{classical} if there are
$a_{m-j}\in \cinf{U\times\R^N}$ with
\begin{equation}\label{eq:classical}
   a_{m-j}(x,r\xi)=r^{m-j} a_{m-j}(x,\xi),\quad r\ge 1, |\xi|\ge 1,
\end{equation}
such that for $N\in\Z_+$
\begin{equation}\label{eq:classical-a}
  a-\sum_{j=0}^{N-1} a_{m-j}\in\sym^{m-N}(U\times \R^N).
\end{equation}
The latter property is usually abbreviated $a\sim\sum\limits_{j=0}^\infty a_{m-j}$.

Homogeneity and smoothness at $0$ contradict each other except for homogeneous polynomials. Our convention
is that symbols should always be smooth functions, thus
the $a_{m-j}$ are smooth everywhere but homogeneous only in the restricted sense of
Eq.\ \eqref{eq:classical}. The homogeneous extension of $a_{m-j}$ to $U\times \R^n\setminus \{0\}$
will also be needed: we put 
\begin{equation}\label{eq:HomExtension}
 a_{m-j}^h(x,\xi):=a_{m-j}(x,\xi/|\xi|)\; |\xi|^{m-j},\quad (x,\xi)\in U\times \R^n\setminus \{0\}.
\end{equation}
Furthermore, we denote by
$\sym^{-\infty}(U\times\R^n):=\bigcap_{a\in\R}\sym^{a}(U\times\R^n)$
the space of \emph{smoothing symbols}. 

$\CS^m(U\times \R^N)\subset \sym^m(U\times\R^N)$ denotes the space of classical symbols of order $m$. 
Let us repeat the warning from the first paragraph of the introduction: in
view of \eqref{eq:3.1} and \eqref{eq:classical} one has $\CS^m(U\times
\R^N)\subset \CS^{m+r}(U\times\R^N)$ if and only if $r$ is a non--negative
integer. For non--integral $r\ge 0$ one has $\CS^m(U\times\R^N)\cap
\CS^{m+r}(U\times \R^N)=\sym^{-\infty}(U\times\R^N).$

Note that $\sym^{-\infty}(U\times \R^N)=\CS^{-\infty}(U\times
\R^N)=\bigcap_{a\in\R}\CS^{a}(U\times\R^n)$.

For brevity we write $\Csym^{a}(\R^n)$ ($\sym^a(\R^n)$) instead of $\Csym^a(\{\textup{pt}\}\times \R^n)$
($\sym^a(\{\textup{pt}\}\times \R^n)$). Note that $\sym^{-\infty}(\R^n)=\sS(\R^n)$ is nothing
but the Schwartz space of rapidly decaying functions.


We will now discuss the analogue of Corollary \ref{cor:homres} for the space $\Csym^a(\R^n)$.
We start with smoothing symbols:

\begin{lemma}\label{symbolsumofder} Let $f\in\sS(\R^n)$ be a Schwartz function. 
Then there are functions $\sigma_j\in\Csym^{-n+1}(\R^n)$ such that 
$f=\sum\limits_{j=1}^n\partial_{\xi_j}\sigma_j$. 

One can choose the $\sigma_j$ to be Schwartz functions if and only if $\int_{\R^n} f=0$.
\end{lemma}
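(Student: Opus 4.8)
The plan is to prove the decomposition by an explicit construction using the Fourier transform, and to handle the two assertions (the $\Csym^{-n+1}$ statement, always true; the Schwartz statement, conditional) separately but in parallel.

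First I would set up the Fourier-analytic framework. Write $f = \widehat{g}$ for some Schwartz function $g$ — equivalently, look at the inverse Fourier transform of $f$. The point is that the operator $\partial_{\xi_j}$ on the Fourier side corresponds (up to a constant $i$) to multiplication by the coordinate function $x_j$ on the pre-Fourier side. So finding $\sigma_j$ with $f = \sum_j \partial_{\xi_j}\sigma_j$ amounts to writing $g(x) = \sum_j x_j\, h_j(x)$ for suitable $h_j$, and then setting $\sigma_j := \frac1i\widehat{h_j}$ (with the appropriate normalization). The decomposition $g(x) = \sum_j x_j h_j(x)$ with $h_j$ smooth is always possible for $g$ vanishing at the origin: e.g. $h_j(x) = \int_0^1 (\partial_{x_j} g)(tx)\,dt$ gives $g(x) - g(0) = \sum_j x_j h_j(x)$ by the fundamental theorem of calculus. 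But this $h_j$ need not be Schwartz, and $g = f^\vee$ need not vanish at $0$ — indeed $g(0) = \int f$ up to a constant. This is exactly where the two cases diverge.

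For the unconditional statement I would argue as follows. The obstruction is the value $g(0) = c\int_{\R^n} f$. Pick once and for all a fixed Schwartz function $\chi$ with $\int\chi \neq 0$, say a Gaussian, and subtract off a multiple: $g_0 := g - \frac{g(0)}{\chi(0)}\chi$ vanishes at the origin and is Schwartz, so the Hadamard-type lemma above (with a cutoff argument to ensure the $h_j$ decay — split $h_j = \theta h_j + (1-\theta)h_j$ with $\theta$ a compactly supported bump equal to $1$ near $0$, handling the tail region where $|x|\ge 1$ by dividing $g_0$ directly by $|x|^2$ times $x_j$) produces Schwartz $h_j$ with $g_0 = \sum_j x_j h_j$. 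Fourier-transforming back gives Schwartz functions $\tilde\sigma_j$ with $\widehat{g_0} = \sum_j\partial_{\xi_j}\tilde\sigma_j$, i.e. $f - (\text{const})\widehat\chi = \sum_j\partial_{\xi_j}\tilde\sigma_j$. Now $\widehat\chi$ is again Schwartz (a Gaussian, say), and it remains to write $\widehat\chi$ itself as $\sum_j\partial_{\xi_j}\sigma_j^0$ with $\sigma_j^0\in\Csym^{-n+1}(\R^n)$. This is where I would invoke Corollary \ref{cor:homres} (or rather its spirit): a concrete choice is to take a classical symbol $\psi$ of order $-n$ with $\res_\go(\psi)\ne 0$ — for instance a function equal to $|\xi|^{-n}$ for $|\xi|\ge 1$ and smoothed out near the origin — rescale it to match $\res_\go$ with that of the (vanishing, since $\widehat\chi$ is Schwartz, hence has residue $0$... ) — wait, more directly: since $\widehat\chi\in\sS(\R^n)\subset\Csym^{a}$ for every $a$, in particular the homogeneous-cohomology obstruction in $H^n\cP^{0}$ attached to it at order $a=-n+1-1 = -n$ can be arranged; but cleanest is simply to note $\widehat\chi\in\Csym^{-n}(\R^n)$ trivially and quote that any element of $\Csym^{a}(\R^n)$ with vanishing residue $\res_\go$ is a sum of $\partial_{\xi_j}$ of elements of $\Csym^{a+1}(\R^n)$ — this is precisely the classical-symbol analogue of Corollary \ref{cor:homres}, established elsewhere in the paper via the homogeneous cohomology of Section \ref{s:CohHomDifForms}, and since $\widehat\chi$ is Schwartz its residue vanishes. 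Applying it at $a = -n$ gives $\sigma_j^0\in\Csym^{-n+1}(\R^n)$ with $\widehat\chi = \sum_j\partial_{\xi_j}\sigma_j^0$. Adding $\tilde\sigma_j$ and a constant multiple of $\sigma_j^0$ yields the desired $\sigma_j\in\Csym^{-n+1}(\R^n)$ (the Schwartz $\tilde\sigma_j$ lie in $\Csym^{-n+1}$ as well, since $\sS(\R^n)=\CS^{-\infty}(\R^n)\subset\Csym^{-n+1}(\R^n)$).

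For the conditional statement, if $\int_{\R^n}f = 0$ then $g(0)=0$ from the start, so the subtraction step is unnecessary and the Hadamard/cutoff argument directly produces Schwartz $h_j$, hence Schwartz $\sigma_j$. Conversely, if $f = \sum_j\partial_{\xi_j}\sigma_j$ with all $\sigma_j\in\sS(\R^n)$, then $\int_{\R^n}f = \sum_j\int_{\R^n}\partial_{\xi_j}\sigma_j = 0$ by the fundamental theorem of calculus (no boundary terms, since each $\sigma_j$ decays rapidly). The main obstacle I anticipate is the cutoff bookkeeping: ensuring that the functions $h_j(x) = \int_0^1(\partial_{x_j}g_0)(tx)\,dt$, which are smooth but a priori only bounded, can be modified near infinity to be genuinely Schwartz while preserving $\sum_j x_j h_j = g_0$ — the clean fix is to write $g_0 = \theta g_0 + (1-\theta)g_0$ with $\theta\in C_c^\infty$ a bump $\equiv 1$ near $0$, treat $\theta g_0$ by the integral formula (compactly supported $h_j$, automatically Schwartz) and $(1-\theta)g_0$ by $h_j := x_j|x|^{-2}(1-\theta)g_0$ (Schwartz, since $|x|^{-2}$ is smooth on the support of $1-\theta$ and everything decays rapidly), after which $\sum_j x_j h_j$ reconstitutes $g_0$ on the nose.
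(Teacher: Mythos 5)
Your Fourier--side reduction is fine as far as it goes: subtracting a multiple of a fixed Gaussian to kill $g(0)$, splitting $g_0$ with a cutoff, and transforming back correctly reduces everything to one special case, namely writing the single Schwartz function $\widehat\chi$ (which has nonzero integral) as $\sum_j\partial_{\xi_j}\sigma_j^0$ with $\sigma_j^0\in\CS^{-n+1}(\R^n)$; the ``if and only if'' part for Schwartz $\sigma_j$ is also essentially correct, up to the small repair that the Hadamard functions $h_j(x)=\int_0^1\partial_{x_j}(\theta g_0)(tx)\,dt$ are \emph{not} compactly supported (they decay only like $|x|^{-1}$ along rays meeting $\supp(\theta g_0)$), so you must multiply them by a further bump equal to $1$ on $\supp(\theta g_0)$ before calling them Schwartz. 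The genuine gap is that the remaining special case is exactly where the content of the lemma lies, and your treatment of it is circular. The statement you quote --- every $f\in\CS^{a}(\R^n)$ with $\res(f)=0$ is $\sum_j\partial_{\xi_j}\sigma_j$ with $\sigma_j\in\CS^{a+1}(\R^n)$ --- is Proposition \plref{p:SumDeriv}, and in this paper that proposition is deduced \emph{from} Lemma \plref{symbolsumofder}: the homogeneous cohomology (Corollary \plref{cor:homres}) only treats the homogeneous terms of the symbol expansion, asymptotic summation then leaves a Schwartz remainder, and it is precisely the present lemma that disposes of that remainder. Corollary \plref{cor:homres} by itself says nothing about Schwartz functions: a Schwartz function is not homogeneous, all its residue data vanish, and --- as your own aborted attempt with the rescaled order $-n$ symbol $\psi$ shows --- the true obstruction for writing a Schwartz function as a divergence of Schwartz functions is $\int f$, which $\res$ does not detect. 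So at the decisive step you invoke (a consequence of) the lemma to prove the lemma.

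The gap can be closed, and this is what the paper does: take the Poincar\'e--lemma primitive directly in the $\xi$--variables, $\sigma_j(\xi)=\int_0^1 f(t\xi)\,\xi_j\,t^{n-1}\,dt$, which satisfies $\sum_j\partial_{\xi_j}\sigma_j=f$, and check via the substitution $r=t|\xi|$ that for $|\xi|\ge 1$ it splits as a term homogeneous of degree $-n+1$, namely $\bigl(\int_0^\infty f(r\xi/|\xi|)\,r^{n-1}dr\bigr)\,|\xi|^{-n}\xi_j$, plus a remainder with Schwartz--type decay, so that $\sigma_j\in\CS^{-n+1}(\R^n)$ with a one--term homogeneous expansion. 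Within your scheme it would suffice to carry this out for the single Gaussian $\widehat\chi$: there $\sigma_j^0(\xi)=\xi_j\,u(|\xi|)$ with $u(r)=r^{-n}\int_0^r s^{n-1}e^{-s^2}ds$ is smooth, satisfies $\sum_j\partial_{\xi_j}\sigma_j^0=e^{-|\xi|^2}$, and is a classical symbol of order $-n+1$. Some such explicit construction must replace the citation; with it, your Fourier--transform route becomes a valid (slightly longer) alternative to the paper's direct argument.
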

\begin{proof} We start with the first claim and note that if $n=1$ then the function 
$\sigma(\xi)=\int_{-\infty}^\xi f(t)\,dt$
is in $\Csym^0(\R)$ and $\partial_\xi\sigma=f$.

For general $n$ we infer from the standard proof of the Poincar\'e Lemma in $\R^n$ applied 
to the closed form $f\,d\xi_1\wedge\cdots\wedge d\xi_n$, that we can put
\[\sigma_j(\xi)=\int_0^1f(t\xi)\,\xi_j\,t^{n-1}\,dt.\] 
Indeed, 
\[\partial_{\xi_j}\sigma_j(\xi)=
  \int_0^1f(t\xi)\,t^{n-1}\,dt+\int_0^1\partial_{\xi_j}(f)(t\xi)\,\xi_j\,t^{n}\,dt,
\] 
thus 
\begin{align*}
 \sum_{j=1}^n\partial_{\xi_j}\sigma_j(\xi) &=\int_0^1f(t\xi)\;n \,t^{n-1}\,dt+
       \int_0^1\sum_{j=1}^n\partial_{\xi_j}(f)(t\xi)\,\xi_j\,t^{n}\,dt \\ 
    &=\int_0^1\partial_t\Big(f(t\xi)\,t^{n}\Big)\,dt  
     =f(\xi).
\end{align*}
It remains to show that $\sigma_j\in\Csym^{-n+1}(\R^n)$. 
The function $\sigma_j$ is certainly smooth. 
For $|\xi|\geq1$ we have by change of variables $r=t|\xi|$: 
\begin{align*}
   \sigma_j(\xi)&=\int_0^{|\xi|}f\Bigl(r\dfrac{\xi}{|\xi|}\Bigr)r^{n-1}\,dr\;|\xi|^{-n}\,\xi_j \\ 
                &=\int_0^{\infty}f\Bigl(r\dfrac{\xi}{|\xi|}\Bigr)r^{n-1}\,dr\,|\xi|^{-n}\,\xi_j   
                       -  \int_{|\xi|}^\infty f\Bigl(r\dfrac{\xi}{|\xi|}\Bigr)r^{n-1}\,dr\,|\xi|^{-n}\,\xi_j.
\end{align*} 
The first summand is homogeneous of degree $-n+1$ while the second summand satisfies the estimates of a 
Schwartz function at $\infty$ (it is not a Schwartz function since it is not smooth at $0$). 
Thus $\sigma_j\in\Csym^{-n+1}(\R^n)$ and its homogeneous expansion consists only of one term of homogeneity $-n+1$: 
\[\sigma_j(\xi)\sim\int_0^\infty f\Bigl(r\dfrac{\xi}{|\xi|}\Bigr)r^{n-1}\,dr\,|\xi|^{-n\,}\xi_j,
\] 
proving the first claim.

For the second claim the necessity of $\int_{\R^n} f=0$ is clear. In fact the proof
of the Poincar\'e Lemma with compact supports (\textsc{Bott} and \textsc{Tu}
\cite[Sec.\ I.4]{BotTu:DFA}) works verbatim
for the forms $\gO^\bullet \sS(\R^n)$ with coefficients in $\sS(\R^n)$. Thus
the closed $n$--form $f d\xi_1\wedge\cdots\wedge d\xi_n$ is exact in $\gO^\bullet \sS(\R^n)$ 
if and only if $\int_{\R^n} f=0$. If this is the case then
$f d\xi_1\wedge\cdots\wedge d\xi_n=d\eta$ with an $(n-1)$--form $\eta\in \gO^{n-1}\sS(\R^n)$.
Expanding $\eta$ as in \eqref{eq:EtaForm} we see that 
$f=\sum\limits_{j=1}^n \partial_{\xi_j}\sigma_j$ with Schwartz functions $\sigma_j$.
\end{proof}

\subsubsection{The residue and the regularized (cut--off) integral}\label{ss:SymRes}
We now extend the residue (Def.\ \ref{defres1}) from homogeneous functions to $\CS^a(\R^n)$:

Let $\sigma\in\CS^a(\R^n)$ have asymptotic expansion $\sigma\sim 
\sum\limits_{j=0}^\infty \sigma_{a-j}$, cf.\ Eq.\ \eqref{eq:classical} and
\eqref{eq:HomExtension}. Then $\sigma^h_{a-j}\in\cP^{a-j}(\R^n\setminus\{0\})$.
Put
\begin{equation}\label{eq:SymRes}
\begin{split}
 \res(\sigma)&:=\res_{\go}(\sigma_{-n}^h) 
        = \int_{S^{n-1}} \pullbS ( \sigma_{-n}^h)\, d \vol_{S^{n-1}}\\
        &= \int_{S^{n-1}} \pullbS ( \sigma_{-n}^h \iota_\livf \go ), \quad
        \go=d\xi_1\wedge\ldots\wedge d\xi_n.
\end{split}
\end{equation}
In other words the residue of $\sigma$ equals the residue of its homogeneous
component of homogeneity degree $-n$. Thus $\res(\sigma)\not=0$ only if
$a$ is an integer $\ge -n$. The functional $\res$ was studied 
in \cite{Pay:NRC}.

We also recall the \emph{regularized integral} or cut--off integral
$\reginttext: \Csym^a(\R^n)\longrightarrow \C$ (cf.\ e.g.\
\textsc{Lesch} \cite[Sec.\ 4.2]{Les:PDO}): If $f\in\Csym^a(\R^n)$ then the asymptotic expansion
$f\sim\sum\limits_{j=0}^\infty f_{a-j}$ implies that as $R\to\infty$ one has an asymptotic 
expansion
\begin{equation}
\int_{|\xi|\leq R}f(\xi)\,d\xi\underset{R\to\infty}{\sim}\sum_{\substack{j=0\\
a-j+n\not=0}}^\infty c_{a-j}\,R^{a-j+n}+\widetilde{c}\,R^0+\res(f)\log R.
\end{equation}
The regularized integral $\reginttext_{\R^n}f(\xi)\,d\xi$ is, by definition, the 
constant term in this asymptotic expansion, i.e.\ $\widetilde{c}$. It has the 
property that $\reginttext_{\R^n} \pl_{\xi_j} f \not =0$ only if $a$ is an integer $\ge -n+1$.

The following result generalizes \cite[Prop.\ 2, Thm.\ 2]{Pay:NRC}
where it was proved modulo smoothing symbols.

\begin{prop}\label{p:SumDeriv} \textup{1.} Let $a\in\Z$. For a symbol 
$f\in\CS^a(\R^n)$ there exist symbols  $\sigma_j\in\CS^{r(a)}(\R^n), r(a):=\max(a,-n)+1,$ 
such that $f=\sum\limits_{j=1}^n \pl_{\xi_j}\sigma_j$ if and only if $\res(f)=0$.

\textup{2.} Let $a\in\R\setminus\Z$. For a symbol $f\in\CS^a(\R^n)$ there 
exist symbols $\sigma_j\in\CS^{a+1}(\R^n)$ such that 
$f=\sum\limits_{j=1}^n \pl_{\xi_j}\sigma_j$ 
if and only if $\reginttext_{\R^n} f=0$.
\end{prop}
\begin{proof} 
1. We will repeatedly use that by construction the asymptotic relation 
 Eq.\ \eqref{eq:classical-a} may be differentiated, i.e.\ if $g\in\CS^a(\R^n)$
 with $g\sim \sum\limits_{l=0}^\infty g_{a-l}$ then 
 \[
 \pl_{\xi_j} g  \sim \sum\limits_{l=0}^\infty \pl_{\xi_j} g_{a-l}.
 \]

Now let $a\in\Z$ and $f\in\CS^a(\R^n)$ with $f\sim \suml_{l=0}^\infty f_{a-l}$.
If $f=\sum\limits_{j=1}^n \pl_{\xi_j} \tau_j$ with $\tau_j\in\CS^{r(a)}(\R^n)$
then certainly $f_{-n}^h=\suml_{j=1}^n \pl_{\xi_j}\tau_{j,-n+1}^h$
and hence $\res(f)=\res(f_{-n}^h)=0$ by Corollary \ref{cor:homres}.

Conversely, if $\res(f)=0$ then again by Corollary \ref{cor:homres} there
are $\tau_{j,a-l+1}^h\in\cP^{a-l+1}(\Rnnull)$ such that
$f_{a-l}^h=\suml_{j=1}^n \pl_{\xi_j} \tau_{j,a-l+1}^h.$

We fix a cut--off function $\chi\in\cinf{\R^n}$ such that
\begin{equation}\label{eq:CutOffFctn}
 \chi(\xi)= \begin{cases}1, & \text{if } |\xi|\ge 1/2, \\
                       0, & \text{if } |\xi|\le 1/4.
          \end{cases}
\end{equation}	  
Now asymptotic summation \cite[Prop.\ 3.5]{Shu:POST} guarantees the existence of $\tau_j\in
\CS^{a+1}(\R^n)$ such that $\tau_j\sim \suml_{l=0}^\infty \chi \tau_{j,a-l+1}^h$
and hence
\begin{align}
   &\sum_{j=1}^n \pl_{\xi_j}\tau_j 
   \sim \sum_{l=0}^\infty \sum_{j=1}^n
      \chi \plxj  \tau_{j,a-l+1}^h \sim \sum_{l=0}^\infty f_{a-l} \sim f,\\
      \intertext{thus}
   &f-\suml_{j=1}^n \plxj \tau_j=:g\in \sym^{-\infty}(\R^n)=\sS(\R^n).
      \label{eq:107}  
\end{align}
Applying Lemma \ref{symbolsumofder} to $g$ the case $a\in\Z$ is settled.

2. Let $a\not\in\Z$. It was remarked before Proposition \ref{p:SumDeriv} that the condition
$\reginttext_{\R^n} f=0$ is necessary. To prove sufficiency consider
$f\in\CS^a(\R^n)$ with $\reginttext_{\R^n} f =0$.  Since $a\not\in\Z$, $\res(f)=0$
trivially. Therefore, as before we arrive
at \eqref{eq:107} (this is the content of \cite[Prop.\ 2]{Pay:NRC}).
Still we have $\int_{\R^n} g=\reginttext_{\R^n}f -\suml_{j=1}^n \reginttext_{\R^n}
\plxj \tau_j=0.$ Now apply the second part of Lemma \ref{symbolsumofder}
to $g$ and the proof is complete.
\end{proof} 


\section{Pseudodifferential operators and tracial functionals}                    
\label{s:POT}

\subsection*{Standing assumptions} Unless otherwise said in the rest of the
paper $M$ will denote a smooth closed connected riemannian manifold of dimension $n$. 
The riemannian metric is chosen for convenience only to have an
$L^2$--structure at our disposal. One could avoid choosing a metric by
working with densities.

Given $b\in\R$, we use the notation $\Z_{\le b}:=\Z\cap(-\infty,b]$, $\Z_{>b}:=\Z\cap(b,+\infty)$.
\subsection{Classical pseudodifferential operators}\label{sectionclassicalpdos}                

We denote by $\pdo^\bullet(M)$ the algebra of pseudodifferential operators 
with complete symbols of H\"ormander type $(1,0)$ 
(\cite{Hor:FIOI}, \cite{Shu:POST}), 
see Subsection \ref{s:Symbols}. The subalgebra of classical
pseudodifferential operators is denoted by $\CL^\bullet(M)$.

Let $U\subset \R^n$ be an open subset. Recall that for a symbol 
$\sigma\in\sym^m(U\times\R^n)$, the canonical pseudodifferential operator 
associated to $\sigma$ is defined by
\begin{equation}\label{eq:psido}%
\begin{split}
 \Op(\sigma)\, u (x) &:= \int_{\R^n} \, e^{i \langle x,\xi \rangle} \, 
                     \sigma(x,\xi) \, \hat{u} (\xi ) \, \dbar \xi \\
                     &= \int_{\R^n}\int_U \, e^{i \langle x-y,\xi \rangle} \, 
                       \sigma(x,\xi) \, u(y)\, dy\, \dbar \xi,
\end{split}\qquad \dbar\xi:=(2\pi)^{-n} d\xi.
\end{equation}
For a manifold $M$, elements of $\pdo^\bullet(M)$ (resp.\ $\CL^\bullet(M)$)
can locally be written as $\Op(\sigma)$ with $\sigma\in \sym^\bullet(U\times
\R^n)$ (resp.\ $\CS^\bullet(U\times\R^n)$).

Recall that there is an exact sequence
\begin{equation}\label{eq:LeadingSymbol}
0\longrightarrow \CL^{m-1}(M)\longhookrightarrow \CL^m(M)\xrightarrow{\sigma_m}
\cP^m(T^*M\setminus M)\longrightarrow 0,
\end{equation}
where $\sigma_m(A)$ is the homogeneous leading symbol of $A\in\CL^m(M)$.
$\sigma_m$ has a (non--canonical) global right inverse $\Op$ which is obtained 
by patching together the locally defined maps in Eq.\ \eqref{eq:psido}.
$\sigma_m(A)$ is a homogeneous function on the symplectic cone 
$T^*M\setminus M$ (cf.\ Subsection \ref{ss:SympCones}). We will tacitly
identify $\cP^m(T^*M\setminus M)$ by restriction with $\cinf{S^*M}$.
Here, $S^*M$ is the cosphere bundle, i.e.\ the unit sphere bundle
$\subset T^*M$. 

Recall that the leading symbol map is multiplicative in the sense
that 
\begin{equation}\label{eq:SymbolMultiplicative}
 \sigma_{a+b}(A\circ B)=\sigma_a(A)\,\sigma_b(B)
\end{equation}
for $A\in\CL^a(M), B\in\CL^b(M)$. Furthermore, we record the
important formula
\begin{equation}\label{eq:108}  
 \sigma_{a+b-1}\bigl([A,B]\bigr) = \frac 1i \{ \sigma_a(A),\sigma_b(B)\},
\end{equation}
which is a consequence of the asymptotic formula for the \emph{complete}
symbol of a product, cf.\ e.g.\ \cite[Thm.\ 3.4]{Shu:POST}.

\subsection{Tracial functionals on subspaces of $\CL^\bullet(M)$} \label{ss:Traces} 

Let $a\in\R$. $\CLa$ is an algebra if and only if $a\in\Z_{\le 0}.$ In this case a linear
functional $\tau:\CLa\longrightarrow\C$ is a trace if and only if 
\begin{equation}\label{eq:109}  
     \tau\bigl([A,B]\bigr) =0,\quad \text{ for all }A,B\in\CLa.
\end{equation}
Therefore, in order to characterize traces on $\CLa$, one has to understand the 
space of commutators $[\CLa,\CLa]$.
Note that the commutator $[A,B]\in\CL^{2a}(M)$.
Here, in the situation of operators with scalar coefficients, one even has
$[A,B]\in\CL^{2a-1}(M)$. However, $AB$ and $BA$ are only in $\CL^{2a}(M)$
and that $[A,B]\in\CL^{2a-1}(M)$ is only due to the fact that the
leading symbols of $A$ and $B$ commute. If $A,B$ are pseudodifferential
operators acting on sections of a vector bundle (see Section
\ref{s:TracesVectBund}) then one can only conclude that $[A,B]$ is of
order $2a$. 

Conversely, if
$\tau:\CL^{2a}(M)\longrightarrow\C$ is a linear functional satisfying
Eq.\ \eqref{eq:109} then any linear extension $\tilde \tau$ of $\tau$
to $\CLa$ is a trace on $\CLa$.

$\CL^{2a}(M)$ is a subspace of $\CLa$ if and only if $a\in\Z_{\le 0}$.
However, for any $a\in\R$ it makes sense to consider linear functionals
on $\CL^{2a}(M)$ satisfying \eqref{eq:109}:
\begin{dfn}\label{def:PreHyperTrace}
 Let $b\in\R$ and let $\tau:\CLb\longrightarrow \C$ be a
 linear functional.

 \textup{1. } $\tau$ is called a \emph{pretrace} if 
 $\tau\bigl([A,B]\bigr)=0$ for all $A,B\in \CL^{b/2}(M)$.

 \textup{2. } $\tau$ is called a \emph{hypertrace} if
 $\tau\bigl([A,B]\bigr)=0$ for all $A\in \CL^0(M), B\in\CLb$.

If $\CLa\subset\CLb$ we sometimes use the abbreviation
$\tau_a:=\tau\restriction\CLa$. 
\end{dfn} 
\begin{remark}
 If $b\in\Z_{\le 0}$ then any hypertrace on $\CLb$ is a trace
 on $\CLb$ since $\CLb\subset\CL^0(M)$. The restriction of a trace on $\CLb$ to
 $\CL^{2b}(M)$ is obviously a pretrace. 
\end{remark}

Next we discuss the canonical (pre--, hyper--)traces which exist on $\CLa$ for
various $a$. 

\subsubsection{The $L^2$--trace}\label{ss:L2Trace}
A pseudodifferential operator $A$ of order $\ord(A)<-n=-\dim M$ is a trace--class
operator. The standard Hilbert space trace on operators acting on $L^2(M)$ is denoted
by $\Tr$. Note that
\begin{equation}
     \Tr(A)=\int_{M}K_A(x,x)\,d\vol(x), \label{TrL2kernel}
    \end{equation}
where $K_A$ is the Schwartz kernel of the operator $A$. 
If $K_A$ is supported in a coordinate chart $U$ where $A$ is given as
$\Op(\sigma)$ with $\sigma\in\CS^a(U\times \R^n)$ then by Eq.\ \eqref{eq:psido}
\begin{equation}\label{defL2tracesymbol}
 \Tr(A)=\int_{U}\int_{\R^n}\sigma(x,\xi)\, \dbar\xi\, dx.
\end{equation} 

Since for any trace--class operator $K$ in the Hilbert space $L^2(M)$ and
any bounded operator $T$ in $L^2(M)$ one has $\Tr(KT)=\Tr(TK)$ it follows
that $\Tr$ is a hypertrace on $\CLa$ for any real $a<-n$. Furthermore,
if $p,q\ge 1$ are real numbers such that $1/p+1/q=1$  and if $A\in
 \sL^p(L^2(M))$, the $p$--th Schatten ideal of operators in $L^2(M)$,
 and $B\in\sL^q(L^2(M))$ then also $\Tr(AB)=\Tr(BA)$. 
 From  $\CLa\subset\sL^p(L^2(M))$ for $a<-n/p$ it then follows that
 \begin{equation}\label{eq:TrCommIsZero}
  \Tr\bigl([A,B]\bigr)=0, \textup{ for } A\in\CLa, B\in\CLb \textup{ if }
  a+b<-n,
 \end{equation} 
in particular $\Tr_a=\Tr\restriction \CLa$ is a pretrace for any $a<-n$. 
In fact, Eq.\ \eqref{eq:TrCommIsZero} can be improved slightly:

\begin{lemma}\label{afirmleq-n} Let $A\in\CLa$, $B\in\CLb$ with $a+b<-n+1$.
 Then $[A,B]$ is of trace--class and $\Tr\bigl([A,B]\bigr)=0$. 
\end{lemma}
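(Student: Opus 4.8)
The first assertion follows from the cancellation of leading symbols: by \eqref{eq:SymbolMultiplicative} one has $\sigma_{a+b}(AB)=\sigma_a(A)\sigma_b(B)=\sigma_b(B)\sigma_a(A)=\sigma_{a+b}(BA)$, so $[A,B]\in\CL^{a+b-1}(M)$; since $a+b-1<-n=-\dim M$, this operator is of trace class by the remark opening Subsection~\ref{ss:L2Trace}. For the vanishing of $\Tr\bigl([A,B]\bigr)$ the plan is to deform the order of $A$ holomorphically and appeal to the already--established identity \eqref{eq:TrCommIsZero}.

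First I would fix a positive elliptic $\Lambda\in\CL^1(M)$, say $\Lambda=(1+\Delta_M)^{1/2}$, and use its complex powers $\{\Lambda^z\}_{z\in\C}$, which form a holomorphic family of classical pseudodifferential operators with $\ord\Lambda^z=\Re z$. For every $z$ the leading symbols of $A\Lambda^zB$ and of $BA\Lambda^z$ coincide, so $[A\Lambda^z,B]\in\CL^{a+b-1+\Re z}(M)$, and $z\mapsto[A\Lambda^z,B]$ is a holomorphic family of classical pseudodifferential operators. Consider the connected open set $\Omega:=\{z\in\C\mid\Re z<1-n-(a+b)\}$; since $a+b<-n+1$ it contains $z=0$, and on it the order $a+b-1+\Re z$ is $<-n$. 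Hence on $\Omega$ the operators $[A\Lambda^z,B]$ are of trace class, and because $\CL^{m}(M)$ embeds continuously into $\cB^1(L^2(M))$ for $m<-n$, the function $\Phi(z):=\Tr\bigl([A\Lambda^z,B]\bigr)$ is holomorphic on $\Omega$.

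It then remains to note that $\Phi$ vanishes on the nonempty half--line $\{z\in\R\mid z<-n-(a+b)\}\subset\Omega$: for such $z$ one has $A\Lambda^z\in\CL^{a+z}(M)$ with $(a+z)+b<-n$, so $\Phi(z)=0$ by \eqref{eq:TrCommIsZero}. Since this set has accumulation points in $\Omega$, the identity theorem gives $\Phi\equiv0$ on $\Omega$, and in particular $\Tr\bigl([A,B]\bigr)=\Phi(0)=0$. The point that will need the most care is the holomorphy of $z\mapsto[A\Lambda^z,B]$ as a family of trace--class operators: one uses that the $(a+b)$--homogeneous component of the complete symbol of $[A\Lambda^z,B]$ vanishes identically in $z$, so that near any $z_0\in\Omega$ this family takes values in one fixed space $\CL^m(M)$ with $m<-n$, the rest being the standard theory of complex powers and of holomorphic families of pseudodifferential operators. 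Alternatively one can avoid complex powers entirely by writing $\Tr\bigl([A,B]\bigr)=\int_M K_{[A,B]}(x,x)\,d\vol(x)$, localizing, and checking through integration by parts in $x$ and $\xi$ that each homogeneous term of the complete symbol of $[A,B]$ contributes nothing --- all relevant symbols having order $<-n+1$, no boundary term at $|\xi|=\infty$ survives --- though this requires careful bookkeeping with the smoothing remainders of the local calculus.
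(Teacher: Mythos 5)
Your argument is correct, but it takes a genuinely different route from the paper. The trace--class part is handled identically (leading symbols cancel, so $[A,B]\in\CL^{a+b-1}(M)$ and $a+b-1<-n$). For the vanishing of the trace, the paper regularizes with the heat semigroup of a positive elliptic $P\in\CL^1(M)$: expanding $e^{-tP}B=\sum_{j<N}\frac{(-t)^j}{j!}(\nabla_P^jB)e^{-tP}+R_N(t)$ reduces $\Tr\bigl([A,B]e^{-tP}\bigr)$ to terms $t^j\,\Tr\bigl(A(\nabla_P^jB)e^{-tP}\bigr)$ with $j\ge1$, which tend to $0$ as $t\to0^+$ by the Grubb--Seeley asymptotics \eqref{eq:130} precisely because $j-a-b-n>0$; letting $t\to0^+$ and using that $[A,B]$ is trace class finishes the proof. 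You instead deform the order holomorphically via complex powers $\Lambda^z$ and conclude with the identity theorem from the regime $\Re z<-n-(a+b)$, where \eqref{eq:TrCommIsZero} applies; this Kontsevich--Vishik style argument is sound, provided you invoke the standard facts that $\Lambda^z$ is a holomorphic family, that the leading homogeneous components of $A\Lambda^zB$ and $BA\Lambda^z$ (of degree $a+b+z$, not $a+b$ as you wrote) cancel identically in $z$, and that the trace of a holomorphic family whose order has real part $<-n$ depends holomorphically on the parameter. One small correction: locally in $z$ the commutators do \emph{not} lie in one fixed classical space $\CL^m(M)$ --- by the paper's own convention these spaces are not nested under non--integer order shifts --- but they do lie in a fixed H\"ormander class of order $m<-n$ (symbols in $\sym^m$), which is all the trace--class estimate and the holomorphy require; the identity $[A\Lambda^z,B]=A[\Lambda^z,B]+[A,B]\Lambda^z$ makes the locally uniform drop of order transparent. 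Your closing sketch via integration by parts on the diagonal kernel is too vague to carry the proof and should not be relied on. As for what each approach buys: the paper's proof needs only short--time heat trace asymptotics and no holomorphic--family formalism, while yours trades that for Seeley's complex powers and the holomorphy of traces of gauged families, giving a shorter and more conceptual argument; neither is circular with respect to the statements this lemma feeds into.
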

\begin{proof}
We follow Sect.\ 4 of \cite{Les:NRP}. Let $P\in \CL^1(M)$ be an elliptic 
pseudodifferential operator whose leading symbol is positive and let $A\in
\CLa, B\in\CLb$. 
We put 
\[\nabla_P^0(B):=B, \quad \nabla_P^{j+1}B:=[P,\nabla_P^jB],\] 
and by induction, for all $j\in\N$ we have 
\[\nabla_P^jB\in \CL^{b}(M).\] 
Then, for $N$ large enough one has 
\[
  e^{-tP}B=\sum_{j=0}^{N-1}\dfrac{(-t)^j}{j!}(\nabla_P^jB)e^{-tP}+R_N(t),
\]
where $R_N(t)$ is a smoothing operator such that 
$\Tr\bigl(A R_N(t)\bigr)=\Tr\bigl(R_N(t) A\bigr)=O(t)$ as $t\to 0^+$; therefore 
\begin{equation}\label{eq:132}  
\Tr\bigl([A,B]e^{-tP}\bigr)=-\sum_{j=1}^{N-1}\dfrac{(-t)^j}{j!}\Tr\bigl(A(\nabla_P^jB)e^{-tP}\bigr)+O(t),\quad
t\to 0^+.
\end{equation}
Invoking the short time heat kernel asymptotics, cf.\ e.g.\ 
\textsc{Grubb} and \textsc{Seeley} \cite{GruSee:WPP}, 
\begin{equation}\label{eq:130}  
\Tr\bigl(A(\nabla_P^jB)e^{-tP}\bigr)\sim_{t \to 0^+}
       \sum_{k=0}^\infty(c_k+d_k\log t)t^{k-a-b-n}+\sum_{k=0}^\infty e_kt^k
\end{equation}
we see that for $j\ge 1$, thanks to $j-a-b-n>0$,
\begin{equation}\label{eq:131}  
     \lim\limits_{t\to 0^+} t^j\, \Tr\bigl( A (\nabla_P^jB)e^{-tP}\bigr)=0.
\end{equation}
Since $[A,B]\in \CL^{a+b-1}(M)$ and $a+b-1<-n$ the operator $[A,B]$ is of
trace--class and from \eqref{eq:132}, \eqref{eq:130}, and \eqref{eq:131} we thus infer
\[
\Tr\bigl([A,B]\bigr)=\lim\limits_{t\to 0^+}\Tr\bigl([A,B]e^{-tP}\bigr) =0.
\qedhere
\]
\end{proof}

\subsubsection{The Kontsevich--Vishik canonical trace}

For non--integer $a$ there is a regularization procedure which allows to
extend the $L^2$--trace in a canonical way to $\CLa$ 
(see \cite{KonVis:GDE},  \cite{Les:NRP}, \cite[Sec.\ 4.3]{Les:PDO}).
In brief for $a\in\R\setminus \Z_{\ge -n}$ there is a canonical
linear functional, \emph{the Kontsevich--Vishik canonical trace}, $\TR:\CLa\to \C$ such that
\begin{equation}
  \label{eq:CanonicalTrace}
  \begin{split}
   &\TR_a=\TR\restriction\CLa =\Tr\restriction\CLa=\Tr_a, \text{  if } a<-n,\\
      &\TR\bigl([A,B]\bigr)=0, \text{ if } A\in\CLa, B\in \CLb, a+b\not\in\Z_{\ge -n+1}.
     \end{split}
    \end{equation}     
Usually, the second property is stated only for $a+b\not\in \Z$. However,
if $a+b<-n$ then $AB$ is of trace--class and $\TR(AB)=\Tr(AB)=\Tr(BA)=\TR(BA)$
follows from the theory of the trace in Schatten ideals (see an analogous
discussion in the previous Subsection). If only $a+b-1<-n$ then 
$[A,B]$ is still of trace--class and $\TR\bigl([A,B]\bigr)=\Tr\bigl([A,B]\bigr)=0$ follows from 
Lemma \ref{afirmleq-n}.

The properties \eqref{eq:CanonicalTrace} immediately imply that the
canonical trace $\TR$ is a hypertrace and a pretrace on $\CLa$ for
$a\in\R\setminus \Z_{\ge-n}$.

\subsubsection{The residue trace}   

The \emph{residue trace}, called by some authors the noncommutative residue, somehow
complements the canonical trace. 
In terms of the complete symbol, the residue trace of an operator $A\in\CL^\bullet(M)$ is 
given by (see \cite{Wod:NRF}):
\[\Res(A)=\dfrac{1}{(2\pi)^n}\res(\sigma(A))=\dfrac{1}{(2\pi)^n}\int_M\int_{S_x^{*}M}\sigma_{-n}(A)(x,\xi)\,\nu(\xi)\wedge dx,\] 
where $\nu(\xi)$ is a volume form on $S_x^{*}M$ and $\res$ is the symplectic
residue on $T^*M\setminus M$ (cf.\ Subsection \plref{ss:SympRes} 
and Section \plref{ss:SymRes}). 
$\Res$ is the unique trace on the whole algebra $\CL^\bullet(M)$ whenever $n>1$ 
(\cite{Wod:NRF}, \cite{BryGet:HAPDO}, \cite{FGLS:NRM}, \cite{Les:NRP}). 
By definition, this trace vanishes on trace--class 
pseudodifferential operators and non--integer order pseudodifferential operators.

The residue trace $\Res$ is a pretrace and a hypertrace
on $\CLa$ for all $a\in\R$. It is non--trivial, however, only if $a\in\Z_{\ge -n}$.

\section{Operators as sums of commutators}    
\label{s:OSC}

In order to classify traces and (pre--, hyper--)traces on $\CLa$ we first
study the representation of an operator as a sum of commutators. 

\subsection{Smoothing operators }             

The closure of the algebra $\CL^{-\infty}(M)$ of smoothing operators in 
$\cB(L^2(M))$ is the algebra of compact operators. The latter is known
to be simple. Indeed one has the following, which is in a sense an analogue of
the second part of Lemma \plref{symbolsumofder}:

\begin{theorem}[{\cite[Thm.\ A.1]{Gui:RTFIO}}]\label{t:SmoothCommutator} 
 Let $M$ be a closed manifold. Then for any $J\in\CL^{-\infty}(M)$
 with $\Tr(J)=1$ the following holds: for $R\in \CL^{-\infty}(M)$
 there exist smoothing operators $S_1,\ldots,S_{N},
 T_1,\ldots,T_{N}\in\CLsm$, such that  
\[R=\Tr(R)\; J+\sum_{j=1}^{N}[S_j,T_j].\] 
Briefly, we have an exact sequence
\begin{equation}\label{eq:Exact}
0\longrightarrow [\CLsm,\CLsm]        \longrightarrow
                 \CLsm           \xrightarrow{\Tr}   \C  \longrightarrow 0.
                \end{equation} 
\end{theorem}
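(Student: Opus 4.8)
The plan is to reduce the statement to a classical fact about matrix algebras and smoothing operators. First I would observe that the two claims are equivalent: if every $R\in\CL^{-\infty}(M)$ with $\Tr(R)=0$ lies in $[\CLsm,\CLsm]$, then for arbitrary $R$ we write $R - \Tr(R)J$, which is smoothing with vanishing trace, and hence a finite sum of commutators; conversely the displayed formula plus $\Tr([S_j,T_j])=0$ shows $\ker\Tr = [\CLsm,\CLsm]$ and gives exactness of \eqref{eq:Exact}. So it suffices to show that a smoothing operator of trace zero is a finite sum of commutators of smoothing operators.

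\smallskip
The key structural fact is that $\CL^{-\infty}(M)$, acting on $L^2(M)$, is (via a Schwartz kernel representation) a dense subalgebra behaving like an increasing union of finite-rank algebras. Concretely, fix an orthonormal basis $(e_k)_{k\ge 0}$ of $L^2(M)$ consisting of smooth functions (e.g.\ eigenfunctions of an elliptic positive operator); then the rank-one operators $E_{kl}:=\langle\,\cdot\,,e_l\rangle e_k$ are smoothing, and any $R\in\CL^{-\infty}(M)$ has a rapidly-decaying matrix $R_{kl}=\langle Re_l,e_k\rangle$. The next step is the standard linear-algebra identity: for $k\neq l$ one has $E_{kl}=[E_{k0},E_{0l}]$ when $0\notin\{k,l\}$, and a diagonal difference $E_{kk}-E_{ll}=[E_{kl},E_{lk}]$ is also a commutator; so the only obstruction to being a sum of commutators is the total trace $\sum_k R_{kk}$. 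I would make this quantitative while keeping everything inside $\CL^{-\infty}(M)$: split $R$ into its "off-diagonal plus traceless-diagonal" part, which is a (norm-convergent, but one must be careful) combination of commutators, plus a multiple of a fixed operator $J$.

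\smallskip
The main obstacle — and the reason one cites Guillemin rather than just quoting finite-dimensional linear algebra — is the convergence/boundedness bookkeeping: a naive sum $\sum_{k\neq l}R_{kl}E_{kl}=\sum_{k\neq l}R_{kl}[E_{k0},E_{0l}]$ is an infinite sum of commutators, not a finite one, and one must regroup it into \emph{finitely many} commutators $[S_j,T_j]$ whose factors $S_j,T_j$ are genuinely smoothing (i.e.\ have rapidly decaying, smooth Schwartz kernels), not merely Hilbert--Schmidt. The trick is to choose the "shift-like" building blocks cleverly: e.g.\ pick $S,T$ smoothing so that $[S,T]$ realizes an entire off-diagonal block in one stroke, using that multiplication by rapidly-decaying matrices stays within smoothing operators and that one is free to reindex $k\mapsto k+1$ by a (smoothing-order-preserving) weighted shift built from the $e_k$. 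So concretely: (i) reduce to $\Tr(R)=0$; (ii) write $R = R^{\mathrm{off}} + R^{\mathrm{diag}}$ with $\Tr R^{\mathrm{diag}}=0$; (iii) handle $R^{\mathrm{diag}}$ via telescoping $R^{\mathrm{diag}}=\sum_k c_k(E_{kk}-E_{00})$ with $\sum c_k=0$, rearranged into finitely many commutators using a weighted shift $W$ (so $R^{\mathrm{diag}}=[W, \cdot\,]$-type); (iv) handle $R^{\mathrm{off}}$ similarly by commuting $R$-like operators with a fixed smoothing shift; (v) verify all factors have smooth kernels with Schwartz-type decay in the eigenfunction expansion, hence lie in $\CL^{-\infty}(M)$. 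Since the excerpt labels this as Guillemin's Theorem and states it is being \emph{used} rather than reproved, I would in practice simply invoke \cite[Thm.\ A.1]{Gui:RTFIO} and only sketch the above as motivation, noting the analogy with Lemma \plref{symbolsumofder}(2) already flagged in the text.
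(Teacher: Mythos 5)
The paper does not give its own proof of this theorem: it is stated with a citation to Guillemin \cite[Thm.\ A.1]{Gui:RTFIO} and used as a black box throughout Section \ref{s:OSC} and in Proposition \ref{p:PreHyperTrace}. Your final sentence — that in practice you would simply invoke Guillemin's result — is therefore exactly what the authors do, and in that sense the proposal matches the paper.

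Your accompanying heuristic sketch of why the result is true is in the right spirit, and you correctly flag the one genuine difficulty: the naive matrix-algebra decomposition produces an \emph{infinite} sum of rank-one commutators $R_{kl}[E_{k0},E_{0l}]$, and the content of Guillemin's theorem is that these can be regrouped into \emph{finitely many} commutators whose factors are still smoothing (i.e.\ have smooth kernels with the right decay, not merely Hilbert--Schmidt or trace class). A small caution on step (iii): a telescoping sum $\sum_k c_k(E_{kk}-E_{00})$ is not itself a commutator, and to realize the traceless diagonal part as $[W^*,BW]$ for a weighted shift $W$ one must solve a recursion for the diagonal entries of $B$ whose growth is controlled by the decay rate of the weights of $W$; if $W$ decays too fast, $B$ blows up. Balancing these two decay rates is the real analytic work, and your sketch gestures at it but does not carry it out. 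Since neither you nor the paper intends to reprove Guillemin's theorem, this gap is acceptable as long as the heuristic is understood to be motivation rather than a proof.
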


Can we write $J$ as a sum of commutators of general pseudodifferential
operators? Since $\Res$ is up to constants the only trace on $\CL^\bullet(M)$
(for $M$ compact and connected of dimension $>1$) the answer is yes.
A more precise answer is the following: 

\begin{prop}[See Prop.\ 4.2 in \cite{Pon:TPO}]\label{p:SmoothCommutator1}
Let $M$ be a compact riemannian manifold of dimension $n>1$.
Then $\CL^{-\infty}(M)\subset [\CL^0(M),\CL^{-n+1}(M)]$.
\end{prop}

We present here a brief variant of the proof of Ponge; our proof is based on
\begin{lemma}\label{l:PDOkernel} Let $n\ge 2$.

\textup{1. } The operator $Q_j$ of convolution by the function
\[f_j(y):=\dfrac{y_j}{|y|^2}=\partial_{y_j}(\log|y|)\] 
is a classical pseudodifferential operator of order $-n+1$ on $\R^n$.

\textup{2. } For any smoothing operator $R\in \CL^{-\infty}(\R^n)$ there exist
$B_j\in \CL^{-n+1}(\R^n)$, $j=1,\ldots,n$, such that
$R=\sum\limits_{j=1}^n [\Op(x_j),B_j] $.
\end{lemma}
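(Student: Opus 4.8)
The plan is to prove the two parts of Lemma~\ref{l:PDOkernel} in turn, the first being a direct symbol computation and the second the real point.

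\textbf{Part 1.} The function $f_j(y)=y_j/|y|^2$ is homogeneous of degree $-n+1$ away from the origin and is locally integrable (since $-n+1>-n$), so convolution by $f_j$ is a well-defined operator. Its symbol is the Fourier transform $\widehat{f_j}(\xi)$, which by homogeneity considerations is homogeneous of degree $(-n+1)-(-n)\cdot(-1)$; more precisely the Fourier transform of a homogeneous distribution of degree $-n+1$ on $\R^n$ is homogeneous of degree $-1$, and since $f_j$ is odd and smooth away from $0$, $\widehat{f_j}$ is smooth away from $0$ and homogeneous of degree $-1$. (One can even write it explicitly: up to a constant, $\widehat{f_j}(\xi)=c\,\xi_j/|\xi|^2$, since $f_j=\partial_{y_j}\log|y|$ and the Fourier transform of $\log|y|$ is, up to constants and a delta at $0$, a constant times $|\xi|^{-n}$.) Adding a cutoff near $\xi=0$ to make it a genuine classical symbol of order $-n+1$, and checking that the correction is smoothing, shows $Q_j=\Op(\chi\cdot\widehat{f_j})$ modulo a smoothing operator is classical of order $-n+1$. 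Actually the cleanest phrasing: since $f_j$ is smooth away from the diagonal shift and behaves like a homogeneous kernel of the right degree, $Q_j$ is a classical $\Psi$DO of order $-n+1$ by the standard characterization of $\Psi$DOs via their Schwartz kernels (e.g.\ a kernel that is smooth off the diagonal and has a classical expansion in homogeneous terms there).

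\textbf{Part 2.} This is the main point. I want to produce, for given $R\in\CL^{-\infty}(\R^n)$, operators $B_j\in\CL^{-n+1}(\R^n)$ with $R=\sum_{j=1}^n[\Op(x_j),B_j]$. The key identity is that for a convolution operator $Q_j$ with kernel $f_j(x-y)$, the commutator $[\Op(x_j),Q_j]$ has kernel $(x_j-y_j)f_j(x-y)$, and
\[
\sum_{j=1}^n (x_j-y_j)\,f_j(x-y)=\sum_{j=1}^n \frac{(x_j-y_j)^2}{|x-y|^2}=1.
\]
So $\sum_j[\Op(x_j),Q_j]$ has kernel identically $1$ — formally the "operator" $u\mapsto \bigl(\int u\bigr)\cdot 1$, which is of course not smoothing (or even bounded), but this is the model computation. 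To fix it: given $R$ with Schwartz kernel $K_R(x,y)\in\mathcal S(\R^n\times\R^n)$ (or supported appropriately), set $B_j$ to be the operator with kernel $K_{B_j}(x,y)=f_j(x-y)\,g(y)$ where $g$ is chosen so that $\int K_R(x,y)\,?$... — more carefully, I would look for $B_j$ with kernel $f_j(x-y)h(x,y)$ for suitable $h$, engineer the commutator kernel $(x_j-y_j)f_j(x-y)h(x,y)$, sum to get $h(x,y)$, and then want $h(x,y)=K_R(x,y)$; but one must check $f_j(x-y)h(x,y)$ is the kernel of a $\CL^{-n+1}$ operator, which works provided $h$ is smooth and decays, since then $f_j(x-y)h(x,y)$ is a classical kernel of order $-n+1$ (product of the homogeneous kernel $f_j$ with a smooth amplitude). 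The delicate point is the behavior near $x=y$: there $h(x,y)=h(x,x)+O(|x-y|)$ and $(x_j-y_j)f_j(x-y)h=\frac{(x_j-y_j)^2}{|x-y|^2}h$ which sums to $h$ — good, no singularity. So taking $h(x,y)=K_R(x,y)$, $B_j:=$ operator with kernel $f_j(x-y)K_R(x,y)$, one checks $B_j\in\CL^{-n+1}(\R^n)$ (using Part 1 locally plus that $K_R$ is a smooth compactly supported — or Schwartz — amplitude) and $\sum_j[\Op(x_j),B_j]=R$ directly on kernels.

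\textbf{Main obstacle.} The one genuine thing to verify carefully is that $B_j$, whose kernel is $f_j(x-y)K_R(x,y)$, is indeed a \emph{classical} pseudodifferential operator of order exactly $-n+1$ and not merely some operator with a mildly singular kernel: one needs the kernel off the diagonal to be smooth (clear, as $f_j$ is smooth off $0$ and $K_R$ smooth) and near the diagonal to have the correct classical asymptotic expansion, which follows because $f_j(x-y)$ is a classical homogeneous kernel of order $-n+1$ in $x-y$ and multiplication by the smooth function $K_R(x,y)$ only modifies the lower-order terms of the expansion. I would also need to handle the cutoff/large-$\xi$ versus smoothing-remainder bookkeeping, exactly as in Part~1. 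Combined with Theorem~\ref{t:SmoothCommutator}, which handles the residual smoothing piece, and Part~1 providing the basic building block, this establishes Proposition~\ref{p:SmoothCommutator1} after transferring from $\R^n$ to $M$ via a partition of unity.
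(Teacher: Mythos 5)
Your Part~1 follows the same route as the paper (Fourier transform of the locally integrable homogeneous function $f_j$, a cut--off $\chi$ near $\xi=0$, and a Schwartz--convolution remainder), but your homogeneity bookkeeping is interchanged and, read literally, breaks the order computation: $f_j(y)=y_j/|y|^2$ is homogeneous of degree $-1$ (not $-n+1$; local integrability needs $-1>-n$, i.e.\ $n\ge 2$), and hence its Fourier transform is a homogeneous distribution of degree $-n-(-1)=-n+1$, smooth away from $0$ -- not of degree $-1$. With your stated degree $-1$ for $\widehat{f_j}$ the operator $\Op(\chi\widehat{f_j})$ would have order $-1$, contradicting the claim; the conclusion ``order $-n+1$'' is only recovered after swapping the degrees back. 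Likewise the parenthetical formula $\widehat{f_j}(\xi)=c\,\xi_j/|\xi|^2$ is correct only for $n=2$; in general $\widehat{f_j}(\xi)=c\,\xi_j/|\xi|^{n}$, coming from $\widehat{\log|\cdot|}=c'|\xi|^{-n}$ modulo a distribution supported at $0$. Once these degrees are fixed, the argument (H\"ormander's theorems on Fourier transforms of homogeneous distributions, $\chi\widehat{f_j}\in\CS^{-n+1}(\R^n)$, and convolution by the Schwartz function coming from $1-\chi$ being smoothing) is exactly the paper's proof.

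Your Part~2 is a correct but genuinely different decomposition. The paper writes $K_R(x,y)=K_R(x,x)+\sum_j(x_j-y_j)K_j(x,y)$, absorbs the diagonal--vanishing part into commutators $[\Op(x_j),R_j]$ with \emph{smoothing} $R_j$, and handles the diagonal part by $H_j$ with kernel $f_j(x-y)K_R(x,x)$, which is multiplication by the smooth function $x\mapsto K_R(x,x)$ composed with $Q_j$ and hence lies in $\CL^{-n+1}(\R^n)$ immediately by Part~1; then $B_j=R_j+H_j$. You instead take $B_j$ with kernel $f_j(x-y)K_R(x,y)$ outright, which does give $\sum_j[\Op(x_j),B_j]=R$ exactly, with no leftover term; the price is precisely the step you flag: showing that an operator with kernel $f_j(x-y)K_R(x,y)$ (a classical convolution kernel times a smooth amplitude depending on both variables) is classical of order $-n+1$. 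That is true, but it requires the amplitude--to--symbol reduction (Taylor expansion of $K_R(x,y)$ at $y=x$, or Shubin's theorem on amplitudes), and carrying it out essentially reproduces the paper's splitting of $K_R$ -- which is exactly why the paper performs that splitting first, so that membership in $\CL^{-n+1}(\R^n)$ follows from Part~1 plus composition with a multiplication operator, with no amplitude calculus needed. So your route works once that verification is written out, and your reduction of Proposition~\ref{p:SmoothCommutator1} to the lemma via cut--offs and Theorem~\ref{t:SmoothCommutator} matches the paper.
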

\begin{remark}\label{r:MultiplicationOp}
$\Op(x_j)$ is the pseudodifferential operator associated to the
symbol function $(x,\xi)\mapsto x_j$. Of course, this is nothing
but the operator of multiplication by the coordinate $x_j$.
Therefore, $\Op(x_j)$ commutes with multiplication operators, a fact
which will often be used below.
\end{remark}
\begin{proof} \textup{1. } We have 
$f_j \restriction \Rnnull\in\cP^{-1}(\Rnnull)$.
Since $f_j$ is locally integrable on $\R^n$, it defines a distribution in
$\sD'(\R^n)$ which is homogeneous of degree $-1$.  
Then by \textsc{H\"ormander}  \cite[Thm.\ 7.1.18 and 7.1.16]{Hor:ALPI}, $f_j\in\sS'(\R^n)$
and its Fourier transform $\hat{f_j}$ is a homogeneous distribution of 
degree $-n+1$ in $\R^n$ which is smooth in $\Rnnull$. 
With the cut--off function $\chi$ of Eq.\ \eqref{eq:CutOffFctn} we therefore have
$\chi \hat{f_j}\in\CS^{-n+1}(\R^n)$. Furthermore, $(1-\chi)$ is compactly
supported and thus $(1-\chi)=\hat{\psi}$ with $\psi\in\sS(\R^n)$.
For $u\in \cinfz{\R^n}$, the space of compactly supported smooth functions on
$\R^n$, we now have
\[
 Q_j u   = f_j*u = \Op(\chi \hat{f_j})u + (\psi *f_j)*u.
\]
Convolution by the Schwartz function 
$\psi *f_j$ is smoothing and thus $Q_j\in\CL^{-n+1}(\R^n)$.

\textup{2. } A smoothing operator $R$ has a smooth kernel $K_R(x,y)$, and therefore, 
$(x,y)\mapsto K_R(x,y)-K_R(x,x)$ is smooth and vanishes on the diagonal. 
It follows that there are smooth functions $K_1,\ldots,K_n$ such that 
\[K_R(x,y)=K_R(x,x)+\sum_{j=1}^n(x_j-y_j)K_j(x,y).\] 
Let $Q$ be the operator defined by the kernel $K_Q(x,y)=K_R(x,x),$
and let $R_j$ be the smoothing operators defined by the kernels $K_j(x,y)$, then 
\[R=Q+\sum_{j=1}^n[\Op(x_j),R_j].\] 
Let $H_j$ be the operator with kernel $(x,y)\mapsto f_j(x-y) K_R(x,x).$
$H_j$ is $Q_j$ followed by multiplication by the smooth function $x\mapsto
K_R(x,x)$ and is therefore, by the proved part 1., a classical
pseudodifferential operator of order $-n+1$. Since 
\begin{align*}
 \sum_{j=1}^n(x_j-y_j) f_j(x-y)K_R(x,x)
   &=\sum_{j=1}^n\dfrac{(x_j-y_j)^2}{|x-y|^2}K_R(x,x) \\ 
   &=K_R(x,x) =K_Q(x,y),
\end{align*} 
it follows that $Q=\sum\limits_{j=1}^n[\Op(x_j),H_j].$
The result of the lemma follows with $B_j:=R_j+H_j\in \CL^{-n+1}(\R^n)$.
\end{proof}

\begin{proof}[Proof of Proposition \plref{p:SmoothCommutator1}]
Let $U\subseteq\R^n$ be an open set and let $R\in \CL^{-\infty}_{\comp}(U)$ 
be a smoothing operator with compactly supported Schwartz kernel 
$K_R\in \cinfz{U\times U}$. Let $\psi\in \cinfz{U}$ be such that 
$\psi(x)\psi(y)=1$ in a neighborhood of the support of the kernel of $R$, 
then $\psi R\psi=R$.

By Lemma \ref{l:PDOkernel} there exist $P_i\in \CL^{-n+1}(U)$ such that 
$R=\sum\limits_{i=1}^n[\Op(x_i),P_i]$. 
Let $\chi\in \cinfz{U}$ be such that $\chi=1$ in a neighborhood of
$\supp(\psi)$. Then we have 
\[\psi[\Op(x_i),P_i]\psi=
   \Op(x_i)\chi\psi P_i\psi-\psi P_i\psi\Op(x_i)\chi=[\Op(x_i)\chi,\psi P_i\psi],\] 
thus
\begin{equation}\label{smoothingU}
 R=\sum_{i=1}^n[\Op(x_i\chi),\psi P_i\psi].
\end{equation} 
Note that $x_i\chi\in \cinfz{U}$ and $\psi P_i\psi
\in\CL^{-n+1}_{\comp}(U)$.

Now let $\{\varphi_j\}\subset \cinf{M}$ be a partition of unity subordinate 
to a finite open covering $\{U_j\}$ of $M$ by coordinate charts. 
Furthermore, choose $\psi_j\in \cinfz{U_j}$ such that $\psi_j=1$ in
a neighborhood of $\supp(\varphi_j)$. 
Then for any $R\in \CL^{-\infty}(M)$ we have 
\begin{equation}\label{smoothingM}
 R=\sum_{j=1}^N\varphi_jR\psi_j+\sum_{j=1}^N\varphi_jR(1-\psi_j).
\end{equation} 
For each index $j$ the operator $\varphi_jR\psi_j$ belongs to 
$\CL^{-\infty}_{\comp}(U_j)$, so by the previous argument 
it can be written as a sum of commutators of the form \eqref{smoothingU}. 
Moreover, the operator 
$S:=\sum\limits_{j=1}^N\varphi_jR(1-\psi_j)$ is smoothing and its 
Schwartz kernel vanishes on the diagonal, so its 
trace vanishes and by Theorem \ref{t:SmoothCommutator} it can be written as a 
sum of commutators in 
$[\CL^{-\infty}(M),\CL^{-\infty}(M)]$. 
Hence $R$ belongs to the space $[\CL^0(M),\CL^{-n+1}(M)]$ as claimed.
\end{proof}

The degrees $0$ and $-n+1$ in the commutator $[\CL^0(M),\CL^{-n+1}(M)]$ in
Proposition \ref{p:SmoothCommutator1} can be traded against each other as the 
following simple but very useful Lemma due to Sylvie Paycha shows. This Lemma
is included with her kind permission.

\begin{lemma}\label{l:commCl02aaa}
For any $\ga,\gb\in\R$ 
\[[\CL^0(M),\CL^{\ga+\gb}(M)]\subset [\CL^{\ga}(M),\CL^{\gb}(M)],\] 
meaning that any commutator in $[\CL^0(M),\CL^{\ga+\gb}(M)]$ can be written as a 
sum of commutators in $[\CL^{\ga}(M),\CL^{\gb}(M)]$. 
\end{lemma}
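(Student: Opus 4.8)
The plan is to reduce to a single commutator and then exhibit an explicit algebraic identity built from an invertible elliptic operator of order $\ga$. By linearity it suffices to show that a single commutator $[A,B]$ with $A\in\CL^0(M)$ and $B\in\CL^{\ga+\gb}(M)$ can be written as a sum of commutators from $[\CL^{\ga}(M),\CL^{\gb}(M)]$. Fix once and for all an invertible elliptic classical pseudodifferential operator $\gL\in\CL^{\ga}(M)$ whose inverse satisfies $\gL^{-1}\in\CL^{-\ga}(M)$; for instance one may take $\gL=(1+\Delta)^{\ga/2}$, a real power of the Laplacian, whose classicality and the classicality of its inverse are standard (Seeley's theorem on complex powers).

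The key step is then the purely algebraic identity
\[
 [A,B]=[A\gL,\gL^{-1}B]+[\gL^{-1}BA,\gL],
\]
which one checks by expanding the two commutators on the right: $[A\gL,\gL^{-1}B]=AB-\gL^{-1}BA\gL$ and $[\gL^{-1}BA,\gL]=\gL^{-1}BA\gL-BA$, so the two copies of $\gL^{-1}BA\gL$ cancel and the sum equals $AB-BA$. It remains to do the order bookkeeping: $A\gL\in\CL^{\ga}(M)$ and $\gL^{-1}B\in\CL^{\gb}(M)$, so the first term lies in $[\CL^{\ga}(M),\CL^{\gb}(M)]$; similarly $\gL^{-1}B\in\CL^{\gb}(M)$ and $A\in\CL^0(M)$ give $\gL^{-1}BA\in\CL^{\gb}(M)$, while $\gL\in\CL^{\ga}(M)$, so the second term lies in $[\CL^{\gb}(M),\CL^{\ga}(M)]=[\CL^{\ga}(M),\CL^{\gb}(M)]$ by antisymmetry of the bracket. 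This proves the claim.

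There is essentially no obstacle of substance in this argument: once the identity above is written down, everything is formal. The only non-algebraic input is the existence of an invertible elliptic operator of prescribed real order $\ga$ on a closed manifold, which is a classical fact; the $L^2$-structure plays no role and one could equally well argue with densities.
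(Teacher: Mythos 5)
Your proof is correct and follows essentially the same route as the paper: fix an invertible elliptic operator of order $\ga$ (the paper takes $\Lambda^{\ga}$ for a first order positive definite elliptic $\Lambda\in\CL^1(M)$) and conclude by a purely algebraic commutator identity with the obvious order bookkeeping. In fact your identity $[A,B]=[A\gL,\gL^{-1}B]+[\gL^{-1}BA,\gL]$ is a slight streamlining: it consists of exactly the first and fourth of the paper's four commutators \eqref{com1AB}--\eqref{com4AB}, which already sum to $[A,B]$ rather than $2[A,B]$.
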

\begin{proof}
 Let $A\in\CL^{0}(M)$, $B\in\CL^{\ga+\gb}(M)$. Fix a first order positive definite
 elliptic operator $\Lambda\in\CL^1(M)$. Then
$A \Lambda^\ga,\Lambda^{\ga}A, \Lambda^\ga \in\CL^\ga(M)$,
$B\Lambda^{-\ga}, \Lambda^{-\ga}B$, $AB \Lambda^{-\ga}, \Lambda^{-\ga}BA \in\CL^\gb(M)$.
Moreover,
\begin{align}
&[A \Lambda^{\ga}, \Lambda^{-\ga}B]= AB - \Lambda^{-\ga}BA\Lambda^{\ga},   \label{com1AB}\\
&[\Lambda^{\ga}A,  B\Lambda^{-\ga}]=\Lambda^{\ga}AB \Lambda^{-\ga}-BA,    \label{com2AB}\\
&[AB\Lambda^{-\ga}, \Lambda^{\ga}] =AB-\Lambda^{\ga}AB \Lambda^{-\ga},     \label{com3AB}\\
&[\Lambda^{-\ga}BA, \Lambda^{\ga}]= \Lambda^{-\ga}BA \Lambda^{\ga}-BA.     \label{com4AB}
\end{align} 
Adding up \eqref{com1AB}--\eqref{com4AB} yields twice the commutator $[A,B]$, 
whence $[A,B]\in [\CL^\ga(M),\CL^\gb(M)]$.
\end{proof}


\subsection{General classical pseudodifferential operators}\label{ss:GeneralPDO}               

We now combine the main result of Subsection \ref{ss:SympCones},
Theorem \ref{t:HomFuncPoissonBrack}, and the results of the previous
Subsection to obtain statements about general pseudodifferential
operators as sums of commutators. This improves, for classical
pseudodifferential operators, \cite[Prop.\ 4.7 and Prop.\ 4.9]{Les:NRP};
for such operators these results in fact go back to
\cite{Wod:LISA}. In \cite{Les:NRP} the more general class
of pseudodifferential operators with $\log$--polyhomogeneous symbol expansions
was considered.

\begin{theorem}\label{t2}
Let $M$ be a compact connected riemannian manifold of dimension $n>1$.
Fix $Q\in \CL^{-n}(M)$ with $\Res(Q)=1$. 
Then for any real numbers $m, a$ there 
exist $P_1,\ldots,P_N\in \CL^{m}(M)$, such that for any $A\in \CL^{a}(M)$ 
there exist $Q_1,\ldots,Q_N\in \CL^{a-m+1}(M)$ 
and $R\in \CL^{-\infty}(M)$ such that 
\begin{equation} 
 A=\sum_{j=1}^N[P_j,Q_j] + \Res(A)\, Q + R. 
\end{equation}
\end{theorem}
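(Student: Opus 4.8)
The plan is to prove the representation by induction on the difference in orders, using the symbol calculus to reduce one homogeneous layer at a time and then invoking Theorem \ref{t:HomFuncPoissonBrack} to realize each layer as a Poisson bracket, hence (after lifting via $\Op$) as a commutator modulo lower order. First I would fix the elliptic auxiliary operator $\Lambda\in\CL^1(M)$, positive definite, and note that it suffices to prove the statement for one convenient value of $m$; indeed, if $A=\sum_j[P_j,Q_j]+\Res(A)Q+R$ with $P_j\in\CL^{m}(M)$ and $Q_j\in\CL^{a-m+1}(M)$, then writing $[P_j,Q_j]=[P_j\Lambda^{m'-m},\Lambda^{m-m'}Q_j]+[\text{lower order correction}]$ — or, more cleanly, applying Lemma \ref{l:commCl02aaa} — one trades $m$ for any other real $m'$. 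So the heart of the matter is a single well-chosen case, and I would aim to reduce everything to commutators of the form $[\CL^0(M),\CL^{a+1}(M)]$ plus the smoothing remainder, which is exactly the shape Lemma \ref{l:commCl02aaa} and Proposition \ref{p:SmoothCommutator1} are designed to absorb.

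\textbf{The inductive step.} Given $A\in\CL^a(M)$ with leading symbol $\sigma_a(A)\in\cP^a(T^*M\setminus M)$, I distinguish two cases according to whether $a=-n$. If $a\neq -n$, then $\res_Y(\sigma_a(A))=0$ automatically, and if $a=-n$ I first subtract $\Res(A)\,Q$, whose leading symbol (up to the normalization $(2\pi)^{-n}$ absorbed into $Q$) matches that of $A$ in symplectic residue, so that $\sigma_{-n}(A-\Res(A)Q)$ lies in $\ker(\res_Y)$. In either case Theorem \ref{t:HomFuncPoissonBrack} (with $l=m$ and the second argument of degree $a-m+1$, so that $l+m-1=a$) yields homogeneous functions $g_1,\dots,g_N\in\cP^m$ and $f_1,\dots,f_N\in\cP^{a-m+1}$ with $\sigma_a(A)=\sum_j\{g_j,f_j\}$ (after the harmless correction by $\Res(A)Q$ when $a=-n$). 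Here I would pick the $g_j$ once and for all, independently of $A$ — this is the point where the $P_j$ get fixed, as $P_j:=\Op(g_j)\in\CL^m(M)$ — and let only the $f_j$, hence $Q_j:=\Op(f_j)\in\CL^{a-m+1}(M)$, depend on $A$. By the symbol identity \eqref{eq:108}, $\sigma_a\bigl(\sum_j[P_j,Q_j]\bigr)=\tfrac1i\sum_j\{g_j,f_j\}=\tfrac1i\sigma_a(A)$, so rescaling the $f_j$ by $i$ gives $A-\sum_j[P_j,Q_j]-\Res(A)Q\in\CL^{a-1}(M)$.

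\textbf{Iteration and termination.} Now I would iterate: the new operator $A^{(1)}:=A-\sum_j[P_j,Q_j]-\Res(A)Q$ has order $a-1$, and its residue is $0$ (the residue of a commutator vanishes, and $\Res(Q)=1$ was used to kill the residue of $A$). Applying the same construction with the \emph{same} fixed family of $P_j$'s but now of effective order $m$ acting against $Q_j^{(1)}\in\CL^{a-1-m+1}(M)=\CL^{a-m}(M)$, one peels off another homogeneous layer, and so on. Since $\Res(A^{(k)})=0$ for $k\geq1$, no further $Q$-terms appear; after infinitely many steps one must use asymptotic summation (Borel's lemma / \cite[Prop.\ 3.5]{Shu:POST}) to replace the formal series $\sum_k Q_j^{(k)}$ by genuine operators $Q_j\in\CL^{a-m+1}(M)$ whose full symbols match, leaving a smoothing remainder $R_0\in\CL^{-\infty}(M)$. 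Finally, by Proposition \ref{p:SmoothCommutator1}, $R_0\in[\CL^0(M),\CL^{-n+1}(M)]$, and Lemma \ref{l:commCl02aaa} (or a direct rescaling by $\Lambda$) rewrites this as a sum of commutators in $[\CL^m(M),\CL^{a-m+1}(M)]$ — absorbing the extra indices into the existing sum by enlarging $N$ and padding with $P_j$'s of the form $\Lambda^{m}$ times multiplication operators — which yields the asserted form with a genuinely smoothing $R$ (we may even take $R=0$, but it is harmless to keep it). The main obstacle I anticipate is bookkeeping: ensuring that the family $P_1,\dots,P_N$ can be chosen \emph{uniformly in $A$} and that it simultaneously serves every homogeneous layer of every order below $a$; this works because Theorem \ref{t:HomFuncPoissonBrack} lets one fix the $g_j$ so that their Hamiltonian vector fields span $\Lambda^{2n-1}T^*_yY$ pointwise, a condition independent of the degree of the function being represented, but the case $m=0$ (where the $g_j$ can only be chosen to span $T^*Z$, not $T^*Y$) requires the slightly more delicate second half of the proof of Theorem \ref{t:HomFuncPoissonBrack}, so I would either restrict attention to $m\neq0$ and then invoke Lemma \ref{l:commCl02aaa} to reach $m=0$, or carry the Liouville-field correction through the induction directly.
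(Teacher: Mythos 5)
Your middle two paragraphs reproduce the paper's argument almost exactly: fix $p_1,\ldots,p_N\in\cP^m(T^*M\setminus M)$ once and for all with differentials spanning $T^*(T^*M\setminus M)$ (respectively $T^*(S^*M)$ when $m=0$), lift to $P_j\in\CL^m(M)$, strip one homogeneous layer of $A$ at a time by feeding the leading symbol into Theorem \ref{t:HomFuncPoissonBrack} and its proof, and finish by asymptotic summation, leaving a smoothing remainder $R$. The $m=0$ case is handled by your \emph{second} option (carrying the Liouville-direction degeneracy through, exactly as in case 2 of Theorem \ref{t:HomFuncPoissonBrack}'s proof), and this matches the paper.

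However, two of your surrounding remarks would, if actually carried out, contradict the theorem's quantifier order $\exists\,P_j\ \forall\,A\ \exists\,Q_j,R$, i.e.\ the requirement that $P_1,\ldots,P_N$ be chosen \emph{before} $A$. Lemma \ref{l:commCl02aaa} produces commutators whose first factors are $A\Lambda^\alpha$, $\Lambda^\alpha A$, $AB\Lambda^{-\alpha}$, $\Lambda^{-\alpha}BA$; applied to a decomposition $A=\sum_j[P_j,Q_j]+\cdots$, the new first factors depend on the $Q_j$ and hence on $A$. This rules out both your initial reduction to a single convenient $m$ and your fallback of reaching $m=0$ from $m\neq 0$ via Lemma \ref{l:commCl02aaa}; both must be dropped (and they are not needed, since your inductive argument works for every $m$ directly). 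Similarly, the parenthetical claim that one may take $R=0$ fails: absorbing $R$ via Proposition \ref{p:SmoothCommutator1} requires the inclusion $\CL^{-n+1}(M)\subset\CL^{a-m+1}(M)$, i.e.\ $a-m+n\in\Z_{\ge 0}$, which need not hold for arbitrary real $m,a$, and in any case would introduce new $P_j$'s depending on $A$. Since the statement allows $R\in\CL^{-\infty}(M)$, neither issue is fatal, but the shortcuts do not work as stated; the correct path is the one in your middle paragraphs, stopping once $R$ is smoothing.
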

\begin{proof} 
 We follow the proof of \cite[Prop.\ 4.7]{Les:NRP}, where the case $m=1$ is 
 discussed, with a few modifications and improvements.

 First, replacing $A$ by $A-\Res(A)\, Q$ if necessary, we may,
 without loss of generality, assume that $\Res(A)=0$. 

 We choose $p_1,\ldots,p_N\in \cP^{m}(T^*M\setminus M)$ such that their
differentials span the cotangent bundle of $T^*M\setminus M$ at every point 
if $m\not=0$; if $m=0$ we choose the $p_j$ such that their differentials 
restricted to $S^*M$ span the cotangent bundle of 
$S^*M$ (cf.\ the proof of Theorem \ref{t:HomFuncPoissonBrack}). Choose
$P_j\in\CL^m(M)$ with leading symbols $p_j$.
Consider the leading symbol 
$\sigma_a(A)\in\cP^a(T^*M\setminus M)$ of $A$. 
Its symplectic residue is 0 if $a\not=-n$, and if $a=-n$ it is up to a normalization 
equal to $\Res(A)$ (cf.\ e.g.\ \cite[Prop.\ 4.5]{Les:NRP}), hence it is also $0$ in that case.

Then by Theorem \ref{t:HomFuncPoissonBrack} and its proof
there are $q_j^{(1)}\in \cP^{a-m+1}(T^*M\setminus M)$ such that
$\sigma_a(A)=\frac 1i \suml_{j=1}^N \{p_j,q_j^{(1)}\}$.
Thus choosing $Q_j^{(1)}\in\CL^{a-m+1}(M)$ with leading symbol
$q_j^{(1)}$ we find, see Eq.\ \eqref{eq:108},
\begin{equation*}
 A^{(1)}=A-\sum_{j=1}^N[P_j,Q_j^{(1)}]\in \CL^{a-1}(M). 
\end{equation*} 
We iterate the procedure: inductively, assume that we have operators
$Q_j^{(l)}\in\CL^{a-m+1}(M)$, $1\le l\le l_0$, such that
\[
 A^{(l)}=A-\sum_{j=1}^N[P_j,Q_j^{(l)}]\in \CL^{a-l}(M)
\]
and 
\begin{equation}\label{eq:133}  
     Q_j^{(l)}-Q_j^{(l+1)}\in\CL^{a-m+1-l}(M), \quad 1\le l\le l_0-1.
\end{equation} 
As for $A$ we then choose $B_j\in\CL^{a-m-l_0+1}(M)$ such that
\[
 A^{(l_0+1)}=A^{(l_0)}-\sum_{j=1}^N[P_j,B_j]\in \CL^{a-l_0-1}(M).
\]
Now put $Q_j^{(l_0+1)}=Q_j^{(l_0)}+B_j$. Then \eqref{eq:133}
holds for all $l$ and we can invoke the asymptotic summation principle
\cite[Prop.\ 3.5]{Shu:POST} and choose 
$Q_j\in\CL^{a-m+1}(M)$ such that for all $l\in\N$, $Q_j-Q_j^{(l)}\in \CL^{s-m+1-l}(M)$.
Then
\[
 A-\sum_{j=1}^N[P_j,Q_j]\in \CL^{-\infty}(M).\qedhere
\]
\end{proof}

Combining Theorem \ref{t2} and Lemma \ref{l:commCl02aaa} we find

\begin{theorem}\label{t3} Under the assumptions of Theorem \plref{t2} let
$a\in \Z, -n\le a< 0$. Then
\begin{align}
      \CLa &  =  [\CL^{(a+1)/2}(M),\CL^{(a+1)/2}(M)] \oplus \C \cdot
      Q,\label{eq:t31}\\
      &  =  [\CL^0(M),\CL^{a+1}(M)]    \oplus \C \cdot Q.\label{eq:t32}
\end{align}      
In other words for $A\in \CLa$
there exist operators $P_1,\ldots,P_N,Q_1,\ldots,Q_N\in \CL^{(a+1)/2}(M)$
resp. $P_1,\ldots,P_N\in\CL^0(M)$, $Q_1,\ldots,Q_N\in\CL^{a+1}(M)$ such that 
\begin{equation} 
 A=\sum_{j=1}^N[P_j,Q_j] + \Res(A)\, Q. 
\end{equation} 
\end{theorem}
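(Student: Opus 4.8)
The plan is to deduce Theorem~\ref{t3} directly from the three preceding results: the inductive commutator representation of Theorem~\ref{t2}, the smoothing-operator statement of Proposition~\ref{p:SmoothCommutator1}, and Lemma~\ref{l:commCl02aaa}. As a preliminary I would record that both right-hand sides are subspaces of $\CLa$: for operators with scalar coefficients a commutator lowers the order by one, so $[\CL^{(a+1)/2}(M),\CL^{(a+1)/2}(M)]\subseteq\CL^{2\cdot(a+1)/2-1}(M)=\CLa$ and $[\CL^0(M),\CL^{a+1}(M)]\subseteq\CL^{0+(a+1)-1}(M)=\CLa$; moreover $Q\in\CL^{-n}(M)\subseteq\CLa$, using $a\in\Z$ and $a+n\ge 0$.

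For the surjectivity half I would apply Theorem~\ref{t2} to a given $A\in\CLa$ with two choices of $m$. Taking $m=0$ produces universal operators $P_1,\ldots,P_N\in\CL^0(M)$ and, for this $A$, operators $Q_1,\ldots,Q_N\in\CL^{a+1}(M)$ and $R\in\CL^{-\infty}(M)$ with $A=\sum_j[P_j,Q_j]+\Res(A)\,Q+R$; taking $m=(a+1)/2$ produces instead $P_j,Q_j\in\CL^{(a+1)/2}(M)$, since $a-m+1=(a+1)/2$, with a remainder of the same shape. In both cases the only term not yet in the desired form is the smoothing operator $R$. Here Proposition~\ref{p:SmoothCommutator1} gives $R\in\CL^{-\infty}(M)\subseteq[\CL^0(M),\CL^{-n+1}(M)]$, and since $\CL^{-n+1}(M)\subseteq\CL^{a+1}(M)$ (again $a+n\in\Z_{\ge 0}$) we get $R\in[\CL^0(M),\CL^{a+1}(M)]$, which establishes \eqref{eq:t32}. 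For \eqref{eq:t31} I would then push $R$ one step further with Lemma~\ref{l:commCl02aaa}, applied with $\ga=\gb=(a+1)/2$, giving $[\CL^0(M),\CL^{a+1}(M)]\subseteq[\CL^{(a+1)/2}(M),\CL^{(a+1)/2}(M)]$; hence $R$, and therefore $A-\Res(A)\,Q$, lies in that commutator space.

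Directness is the easy part. The residue trace $\Res$ is both a pretrace and a hypertrace on $\CL^{a+1}(M)$ (as recorded at the end of Subsection~\ref{ss:Traces}), so by the pretrace property it annihilates $[\CL^{(a+1)/2}(M),\CL^{(a+1)/2}(M)]$ and by the hypertrace property it annihilates $[\CL^0(M),\CL^{a+1}(M)]$. Since $\Res(Q)=1$, no nonzero multiple of $Q$ can lie in either commutator space, so the two sums are direct; and applying $\Res$ to any representation $A=\sum_j[P_j,Q_j]+cQ$ forces $c=\Res(A)$, which is the concluding displayed identity.

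I do not expect a genuine obstacle: essentially all the analytic content has already been packed into Theorem~\ref{t:HomFuncPoissonBrack}, Theorem~\ref{t2} and Proposition~\ref{p:SmoothCommutator1}. The one thing demanding care is the order bookkeeping --- checking that $a-m+1=(a+1)/2$ when $m=(a+1)/2$, that the inclusions $\CL^{-n}(M)\subseteq\CLa$ and $\CL^{-n+1}(M)\subseteq\CL^{a+1}(M)$ hold precisely because $a\in\Z$ with $a\ge -n$, and that the scalar-coefficient drop of order in commutators is what places the commutator spaces inside $\CLa$ --- together with being sure that $\Res$ is invoked only on commutator spaces where its pretrace and hypertrace vanishing genuinely apply.
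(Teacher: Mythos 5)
Your proof is correct and follows essentially the same route as the paper: Theorem \ref{t2} with $m=0$ resp.\ $m=(a+1)/2$, Proposition \ref{p:SmoothCommutator1} to absorb the smoothing remainder into $[\CL^0(M),\CL^{a+1}(M)]$, and Lemma \ref{l:commCl02aaa} to pass to $[\CL^{(a+1)/2}(M),\CL^{(a+1)/2}(M)]$ --- your application of the lemma with $\ga=\gb=(a+1)/2$ is a harmless variant of the paper's choice $\ga=(a+1)/2$, $\gb=-n+1-\ga$ followed by $\CL^{\gb}(M)\subset\CL^{\ga}(M)$. Your explicit check of directness via $\Res$ (which vanishes on both commutator spaces while $\Res(Q)=1$) is a small addition that the paper leaves implicit.
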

\begin{proof} Apply Theorem \ref{t2} with $m=(a+1)/2$ (resp.\ $m=0$).
 This yields $P_1,\ldots,P_{N'}$ in $\CL^{(a+1)/2}(M)$ (resp.\ $\CL^0(M)$),
 $Q_1,\ldots,Q_{N'}\in \CL^{(a+1)/2}(M)$ (resp.\ $\CL^{a+1}(M)$) and
 $R\in \CLsm$ such that
 \[
 A=\sum_{j=1}^{N'}[P_j,Q_j] + \Res(A)\, Q +R. 
 \]
 By Proposition \ref{p:SmoothCommutator1} we have 
 \[
 \CLsm\subset [\CL^0(M),\CL^{-n+1}(M)]\subset [\CL^0(M),\CL^{a+1}(M)]
 \]
 and hence there are $P_{N'+1},\ldots,P_N\in\CL^0(M)$ and
 $Q_{N'+1},\ldots,Q_N\in \CL^{a+1}(M)$ such that
 $R=\suml_{j={N'+1}}^N [P_j,Q_j]$  proving Eq.\ \eqref{eq:t32}.

 To prove Eq.\ \eqref{eq:t31} we apply Lemma \ref{l:commCl02aaa}
 with $\ga=(a+1)/2, \gb=-n+1-\ga$. Then $\ga-\gb=a+n\in\Z_{\ge 0}$,
 hence $\CL^{\gb}(M)\subset \CL^\ga(M)$ and we find
 \[
 \begin{split}
 R  &\in \CLsm\subset [\CL^0(M),\CL^{-n+1}(M)]\\
    &\subset [\CL^\ga(M),\CL^\gb(M)] 
     \subset[\CL^{(a+1)/2}(M),\CL^{(a+1)/2}(M)].\qedhere
\end{split}
\]
\end{proof}


\subsection{Classification of traces on $\CL^{a}(M)$}\label{s:ClassTrac}               

We are now going to classify the pretraces and the hypertraces on $\CLa$ for 
all $a\in\R$, as well as the traces on $\CLa$ for $a\in\Z_{\leq0}$. The following 
definition will be convenient:

\begin{dfn}\label{def:TRRes}
Recall that for a linear functional $\tau:\CL^b(M)\to\C$ and 
$\CLa\subset\CLb$ we abbreviate $\tau_a:=\tau\restriction \CLa$.

We fix once and for all, a \emph{linear} functional  
$\widetilde{\Tr}:\CL^0(M)\to\C$
such that for $a\in\Z_{<-n}$ 
\[\Trt\restriction\CLa=\Tr\restriction\CLa,\]
cf.\ Definition \plref{def:PreHyperTrace}. Furthermore put 
\begin{equation}\label{eq:134}  
     \TRb_a:= 
\begin{cases}
       \TR_a, & \textup{if }a\in\R\setminus \Z_{\ge -n},\\
       \Trt_a, & \textup{if } a\in \Z, -n\le a < \frac{-n+1}{2},\\
       \Res_a, & \textup{if } a\in \Z, \frac{-n+1}{2}\le a.
\end{cases}       
\end{equation}
\end{dfn}

$\TRb_a$ conveniently combines the Kontsevich--Vishik trace and the residue
trace. The notation is slightly abusive since for $a,b\in \Z, a<(-n+1)/2\le b$
one has 
$\TRb_b\restriction \CL^{2a-1}(M)=\Res \restriction \CL^{2a-1}(M)=0
\not=\Tr\restriction \CL^{2a-1}(M)= \TRb_{2a-1}$. The disadvantages of this
notational conflict are outweighed by the convenience of having a common
notation for the Kontsevich--Vishik trace and the residue trace. This will
free us from repetitively having to make a distinction between the cases
$a\in\R\setminus \Z_{>-n}$ and $a\in\Z_{>-n}$.

We also emphasize that the choice of $\Trt$ is not canonical but certainly possible.

\begin{prop}\label{p:PreHyperTrace} Let $a\in\R$.
 
 \textup{1. }
 Any pretrace on $\CLa$ is a hypertrace on $\CLa$.
 
 \textup{2.}
 If $\tau$ is a hypertrace on $\CL^a(M)$ then there is a unique constant $\lambda\in\C$
 such that $\tau\restriction\CLsm=\lambda\, \Tr$.

 \textup{3.}
 If $a\in\Z_{\le 0}$ and $\tau$ is a trace on $\CL^{a}(M)$ then
 $\tau\restriction \CL^{2a}(M)$ is a pretrace (and hence a hypertrace). 
 Conversely, given a pretrace on $\CL^{2a}(M)$, any linear extension 
 $\widetilde \tau$ of $\tau$ to $\CL^{a}(M)$ is a trace.
 
 \textup{4.}
 For $a\in\Z_{\le 0}$, $\TRb_a$ is a trace on $\CLa$.
 For $a\in \R\setminus\bigl(\Z\cap [-n+1,-n/2]\bigr)$ it is a pretrace (and hence a hypertrace).
\end{prop}
\begin{proof} 1. follows from Lemma \ref{l:commCl02aaa}.

2. follows from Theorem \ref{t:SmoothCommutator}.

3. is obvious.

4. For $\frac{-n+1}{2}\le a$ the claim follows from the properties of the
residue trace.

Except for $a=-n$ the fact that $\TRb_a$ is a pretrace follows since
$\Res_a$ and $\TR_a$ are pretraces.

Next consider $a\in\R$, $a<\frac{-n+1}{2}$. Then for $A,B\in\CLa$ it follows from Lemma 
\ref{afirmleq-n} that $[A,B]\in\CL^{2a-1}(M)$ is of trace--class and that 
\[\TRb_{2a-1}\bigl([A,B]\bigr)=\Tr\bigl([A,B]\bigr)=0.\] 
This proves the remaining claims under 4.
\end{proof}

Thus to classify traces on $\CLa$ (for $a\in\Z_{\le0}$) it suffices to classify
pretraces on $\CL^{2a}(M)$. And to classify pretraces on $\CLb$ (for any $b\in\R$!)
it suffices to classify hypertraces on $\CL^{b}(M)$.

The following considerably improves a uniqueness result 
by \textsc{Maniccia}, \textsc{Schrohe}, and \textsc{Seiler} \cite{MSS:UKVT}.
    
\begin{theorem}\label{t4} Let $M$ be a closed connected riemannian manifold
 of dimension $n>1$, $a\in\R\setminus \Z_{> -n}$ and let $\tau$ be a hypertrace
 on $\CLa$. Then there are uniquely determined $\gl\in\C$ and a  distribution
 $T\in\bigl(\cinf{S^*M}\bigr)^*$
 such that $\tau=\gl\, \TRb_a + T\circ\, \sigma_a$.

 Consequently, a linear functional on $\CLa$ is a hypertrace if and only
 if it is a pretrace.
\end{theorem}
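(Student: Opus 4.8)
The plan is to exploit the commutator representation from Theorem \ref{t2} to pin down a hypertrace $\tau$ on $\CLa$ up to its restriction to smoothing operators and to the leading symbol. First I would fix $Q\in\CL^{-n}(M)$ with $\Res(Q)=1$ and, applying Theorem \ref{t2} with the choice $m=0$, obtain operators $P_1,\dots,P_N\in\CL^0(M)$ such that every $A\in\CLa$ can be written $A=\sum_j[P_j,Q_j]+\Res(A)\,Q+R$ with $Q_j\in\CL^{a+1}(M)$ and $R\in\CLsm$. Since $\tau$ is a hypertrace, $\tau\bigl([P_j,Q_j]\bigr)=0$ for each $j$ (this is exactly the hypertrace condition: $P_j\in\CL^0(M)$, $Q_j\in\CLa$ up to the convention that $\CL^{a+1}(M)$ maps in). Thus $\tau(A)=\Res(A)\,\tau(Q)+\tau(R)$. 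By Proposition \ref{p:PreHyperTrace}(2) there is $\gl\in\C$ with $\tau\restriction\CLsm=\gl\,\Tr$, so $\tau(R)=\gl\,\Tr(R)$. Now observe that $\TRb_a$ is itself a hypertrace (Proposition \ref{p:PreHyperTrace}(4), using $a\in\R\setminus\Z_{>-n}$ so that $a\notin\Z\cap[-n+1,-n/2]$ after noting the relevant integer range; if $a=-n$ one uses that $\TRb_{-n}=\Trt_{-n}$ agrees with $\Res$ on leading-symbol level and handles commutators via Lemma \ref{afirmleq-n}), and $\TRb_a\restriction\CLsm=\Tr$ for $a<-n$ while for $a=-n$ one checks directly. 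Replacing $\tau$ by $\tau-\gl\,\TRb_a$ we may therefore assume $\tau\restriction\CLsm=0$.

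Next I would show that such a $\tau$ (a hypertrace vanishing on $\CLsm$) factors through the leading symbol map $\sigma_a:\CLa\to\cP^a(T^*M\setminus M)\cong\cinf{S^*M}$, i.e.\ $\tau$ vanishes on $\CL^{a-1}(M)$. The point is that an element $B\in\CL^{a-1}(M)$ has leading symbol $\sigma_{a-1}(B)\in\cP^{a-1}$, and by Theorem \ref{t:HomFuncPoissonBrack} (applied with $l=0$, so $\cP^0$ differentials spanning, giving $\{\cP^0,\cP^{a-1}\}\supset\cP^{a-2}$ as needed, and noting $\res_Y$ vanishes on $\cP^{a-1}$ when $a-1\ne -n$) one can write $\sigma_{a-1}(B)=\tfrac1i\sum_j\{p_j,q_j\}$ with $p_j\in\cP^0$; lifting $p_j$ to $\CL^0(M)$ and $q_j$ to $\CL^{a-1}(M)$ and using \eqref{eq:108} we get $B-\sum_j[P_j,Q_j]\in\CL^{a-2}(M)$, and iterating plus asymptotic summation as in the proof of Theorem \ref{t2} shows $B\in[\CL^0(M),\CL^{a-1}(M)]+\CLsm$; hence $\tau(B)=0$. (When $a-1=-n$, i.e.\ $a=-n+1$, which is allowed since $-n+1\notin\Z_{>-n}$, one must also kill the residue obstruction: but $\tau$ applied to $\Res(\cdot)Q$-type terms is controlled because $\tau$ vanishes on smoothing operators and $Q\in\CLsm$-adjacent—more precisely one absorbs the $\Res$ term into the same inductive scheme since $\Res(B)=0$ is not automatic; here one uses that $Q\in\CL^{-n}(M)=\CL^{a-1}(M)$ and the $\Res$ term is already of the right order, so it is handled by the induction going one step further.) Consequently $\tau$ descends to a linear functional $T$ on $\cinf{S^*M}$, i.e.\ a distribution $T\in\bigl(\cinf{S^*M}\bigr)^*$, with $\tau=T\circ\sigma_a$.

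Uniqueness of the pair $(\gl,T)$: evaluate on a rank-one smoothing operator $J$ with $\Tr(J)=1$ to get $\gl=\tau(J)/\Tr(J)$ (since $\sigma_a(J)=0$ and $\TRb_a(J)=\Tr(J)$ for $a<-n$, with the $a=-n$ case checked separately), and then $T$ is forced as $\tau-\gl\,\TRb_a$ composed with any right inverse $\Op$ of $\sigma_a$—well-defined because we have shown $\tau-\gl\,\TRb_a$ vanishes on $\CL^{a-1}(M)=\ker\sigma_a$. The final sentence, that a linear functional on $\CLa$ is a hypertrace iff it is a pretrace, then follows: Proposition \ref{p:PreHyperTrace}(1) gives one direction (every pretrace is a hypertrace, via Lemma \ref{l:commCl02aaa}); conversely, every hypertrace has the form $\gl\,\TRb_a+T\circ\sigma_a$, and each of these two pieces is a pretrace—$T\circ\sigma_a$ because $\sigma_a([A,B])=\tfrac1i\{\sigma_{a/2}(A),\sigma_{a/2}(B)\}$ and $\{\cP^{a/2},\cP^{a/2}\}\subset\ker\res_Y$ so the symbol of a commutator of order-$a/2$ operators has vanishing residue, but actually one needs $T\circ\sigma_a$ to kill $\sigma_{2a-1}([A,B])$ which is of order $2a-1<a$ hence in $\ker\sigma_a$ automatically when $2a-1<a$, i.e.\ $a<1$; for general $a$ one invokes that $T\circ\sigma_a$ vanishes on $\CL^{a-1}(M)\supset[\CLa,\CLa]$-symbol-part—and $\TRb_a$ by Proposition \ref{p:PreHyperTrace}(4). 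The main obstacle I expect is the careful bookkeeping in the borderline case $a=-n+1$ (and more generally ensuring the residue term in Theorem \ref{t2} does not introduce an extra degree of freedom beyond $\gl$), which requires pushing the inductive commutator construction one order further than in the generic case and checking the residue of the leading symbol genuinely vanishes there.
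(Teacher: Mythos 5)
Your overall strategy is the paper's: combine the commutator representation of Theorem \ref{t2} with $m=0$, the uniqueness of the trace on smoothing operators (Proposition \ref{p:PreHyperTrace}, part 2), and the exact symbol sequence \eqref{eq:LeadingSymbol}. However, as written your first step misuses the hypertrace property: applying Theorem \ref{t2} to $A\in\CLa$ with $m=0$ produces $Q_j\in\CL^{a+1}(M)$, and a hypertrace on $\CLa$ is only assumed to vanish on commutators $[P,Q]$ with $P\in\CL^0(M)$, $Q\in\CLa$. Since $\CL^{a+1}(M)\not\subset\CLa$ (the inclusion goes the other way, contrary to your parenthetical ``maps in''), the identity $\tau(A)=\Res(A)\,\tau(Q)+\tau(R)$ for $A\in\CLa$ is not justified. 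This is precisely the subtlety the paper's proof avoids by applying Theorem \ref{t2} one order lower, to $A\in\CL^{a-1}(M)$: then $Q_j\in\CL^{(a-1)+1}(M)=\CLa$, the hypertrace hypothesis applies verbatim, and moreover $\Res(A)=0$ automatically because $a-1\in\R\setminus\Z_{\ge -n}$, so no residue term survives. Fortunately your subsequent reduction (subtracting $\gl\,\TRb_a$ after identifying $\tau\restriction\CLsm=\gl\,\Tr$) uses only Proposition \ref{p:PreHyperTrace} and not this flawed identity.

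Your second paragraph is the correct (and the paper's) argument, but with an off--by--one in the degrees: to write $\sigma_{a-1}(B)=\tfrac 1i\sum_j\{p_j,q_j\}$ with $p_j\in\cP^0$ you need $q_j\in\cP^{a}$, lifted to $Q_j\in\CLa$, and the conclusion is $B\in[\CL^0(M),\CLa]+\CLsm$; your stated conclusion $B\in[\CL^0(M),\CL^{a-1}(M)]+\CLsm$ is false, since that space lies in $\CL^{a-2}(M)+\CLsm$ while $B$ has in general a nonzero leading symbol of order $a-1$. With the degrees corrected, $\tau(B)=0$ follows exactly as you intend, and in fact you can simply quote Theorem \ref{t2} applied to $\CL^{a-1}(M)$ instead of re-running its induction. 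Your borderline case is also spurious: $-n+1\in\Z_{>-n}$, so $a=-n+1$ is excluded by hypothesis, and under the hypothesis no residue obstruction ever appears in the iteration (the hand--waving there can be deleted). Finally, in the closing pretrace argument the clean reason is simply that for $A,B\in\CL^{a/2}(M)$ one has $[A,B]\in\CL^{a-1}(M)=\ker\sigma_a$ by multiplicativity of the leading symbol, so $T\circ\sigma_a$ kills such commutators for every $a$ (no restriction $a<1$ is needed), while $\TRb_a$ is a pretrace by Proposition \ref{p:PreHyperTrace}, part 4. With these repairs your proof coincides with the paper's.
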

\begin{remark} 
Recall from Eq.\ \eqref{eq:LeadingSymbol} that $\sigma_a$ denotes the leading
symbol map.
Since the leading symbol is multiplicative (see Eq.\ \eqref{eq:SymbolMultiplicative})
it follows that for any $T\in\bigl(\cinf{S^*M}\bigr)^*$ the functional
$T\circ \sigma_a$ is a pretrace and a hypertrace on $\CLa$. Some authors (see \cite{PayRos:TCCLS}) call such traces
\emph{leading symbol traces}. 
\end{remark}
\begin{proof} We note
that if $\tau$ is a hypertrace on $\CLa$ then by Proposition
\ref{p:PreHyperTrace} (2.), there is a unique $\gl\in\C$ such that 
$\tau\restriction \CLsm=\gl \Tr.$


 We apply Theorem \ref{t2} with $m=0$. Then for $A\in\CL^{a-1}(M)$
 we find
\begin{equation}\label{eq:110}  
    A=\sum_{j=1}^N [P_j,Q_j]+R,
\end{equation}
 with $P_j\in\CL^0$, $Q_j\in\CLa$. Note that $\Res(A)=0$
 since $a-1\in\R\setminus \Z_{\geq-n}$.
 From Eq.\ \eqref{eq:110} we infer
 $\tau(A)=\tau(R)=\gl\Tr(R)=\gl\TR(R)=\gl \TR(A).$

 Thus we have $\tau\restriction\CL^{a-1}(M)=\gl\,
 \TR\restriction\CL^{a-1}(M)=\gl\,  \TR_{a-1}=\gl\,  \TRb_{a-1}$. Put
 $\widetilde \tau:=\tau-\gl\, \TRb_a$. Then $\widetilde\tau$ vanishes
 on $\CL^{a-1}(M)$ and thus in view of the exact sequence Eq.\ \eqref{eq:LeadingSymbol} 
 there is indeed a unique linear functional $T\in\bigl(\cinf{S^*M}\bigr)^*$
 such that $\widetilde \tau = T\circ \sigma_a$.

 For the last statement we note that by Proposition \plref{p:PreHyperTrace} (1.)
 every pretrace is a hypertrace. For the converse note that $\tau=\gl \TRb_a+T\circ\, \sigma_a,  a\in
 \R\setminus \Z_{>-n}$, is indeed a pretrace. For $\TRb_a$ this follows from Proposition
 \ref{p:PreHyperTrace} (4.). For $T\circ\,\sigma_a$ it is a consequence of
 \eqref{eq:SymbolMultiplicative}.
\end{proof} 

The remaining cases of integral values are dealt with in the following:
\begin{theorem}\label{t5} Let $M$ be a closed connected riemannian
 manifold of dimension $n>1$, $a\in \Z_{> -n}$ and
 let $\tau$ be a hypertrace on $\CLa$. Then there are uniquely determined 
 $\gl\in\C$ and a distribution $T\in\bigl(\cinf{S^*M}\bigr)^*$
 such that $\tau=\gl\, \Res_a + T\circ \sigma_a$.

Consequently, a linear functional on $\CLa$ is a hypertrace if and only
 if it is a pretrace.

 \end{theorem}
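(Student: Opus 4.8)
The plan is to mimic the proof of Theorem~\ref{t4}, replacing the Kontsevich--Vishik trace by the residue trace and compensating for the fact that, for $a\in\Z_{>-n}$, the smoothing operators now carry the residue trace's normalization rather than the $L^2$--trace. First I would invoke Proposition~\ref{p:PreHyperTrace}~(2.): since $\tau$ is a hypertrace on $\CLa$, there is a unique $\gl\in\C$ with $\tau\restriction\CLsm=\gl\,\Tr$. The essential new ingredient, relative to the proof of Theorem~\ref{t4}, is to show that in fact $\gl=0$, i.e.\ that \emph{every hypertrace on $\CLa$ vanishes on $\CLsm$} when $a\in\Z_{>-n}$. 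This is exactly where Proposition~\ref{p:SmoothCommutator1} (Ponge's result, reproved here via Lemma~\ref{l:PDOkernel}) enters: it gives $\CLsm\subset[\CL^0(M),\CL^{-n+1}(M)]$, and since $a\ge -n+1$ we have $\CL^{-n+1}(M)\subset\CLa$, so $\CLsm\subset[\CL^0(M),\CLa]$; applying the hypertrace property of $\tau$ term by term yields $\tau\restriction\CLsm=0$, hence $\gl=0$ in the relation above. (Here I am using $a\in\Z_{>-n}$, i.e.\ $a\ge -n+1$; the case $a=-n$ is excluded from this theorem.)

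Next I would apply Theorem~\ref{t2} with $m=0$: fixing $Q\in\CL^{-n}(M)$ with $\Res(Q)=1$, for any $A\in\CL^{a-1}(M)$ there exist $P_j\in\CL^0(M)$, $Q_j\in\CL^{a}(M)$ and $R\in\CLsm$ with
\[
  A=\sum_{j=1}^N[P_j,Q_j]+\Res(A)\,Q+R.
\]
Since $a-1\in\Z$ with $a-1\ge -n$, $\Res(A)$ need not vanish. Applying $\tau$, using that $\tau$ is a hypertrace (so it kills each $[P_j,Q_j]$) and that $\tau\restriction\CLsm=0$, I get $\tau(A)=\Res(A)\,\tau(Q)$. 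Writing $\mu:=\tau(Q)$ this shows $\tau\restriction\CL^{a-1}(M)=\mu\,\Res\restriction\CL^{a-1}(M)=\mu\,\Res_{a-1}$. Then set $\widetilde\tau:=\tau-\mu\,\Res_a$; this functional vanishes on $\CL^{a-1}(M)$, so by the exact sequence \eqref{eq:LeadingSymbol} it factors uniquely through the leading symbol: there is a unique $T\in\bigl(\cinf{S^*M}\bigr)^*$ with $\widetilde\tau=T\circ\sigma_a$. Thus $\tau=\mu\,\Res_a+T\circ\sigma_a$ with $\mu,T$ uniquely determined. Uniqueness is immediate: if $\mu\,\Res_a+T\circ\sigma_a=0$ then restricting to $\CL^{a-1}(M)$, where $\sigma_a$ vanishes, gives $\mu\,\Res_{a-1}=0$, and since $\Res_{a-1}\ne 0$ (as $a-1\ge -n$) we get $\mu=0$, whence $T\circ\sigma_a=0$ and $T=0$ by surjectivity of $\sigma_a$.

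For the final "Consequently" clause: by Proposition~\ref{p:PreHyperTrace}~(1.) every pretrace on $\CLa$ is a hypertrace. For the converse, a hypertrace $\tau$ has just been shown to equal $\mu\,\Res_a+T\circ\sigma_a$; both summands are pretraces on $\CLa$ --- $\Res_a$ by Proposition~\ref{p:PreHyperTrace}~(4.) (note $\Res$ is stated there and in Section~\ref{s:POT} to be a pretrace for all $a\in\R$), and $T\circ\sigma_a$ by multiplicativity of the leading symbol, Eq.~\eqref{eq:SymbolMultiplicative} --- hence so is $\tau$. The main obstacle, and the only genuinely new point beyond the argument of Theorem~\ref{t4}, is the vanishing $\tau\restriction\CLsm=0$; everything hinges on having $\CL^{-n+1}(M)\subset\CLa$, which is precisely the hypothesis $a\ge -n+1$, together with Proposition~\ref{p:SmoothCommutator1}. (One could alternatively use the self--contained argument of Subsection~\ref{ss:AAT} in place of Proposition~\ref{p:SmoothCommutator1}.)
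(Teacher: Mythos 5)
Your proof is correct and uses essentially the same ingredients as the paper's: the paper applies Theorem~\ref{t3} (which prepackages Theorem~\ref{t2} together with Proposition~\ref{p:SmoothCommutator1}) to get the commutator decomposition without a smoothing remainder, whereas you invoke Theorem~\ref{t2} directly and use Proposition~\ref{p:SmoothCommutator1} separately to kill the remainder by first showing $\tau\restriction\CLsm=0$. This reorganization is, if anything, slightly cleaner since Theorem~\ref{t3} as stated only covers $-n\le a<0$, and the paper's citation of it implicitly requires the easy extension to all integers $a\ge -n$ which your route avoids needing.
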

\begin{proof}
We apply Theorem \ref{t3} and find 
 for $A\in\CL^{a-1}(M)$
\begin{equation}\label{eq:111}  
    A=\sum_{j=1}^N [P_j,Q_j]+\Res(A)\,Q,
\end{equation}
 with $P_j\in\CL^0(M), Q_j\in\CL^{a}(M)$. 
 Thus $\tau(A)=\tau(Q)\Res(A).$ As in the proof of
 Theorem $\ref{t4}$ one now concludes
 $\tau=\tau(Q)\, \Res_a + T\circ \sigma_a$.

 The last statement follows from Proposition
 \ref{p:PreHyperTrace} and the fact that $\Res_a$ and $T\circ\sigma_a$
 are pretraces on $\CLa$.
\end{proof} 
Combining Theorem \ref{t4}, Theorem \ref{t5} and Proposition \ref{p:PreHyperTrace}
we now obtain a complete classification of traces on the algebras $\CLa$, $a\in\Z_{\le 0}$.

\begin{cor}\label{cor:ClassTracesCla} Let $a\in\Z_{\le 0},$ and denote by 
 \[
 \pi_a:\CL^a(M)\longrightarrow \CLa/\CL^{2a-1}(M)
 \]
the quotient map. Let $\tau:\CL^{a}(M)\to\C$ be a trace. 
Then there are uniquely determined $\gl\in\C$ and $T\in \bigl(\CLa/\CL^{2a-1}(M)\bigr)^*$ such that
\begin{equation}
 \tau= \gl\,\TRb_a + T\circ\pi_a.      
\end{equation}
\end{cor}
\begin{remark}\label{r:thetaa}
Note that for $a=1$, the space $\CL^1(M)$ is not an algebra but it is a Lie algebra and it 
makes sense to talk about traces; in this case, the quotient map $\pi_1$ is trivial and 
the proof below shows that $\Res$ is up to normalization the unique trace on $\CL^1(M)$.

In the case $a=0$ this result was known, see \cite{LescPay:UMDEP} 
(and also \cite{Wod:RCHS}).

If $2a\leq-n\leq a$, $\Res_a$ is a non--trivial trace on $\CL^{a}(M)$, however since $\Res\restriction \CL^{2a-1}(M)=0$ (since $2a-1<-n$)
there is $\gl\in \bigl(\CLa/\CL^{2a-1}(M)\bigr)^*, $ such that $\Res_a=\gl\circ\pi_a$.

By choosing right inverses $\theta_a:\cinf{S^*M}\to \CLa$ to the symbol map
one iteratively obtains an isomorphism
\begin{equation}\label{eq:122}  
 \begin{split} 
  \CLa/\CL^{2a-1}(M) &\cong \bigoplus_{k=0}^{|a|} \CL^{a-k}(M)/\CL^{a-k-1}(M) \\
                     &\cong \bigoplus_{k=0}^{|a|} \cinf{S^*M}.
\end{split}
\end{equation}
Under this (non--canonical) isomorphism $T\in 
\bigl(\CLa/\CL^{2a-1}(M)\bigr)^*$ corresponds to a $(|a|+1)$--tuple
$(T_j)_{j=0}^{|a|}$ of distributions $T_j\in \bigl(\cinf{S^*M}\bigr)^*$.
\end{remark}

\begin{proof}
 By Proposition \ref{p:PreHyperTrace}, $\tau_{2a}=\tau\restriction\CL^{2a}(M)$ is a hypertrace on $\CL^{2a}(M)$.
By Theorem \ref{t4} (if $2a<-n+1$) resp.\ Theorem \ref{t5} (if $-n+1\leq2a\leq0$) there is a unique 
$\gl\in\C$ such that 
\begin{equation}\tau_{2a-1}=
\begin{cases}
\gl\,\Tr_{2a-1}, & \text{if }2a<-n+1,\\
\gl\,\Res_{2a-1}, & \text{if }-n+1\leq2a\leq0.
\end{cases}
\end{equation}
Putting
\begin{equation}\widetilde{\tau}=\tau - \gl\,  \TRb_a
\end{equation}
it follows that $\widetilde{\tau}$ vanishes on $\CL^{2a-1}(M)$ and hence is of the form 
$T\circ\pi_a$ for a unique $T\in \bigl(\CLa/\CL^{2a-1}(M)\bigr)^*$.
\end{proof}


\subsection{Alternative approach to Theorem \ref{t5}}        
\label{ss:AAT}

For this subsection we received considerable help from Sylvie Paycha which is
acknowledged with gratitude.

The proof of the uniqueness of the canonical trace $\TR$ (Theorem \ref{t4})
relied solely on the results of Section \ref{s:CohHomDifForms} and Theorem \ref{t:SmoothCommutator}.
The proof of the uniqueness of the residue trace (Theorem \ref{t5}), however, relied additionally
on Theorem \ref{t3} and thus on Proposition \plref{p:SmoothCommutator1} due to Ponge. 
We will give here an alternative completely self--contained
proof of Theorem \ref{t5} which does not make use of Proposition \ref{p:SmoothCommutator1}.

Given a hypertrace $\tau$ on $\CLa$, $a\in \Z, -n<a\le 0$, apply Theorem \ref{t2} with $m=0$.
Then for $A\in\CL^{a-1}(M)$
\begin{equation}
        A=\sum_{j=1}^N [P_j,Q_j] + \Res(A)\, Q+R
\end{equation}
with $P_j\in\CL^0(M), Q_j\in\CLa$ and $R\in\CLsm$. If one can conclude that $\tau(R)=0$ then 
one can proceed as after \eqref{eq:111}. So we have to prove directly

\begin{prop}\label{p:TraceVanishesSmoothing} 
Let $M$ be a closed riemannian manifold and for
$a\in\Z$,  $-n+1\le a\le 0$, let $\tau$ be a hypertrace on $\CLa$. Then $\tau\restriction\CLsm=0$.
\end{prop}
\begin{proof} Let $(U,x_1,\ldots,x_n)$ be a local coordinate chart of $M$. 
Recall that by $\Csym_{\comp}^{a}(U\times\R^n)$ we denote the set of classical symbols of order $a$ on $U$ 
with $U$--compact support, and $\CL_{\comp}^{a}(U)$ denotes the space of
classical pseudodifferential operators of order $a$ on $U$ whose Schwartz kernel has compact support in $U\times U$.
Any operator in $\CL_{\comp}^{a}(U)$ can be extended by zero to an operator in 
$\CL^{a}(M)$, and we have the natural inclusion $\CL_{\comp}^{a}(U)\subset\CL^{a}(M)$.

Note, however, that although for $\sigma\in\CS_{\comp}^a(U\times\R^n)$ the operator $\Op(\sigma)$
maps $\cinfz{U}\to \cinfz{U}$, it does not necessarily lie in $\CL_{\comp}^a(U)$. Below we will take care of this fact by
multiplying by some cut--off function from the right.

Let $\tau\in\sS(\R^n)$ be a Schwartz function with $\int_{\R^n}\tau(\xi)d\xi =1$. By 
Lemma \ref{symbolsumofder} there exist $\tau_1,\ldots,\tau_n\in\Csym^{a}(\R^n)$ such that 
\begin{equation}
\tau=\sum_{k=1}^n\partial_{\xi_k}\tau_k.
\end{equation} 
We note in passing that since the function $\tau$ has non--vanishing integral, at least one of the functions $\tau_k$ 
does not lie in $\sS(\R^n)$.

Next we choose $f\in \cinfz{U}$ with $\int_Uf(x)dx=1$. Then 
$\sigma:=f\otimes\tau$, defined by $\sigma(x,\xi):=f(x)\tau(\xi)$, 
is a smoothing symbol with $U$--compact support. Furthermore,
\begin{align} 
     &\sigma
          =f\otimes\tau=f\otimes\sum_{k=1}^n\partial_{\xi_k}\tau_k=\sum_{k=1}^n\partial_{\xi_k}(f\otimes\tau_k),\\ 
     &\int_{U\times\R^{n}}\sigma(x,\xi)\,d\xi\,dx  = 1.\label{integralsigmanocero}
 \end{align}
 Integration by parts shows that
 (cf.\ \cite[Thm.\ 18.1.6]{Hor:ALPI}, \eqref{eq:108}) 
\begin{equation}\label{Op(sigma)commutators1}
\Op(\sigma)=\sum_{k=1}^n\Op(\partial_{\xi_k}(f\otimes\tau_k))=-i\sum_{k=1}^n[\Op(x_k),\Op(f\otimes\tau_k)].
\end{equation}

 Let $\psi\in \cinfz{U}$ be a function with $\psi=1$ in a neighborhood of
$\supp(f)$; then $\psi f=f$. 
Moreover, for all $k=1,\ldots,n$, 
\begin{equation}
 \begin{split}
[\Op(&x_k),\Op(f\otimes\tau_k)]\Op(\psi)=[\Op(x_k),\Op(f\otimes\tau_k)\Op(\psi)] \\
    &=[\Op(\psi x_k),\Op(f\otimes\tau_k)\Op(\psi)]+A_k, \label{commutatorspsix_k}
\end{split}
\end{equation}
with
\begin{equation}
 \begin{split}
    A_k&:=\Op(f\otimes\tau_k)\Op(\psi)\Op(x_k)\Op(\psi)-\Op(\psi)\Op(f\otimes\tau_k)\Op(\psi)\Op(x_k) \\ 
       &=\Op(f\otimes\tau_k)\Op(\psi)\Op(x_k)(\Op(\psi)-1). \label{operatorAk}
\end{split}
\end{equation}
Here, we used that the operator $\Op(x_k)$ commutes with the operator of multiplication by $\psi$, $\Op(\psi)$, 
cf.\ Remark \ref{r:MultiplicationOp}, and that $\psi f=f$.

Since $f\otimes\tau_k\in\Csym_{\comp}^{a}(U\times\R^n)$, the operator $\Op(f\otimes\tau_k)\Op(\psi)$ 
lies in $\CL_{\comp}^{a}(U)$; similarly, $\psi
x_k\in\Csym_{\comp}^{0}(U\times\R^n)$ and the operator of multiplication 
by $\psi x_k$, $\Op(\psi x_k)$, lies in $\CL_{\comp}^{0}(U)$.

Let $\tau$ be a hypertrace on $\CL^{a}(M)$. 
Then $\tau$ vanishes on  $[\CL_{\comp}^{0}(U),\CL_{\comp}^{a}(U)]$. In particular, for all $k=1,\ldots,n$, 
\[\tau\bigl([\Op(\psi x_k),\Op(f\otimes\tau_k)\Op(\psi)]\bigr)=0.\]
By Proposition \ref{p:PreHyperTrace} (2.),  we have $\tau\restriction \CLsm=\gl \Tr$ for some $\gl\in\C$.
Now, since $\psi=1$ near the support of $f$, by \eqref{operatorAk} the operator $A_k$ is smoothing and its 
Schwartz kernel vanishes on the diagonal. Hence,  its $L^2$--trace vanishes
and thus also $\tau(A_k)=\gl \Tr(A_k)=0$. 

Thus, for $\Op(\sigma)\Op(\psi)\in\CL_{\comp}^{-\infty}(U)$, from \eqref{Op(sigma)commutators1} and 
\eqref{commutatorspsix_k} we conclude  
\begin{equation}\label{LambdainOpsigmacommutators}
  \begin{split}
 \tau(&\Op(\sigma)\Op(\psi))
     =-i\sum_{k=1}^n\tau\bigl([\Op(x_k),\Op(f\otimes\tau_k)]\Op(\psi)\bigr) \\
   &=-i\sum_{k=1}^n \left(\tau\bigl([\Op(\psi x_k),\Op(f\otimes\tau_k)\Op(\psi)]\bigr)+\tau(A_k)\right) =0.
  \end{split}
\end{equation}
On the other hand, by \eqref{defL2tracesymbol} and Proposition
\ref{p:PreHyperTrace} (2.), 
\begin{equation}
 \tau(\Op(\sigma)\Op(\psi))=\gl\,\Tr(\Op(\sigma)\Op(\psi))=\gl\,\int_{U\times\R^{n}}\sigma(x,\xi)\,d\xi\,dx
 =\gl.
\end{equation} 
Therefore, by \eqref{LambdainOpsigmacommutators} we obtain $\gl=0$.
\end{proof}

\section{Extension to vector bundles}\label{s:TracesVectBund}     

In this final section we extend the classification of traces and hypertraces
to the spaces $\CLaE$ of pseudodifferential operators acting on sections
of the vector bundle $E$ over $M$.

\subsection{Preliminaries} \label{ss:ExtVectBundPrelim}
Unless otherwise said, in the whole section $M$ denotes a smooth closed connected 
riemannian manifold of dimension $n$. Let $E\to M$ be  a smooth hermitian vector bundle
over $M$. We denote by $\CLaE$ the space of classical pseudodifferential 
operators of order $a$ acting on the sections of $E$. $\CLaE$ acts naturally
as (unbounded) operators on the Hilbert space $L^2(M,E)$ of square integrable
sections of $E$. The elementary discussion of 
traces, pretraces and hypertraces in Subsection \ref{ss:Traces} extends
verbatim to $\CLaE$. However, as noted there, we now 
only have $[\CLaE,\CL^{b}(M,E)]\subset\CL^{a+b}(M,E)$ as opposed to 
$[\CLa,\CL^{b}(M)]\subset\CL^{a+b-1}(M)$ in the scalar case $E=M\times\C$.
Lemma \plref{l:commCl02aaa} holds with the same
proof for $\CL^\bullet(M,E)$ instead of $\CL^\bullet(M)$. Finally, 
Theorem \ref{t:SmoothCommutator} holds for $\CLsmE$ too; this follows 
directly from \cite[Thm.\ A.1]{Gui:RTFIO}, which is stated in a Hilbert space
context and is therefore flexible enough. 

In sum, also Proposition \plref{p:PreHyperTrace} (1.--3.) holds accordingly:

\begin{prop}\label{p:PreHyperTraceE} Let $a\in\R$.
 
 \textup{1. }
 Any pretrace on $\CLaE$ is a hypertrace on $\CLaE$.
 
 \textup{2.}
 If $\tau$ is a hypertrace on $\CL^a(M,E)$ then there is a unique constant $\lambda\in\C$
 such that $\tau\restriction\CLsmE=\lambda\, \Tr$.

 \textup{3.}
 If $a\in\Z_{\le 0}$ and $\tau$ is a trace on $\CL^{a}(M,E)$ then
 $\tau\restriction \CL^{2a}(M,E)$ is a pretrace (and hence a hypertrace). 
 Conversely, given a pretrace on $\CL^{2a}(M,E)$ then any linear extension 
 $\widetilde \tau$ of $\tau$ to $\CL^{a}(M,E)$ is a trace.
 
 \end{prop}
 
 For the analogue of Proposition \plref{p:PreHyperTrace} (4.) see Proposition
 \ref{p:HypertracesE}. 
 
The main task now is to classify the hypertraces on $\CLaE$.

\subsection{Trivial vector bundles}\label{ss:TrivialVectBund}

Let $M_N(\C)$ be the space of $N\times N$ matrices with coefficients in $\C$. For all $i,j=1,\ldots,N$, 
we denote by $E_{ij}$ the elementary matrix in $M_N(\C)$ with $1$ in the $(i,j)$--position and 0 everywhere else. 
The matrices $E_{ij}$ form a basis of $M_N(\C)$ and we have
\begin{equation}
 \begin{split}\label{eq:MatrixRelations}
        E_{ij} \, E_{kl} \, = \, \delta_{jk}\, E_{il}.
 \end{split}
\end{equation}
Let us denote by $\tr_N$ the unique trace on the algebra $M_N(\C)$ such that for all $i=1,\ldots,N$, $\tr_N(E_{ii})=1$.

For a complex vector space $V$ we will tacitly identify $M_N(V)$ with
$V\otimes M_N(\C)$ via 
\begin{equation} 
x:=\left(x_{ij}\right)_{i,j}\longmapsto  \sum_{i,j=1}^N x_{ij}\otimes E_{ij}.
\end{equation}
Obviously, we have $\CL^a(M,\C^N)=M_N\bigl(\CLa\bigr)\cong \CLa\otimes
M_N(\C)$. Here, by slight abuse of notation $\CL^a(M,\C^N)$ denotes the
space of classical pseudodifferential operators acting on the trivial
vector bundle $M\times \C^N$.

\begin{dfn}\label{def:TauTimesTr} Let $a\in\R$ and let $\tau$ be a linear
 functional on $\CLa$. Then we put
\begin{equation}\label{eq:112}   
  \begin{split}
      \tau\otimes \tr_N:\CL^a(M,\C^N)&\longrightarrow \C,\\
              A:=\left(A_{ij}\right)_{i,j}&\mapsto \sum_{i,j=1}^N
              (\tau\otimes\tr_N) (A_{ij}\otimes E_{ij})=\sum_{i=1}^N \tau(A_{ii}).
  \end{split}
\end{equation}
It is straightforward to check that if $\tau$ is a hypertrace (pretrace,
trace) on $\CLa$ then $\tau\otimes\tr_N$ is a hypertrace (pretrace, trace)
on $\CL^a(M,\C^N)$.
\end{dfn}

\begin{prop}\label{p:HypertracesCN}
 Let $a\in\R$. Then every hypertrace on $\CL^a(M,\C^N)$ is of the
 form $\tau\otimes\tr_N$ with a unique hypertrace $\tau$ on $\CLa$.
\end{prop}
\begin{proof} Let $T$ be a hypertrace on $\CL^a(M,\C^N)$. For $i,j=1,\ldots,N$
 we put $T_{ij}:\CLa\to\C$, $T_{ij}(A):=T(A\otimes E_{ij})$.

Since $\Id\in\CL^0(M,\C^N)$ we infer from the hypertrace property
\begin{equation}\label{eq:116}  
 \begin{split}
        T_{ij}(A)&=T(A\otimes E_{ij})=
           T\bigl(  (A\otimes E_{i1}) \, (\id \otimes E_{1j}) \bigr )\\
                 &=T \bigl ( (\id\otimes E_{1j})\, (A\otimes E_{i1}) \bigr) =
                 \delta_{ij}\, T_{11}(A),
 \end{split}                
\end{equation}
thus $T_{ij}=0$ for $i\not =j$ and $T_{11}=T_{22}=\ldots=T_{NN}=:\tau$.

$\tau$ is a hypertrace on $\CLa$. Namely, for $A\in\CLa, B\in\CL^0(M)$
we have
\begin{equation}\label{eq:117}   
  \begin{split}
   \tau(AB) &= T((AB)\otimes E_{11})
     =T\bigl( (A\otimes E_{11})\, (B\otimes E_{11} ) \bigr) \\
            &= T\bigl( (B\otimes E_{11})\, (A\otimes E_{11}) \bigr) = \tau(BA).
  \end{split}
\end{equation}
Certainly, we have $T=\tau\otimes \tr_N$. 

For the uniqueness we only have to note that if  $T=\tau\otimes \tr_N$ 
then $\tau(A)=T(A\otimes E_{11})$.
\end{proof} 

\subsection{General vector bundles}\label{ss:Generalvb}

Let $E$ be a vector bundle over $M$. By Swan's Theorem there is a positive integer $N$, 
such that $E$ is a direct summand of $M\times\C^N$; let $e\in
M_N(\cinf{M})=\cinf{M,M_N(\C)}$ be a smooth projection onto $E$.
Then the $\cinf{M}$--module of smooth sections of $E$ is given by 
\begin{equation}\label{eq:137}  
\Gamma^\infty(M,E)\cong e(\cinf{M}^N).
\end{equation}
Note that since we assumed $M$ to be connected (cf.\ Subsection
\ref{ss:ExtVectBundPrelim}), the idempotent valued function $e$ has constant rank.

The following lemma is well--known. Since we could not find a place where
it is stated as needed we provide, for convenience, a quick proof:

\begin{lemma}\label{l:ReR}
 Let $\cA:=\cinf{M,M_N(\C)}$. Then $\cA\, e\,\cA=\cA$.
Equivalently there exist $p_j,q_j\in \cinf{M,M_N(\C)}$, $j=1,\ldots,r$
   such that 
   \begin{equation}\label{eq:PartUnity}
        \sum_{j=1}^r p_j\,  e\, q_j = 1_M\otimes I_N,   
    \end{equation}
where $1_M$ denotes the function which is constant $1$ on $M$ and $I_N$ is the
$N\times N$ identity matrix.
\end{lemma}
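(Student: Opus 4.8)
The plan is to reduce, locally, to the constant idempotent $P:=\sum_{j=1}^{k}E_{jj}\in M_N(\C)$, where $k$ is the (constant, since $M$ is connected) rank of $E$; we may assume $k\ge 1$, since otherwise $e\equiv 0$ and $\CL^a(M,E)=0$, making the whole section trivial. First I would recall the \emph{local conjugacy} of a smooth idempotent: given $x_0\in M$, the matrix
\[
 g(x):=e(x)e(x_0)+\bigl(I_N-e(x)\bigr)\bigl(I_N-e(x_0)\bigr)
\]
depends smoothly on $x$, satisfies $g(x_0)=I_N$, hence lies in $GL_N(\C)$ on a neighbourhood $U$ of $x_0$, and a direct computation gives $g(x)\,e(x_0)=e(x)\,g(x)$, i.e.\ $e=g\,e(x_0)\,g^{-1}$ on $U$. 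Since the constant idempotent $e(x_0)$ has rank $k$ it is conjugate to $P$ by some fixed $h\in GL_N(\C)$; replacing $g$ by $g\,h^{-1}$ we may assume $e=g\,P\,g^{-1}$ on $U$ with $g\in\cinf{U,GL_N(\C)}$ (and $g^{-1}\in\cinf{U,GL_N(\C)}$, since matrix inversion is smooth).

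Next I would produce a local version of \eqref{eq:PartUnity}. From the relations \eqref{eq:MatrixRelations} and $k\ge 1$ one gets $E_{i1}\,P\,E_{1i}=E_{i1}E_{11}E_{1i}=E_{ii}$ for every $i=1,\dots,N$, hence, writing $P=g^{-1}eg$,
\[
 I_N=\sum_{i=1}^N E_{ii}=\sum_{i=1}^N E_{i1}\,P\,E_{1i}=\sum_{i=1}^N\bigl(E_{i1}g^{-1}\bigr)\,e\,\bigl(gE_{1i}\bigr)\quad\text{on }U.
\]
By compactness of $M$, choose finitely many open sets $U_\alpha$ with maps $g_\alpha\in\cinf{U_\alpha,GL_N(\C)}$ as above, a subordinate partition of unity $\chi_\alpha\in\cinfz{U_\alpha}$ with $\sum_\alpha\chi_\alpha\equiv 1$, and auxiliary cut--offs $\psi_\alpha\in\cinfz{U_\alpha}$ with $\psi_\alpha\equiv 1$ on $\supp\chi_\alpha$. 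Put
\[
 p_{\alpha,i}:=\chi_\alpha\,E_{i1}g_\alpha^{-1},\qquad q_{\alpha,i}:=\psi_\alpha\,g_\alpha E_{1i},
\]
extended by $0$ off $U_\alpha$; these lie in $\cinf{M,M_N(\C)}$ because the cut--off factors are compactly supported in $U_\alpha$.

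Finally I would assemble: on $U_\alpha$ one has $\chi_\alpha\psi_\alpha=\chi_\alpha$, so $p_{\alpha,i}\,e\,q_{\alpha,i}=\chi_\alpha\,E_{i1}(g_\alpha^{-1}eg_\alpha)E_{1i}=\chi_\alpha E_{ii}$ there, while off $\supp\chi_\alpha$ both sides vanish; thus $p_{\alpha,i}\,e\,q_{\alpha,i}=\chi_\alpha E_{ii}$ holds on all of $M$. Summing over $i=1,\dots,N$ and over $\alpha$ gives
\[
 \sum_{\alpha}\sum_{i=1}^N p_{\alpha,i}\,e\,q_{\alpha,i}=\sum_\alpha\chi_\alpha\,I_N=1_M\otimes I_N,
\]
which is \eqref{eq:PartUnity} after relabelling the pairs $(\alpha,i)$ as $j=1,\dots,r$. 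The equality $\cA\,e\,\cA=\cA$ is then immediate: the inclusion $\cA e\cA\subseteq\cA$ is trivial, and conversely any $b\in\cA$ equals $b\,(1_M\otimes I_N)=\sum_j(bp_j)\,e\,q_j\in\cA e\cA$. The only nonroutine ingredient is the local conjugacy statement of the first paragraph, which is classical; the rest is a standard partition-of-unity patching, where the one point needing mild care is the use of two cut--offs $\chi_\alpha,\psi_\alpha$ — $\chi_\alpha$ to make the extension by zero of $p_{\alpha,i}$ smooth, and $\psi_\alpha\equiv 1$ on $\supp\chi_\alpha$ so that the product $p_{\alpha,i}\,e\,q_{\alpha,i}$ still reproduces $\chi_\alpha E_{ii}$.
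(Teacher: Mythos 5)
Your proof is correct and follows essentially the same route as the paper: local conjugacy of $e$ to the constant idempotent $e_k=\left(\begin{smallmatrix}I_k&0\\0&0\end{smallmatrix}\right)$ on a coordinate patch, the algebraic fact that $I_N$ lies in $M_N(\C)\,e_k\,M_N(\C)$, and a partition of unity with a second cut--off that is $\equiv 1$ on the support of the first. The only difference is cosmetic: you spell out the standard formula $g(x)=e(x)e(x_0)+(I_N-e(x))(I_N-e(x_0))$ for the local conjugating map and choose the elementary matrices $E_{i1},E_{1i}$ explicitly where the paper merely invokes constant $a_l,b_l$ with $\sum_l a_l e_k b_l=I_N$.
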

\begin{proof} It obviously suffices to prove Eq.\ \eqref{eq:PartUnity}.
Choose a finite partition of unity $\psi_j, j=1,\ldots,s$, smooth functions
$\chi_j\in\cinf{M}$ such that $\chi_j=1$ in a neighborhood of $\supp(\psi_j)$
and such that in a neighborhood $U_j$ of $\supp(\chi_j)$ there 
is a smooth map $v:U_j\to M_N(\C)$ such that
\[
     v\, e \,  v\ii = e_k := \begin{pmatrix} I_k & 0 \\ 0 & 0 \end{pmatrix}.
\]      
Choose $N\times N$ matrices $a_l,b_l, l=1,\ldots,t$,  with
\[
    \sum_{l=1}^t a_l\, e_k\, b_l = I_N.
\]
We tacitly view $a_l, b_l$ also as constant matrix valued functions on $M$.
Slightly abusing notation we now find the decomposition
\[
  \begin{split}
   1_M \otimes I_N & = \sum_{j=1}^s \psi_j\chi_j \otimes I_N
                         = \sum_{j=1}^s\sum_{l=1}^t \psi_j\, v\ii\, a_l\, e_k\,
                         b_l\, v \,  \chi_j \\
                   & = \sum_{j=1}^s\sum_{l=1}^t \bigl(\psi_j\, v\ii\, a_l\,
                   v\bigr )\, e \,  
                   \bigl(  v\ii\, b_l\, v\, \chi_j).\qedhere
  \end{split}
\]
\end{proof}

For a linear functional $\tau$ on $\CL^a(M,\C^N)$ we now put 
\begin{equation}\label{eq:136}  
   \tau_E(A):= \tau(eAe). 
\end{equation}
This definition depends on the choice of the idempotent $e$ and is
therefore not canonical. As in the scalar case if $\CLaE\subset
\CL^b(M,E)$ we write $\tau_{E,a}:=\tau_E\restriction \CL^a(M,E)$.

The canonical trace $\TR$ and the residue trace $\Res$ are naturally
defined on $\CL^\bullet(M,E)$ for any vector bundle $E$ (cf.\ \cite{Les:NRP}). 
To distinguish them let us for the moment denote by $\TR^{(N)}, \Res^{(N)}$
the corresponding functionals on $\CL^\bullet(M,\C^N)$ and
by $\TR^{(E)}, \Res^{(E)}$ the corresponding functionals on
$\CL^\bullet(M,E)$.

Then one immediately checks that
\begin{align}
 \TR^{(N)}  &=\TR\otimes \tr_N,  &\TR^{(E)}  &=\bigl(\TR\otimes \tr_N)_E,\\
 \Res^{(N)} &=\Res\otimes \tr_N , & \Res^{(E)} &=\bigl(\Res\otimes \tr_N)_E,
\end{align}
hence $\TR$ and $\Res$ are compatible with the operations 
$\tau\mapsto \tau \otimes\tr_N$ and $\tau\mapsto \tau_E$ in the most natural way.

From now on we will write $\TR_E$ for $\TR^{(E)}$, and $\Res_E$ for $\Res^{(E)}$.
A confusion with the notation introduced in Definition \plref{def:PreHyperTrace}
should not arise.

We also extend the linear functional $\Trt$ of Definition 
\ref{def:TRRes} to $\CL^0(M,E)$ by defining 
\[\Trt_E:=\bigl(\Trt\otimes\tr_N\bigr)_E.\]
Since $\Trt$ is not a trace, this definition may depend on the choice 
of the idempotent $e$, hence is not canonical; but $\Trt$ already depended
on a choice.

Finally we put $\TRb_{E,a}:=\bigl(\TRb_a\otimes \tr_N\bigr)_E$ on
$\CLaE$. From Subsection \ref{s:ClassTrac} we see
\begin{equation}\label{eq:136a}  
 \TRb_{E,a}:= 
\begin{cases}
 \TR_{E,a}, & \textup{if }a\in\R\setminus \Z_{\ge -n},\\
 \Trt_{E,a}, & \textup{if } a\in \Z, -n\le a < \frac{-n+1}{2},\\
 \Res_{E,a}, & \textup{if } a\in \Z, \frac{-n+1}{2}\le a.
\end{cases}       
\end{equation}

\begin{prop}\label{p:HypertracesE} 
 
\textup{1.} Let $a\in\R$
and let $\tau$ be a hypertrace (resp. pretrace, trace) on $\CL^a(M,\C^N)$. 
Then $\tau_E: \CLaE\longrightarrow \C, \quad A\mapsto \tau (e A e)$  
is a hypertrace (resp. pretrace, trace) on $\CLaE$.

\textup{2.} Any hypertrace on $\CLaE$ is of the form $\bigl(\tau\otimes\tr_N)_E$
for a unique hypertrace $\tau$ on $\CLa$.

\textup{3.} For $a\in\Z_{\le 0}$, $\TRb_{E,a}$ is a trace on $\CLaE$.
For $a\in \R\setminus\bigl(\Z\cap [-n+1,-n/2]\bigr)$ it is a pretrace (and hence a hypertrace).
\end{prop}
\begin{proof}
 \textup{1.} To prove that the linear functional $\tau_E$ is a hypertrace consider
$A\in \CL^a(M,E), B\in\CL^0(M,E)$. Then
\begin{equation}\label{eq:118}   
  \begin{split}
       \tau_E(AB)&=\tau(eABe)=\tau\bigl((eAe)(eBe)\bigr)\\
                 &=\tau\bigl((eBe)(eAe)\bigr)=\tau(eBAe)=\tau_E(BA).
  \end{split}
\end{equation}
Note that $eAe\in\CL^a(M,\C^N), eBe\in\CL^0(M,\C^N)$. Repeating the argument
with $A,B\in \CL^{a/2}(M,E)$ shows that if $\tau$ is a pretrace then so
is $\tau_E$. Similarly if $a\in\Z_{\le 0}$ and
$\tau$ is a trace then $\tau_E$ is a trace. 

\textup{2.}
Conversely, let $T$ be a hypertrace on $\CLaE$. 
We choose $p_j, q_j, j=1,\ldots,r$ according to Lemma \ref{l:ReR}.
We will repeatedly use that multiplication by $p_j, q_j$ is in
$\CL^0(M,\C^N)$,
resp. that multiplication by $ep_je, eq_je$ is in $\CL^0(M,E)$.

Suppose we had a hypertrace $\tilde T$ on $\CL^a(M,\C^N)$ such
that $\tilde T_E=T$. Then for $A\in\CL^a(M,\C^N)$
\begin{equation}\label{eq:119}  
 \begin{split}
  \tilde T(A)&=  \tilde T\bigl( (1_M\otimes I_N) A\bigr) =\sum_{j=1}^r 
  \tilde T\bigl( p_j e q_j A \bigr) =\sum_{j=1}^r \tilde T\bigl( p_j e^2 q_j A \bigr)  \\
      &= \sum_{j=1}^r \tilde T\bigl( e q_j A p_j e \bigr) =\sum_{j=1}^r \tilde T\bigl( e^2 q_j A p_j e^2 \bigr)= \sum_{j=1}^r T\bigl( e q_j A p_j e).
   \end{split}
\end{equation}
Thus there is at most such a $\tilde T$. We now \emph{define} $\tilde T$
by the right hand side of Eq.\ \eqref{eq:119}.
We have $\tilde T_E=T$. Indeed, for $A\in\CLaE$
\begin{equation}\label{eq:120}  
 \begin{split}
  \tilde T_E(A) &= \tilde T(e A e ) = 
      \sum_{j=1}^r T\bigl( (e\, q_j\, e\, A\, e)\, ( e\, p_j\, e) \bigr)\\
                &= \sum_{j=1}^r T\bigl( e\, p_j\, e\, q_j\, e\, A\, e \bigr) =
                T(e A e)= T(A).
               \end{split}                
\end{equation}
In the last line we used  Eq.\ \eqref{eq:PartUnity}.

Next we show that $\tilde T$ is a hypertrace on $\CL^a(M,\C^N)$. Indeed,
for $A\in\CL^a(M,\C^N), B\in\CL^0(M,\C^N)$ we find using Eq.\
\eqref{eq:PartUnity},
\begin{equation}\label{eq:121}   
  \begin{split}
   \tilde T(AB)&= \sum_{j=1}^r T\bigl( e\, q_j A\, (1_M\otimes I_N)\, B\,
   p_j\, e\bigr)
                         = \sum_{j,k=1}^r T\bigl( e\, q_j\, A\, p_k\, e\,
                         q_k\, B\, p_j\, e\bigr)\\
               &= \sum_{j,k=1}^r T\bigl( e\, q_k\, B\, p_j\, e\, q_j\, A\,
               p_k\, e\bigr)
               = \sum_{k=1}^r T\bigl( e\, q_k\, B\,  A\, p_k\, e\bigr) =\tilde T(B A).
  \end{split}
\end{equation}
By Proposition \ref{p:HypertracesCN} there is now a unique hypertrace
$\tau$ on $\CLa$ such that $\tilde T= \tau\otimes\tr_N$. Then we
conclude $T=\widetilde T_E=\bigl(\tau\otimes \tr_N)_E$. 
Recall that $\tilde T$ is uniquely determined by $T$ and
$\tau$ is uniquely determined by $\tilde T$, whence $\tau$
is uniquely determined by $T$.

\textup{3.} follows from the proved part 1., Eq.\ \eqref{eq:136a} and
Proposition \plref{p:PreHyperTrace}.
\end{proof}

Before stating the final result, we have to clarify what leading symbol traces on 
$\CLaE$ look like.
For the moment consider a closed manifold $X$ with a vector bundle $E\to X$.
We can construct traces on the noncommutative algebra $\Gamma^\infty(X,\End\, E)$
as follows: first the fiberwise trace induces a linear map
\begin{equation}
 \begin{split}
 \tr_E:\Gamma^\infty(X,\End\, E)\to\cinf{X}\\
 \tr_E(s)(x)=\tr_{E_x}\big(s(x)\big).
 \end{split}
\end{equation}
$\tr_E$ vanishes on commutators. Thus for any $T\in\bigl(\cinf{X}\bigr)^*$ the composition
$T\circ\tr_E$ is a trace on $\Gamma^\infty(X,\End\, E)$.

It is straightforward to see that indeed all traces on $\Gamma^\infty(X,\End\, E)$ are of 
this form. Since we will not use this fact we leave the details of proof to the reader: 

\begin{prop} Let $X$ be a closed manifold and let $E$ be a vector bundle over $X$.
Then for any trace $\tau$ on $\Gamma^\infty(X,\End\, E)$ there is a unique distribution 
$T\in\bigl(\cinf{X}\bigr)^*$ such that $\tau=T\circ\tr_E$.\end{prop}

The final result is now a consequence of Theorems \ref{t4}, \ref{t5},
Corollary \ref{cor:ClassTracesCla}, and Propositions \ref{p:PreHyperTraceE},
\ref{p:HypertracesE}.

\begin{theorem}\label{t6} Let $M$ be a closed connected riemannian manifold
 of dimension $n>1$ and let $E$ be a complex vector bundle over $M$. Denote
 by $\Pi:E\to M$ the projection map, by $\sigma_a:\CLaE\to
 \Gamma^\infty\bigl(S^*M,\Pi^*\End\, E\bigr)$ the leading symbol map, and by
 $\tr_E$ the fiberwise trace
 $\Gamma^\infty\bigl(S^*M,\Pi^*\End\, E\bigr)\to\cinf{S^*M}$. 
 Fix $N$ and an idempotent $e$ as in \textup{Eq.}\ \eqref{eq:137} and let $\TRb_{E,a}$
 be as defined in \textup{Eq.}\ \eqref{eq:136a}.
 
 \textup{1. } Let $a\in\R$ and let $\tau$ be a hypertrace
 on $\CLaE$. Then there are uniquely determined $\gl\in\C$ and a distribution
 $T\in\bigl(\cinf{S^*M}\bigr)^*$
 such that 
\begin{equation}\label{eq:140}  
 \tau=T \circ \tr_E\circ\, \sigma_a +
     \begin{cases}
        \gl\, \TRb_{E,a},  & \textup{if } a\notin\Z_{>-n}, \\
        \gl\, \Res_{E,a},  & \textup{if } a\in\Z_{>-n}.
     \end{cases}
\end{equation}
 
 \textup{2. } Let $a\in \Z_{\leq0}$ and denote by 
 \[\pi_a:\CLaE\to\CLaE/\CL^{2a}(M,E)\] the quotient map. Furthermore, let 
 \[\theta_a:\CLaE/\CL^{2a}(M,E)\to\CLaE\] be a right inverse to $\pi_a$.

Let $\tau:\CLaE\to\C$ be a trace.
Then there are uniquely determined $\gl\in\C$, $T\in\bigl(\cinf{S^*M}\bigr)^*$ and
$\Phi\in\left(\CLaE/\CL^{2a}(M,E)\right)^*$ such that 
 \begin{equation} 
 \tau=\gl\,
 \TRb_{E,a}+T\circ\tr_E\circ\,\sigma_{2a}(\id-\theta_a\circ\pi_a)+\Phi\circ\pi_a.
\end{equation}
\end{theorem}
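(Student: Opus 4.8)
The plan is to deduce Theorem~\ref{t6} from the scalar classification — Theorems~\ref{t4} and \ref{t5} and Corollary~\ref{cor:ClassTracesCla} — by transporting those results through the two reduction operations $\tau\mapsto\tau\otimes\tr_N$ and $\tau\mapsto\tau_E$ of Definition~\ref{def:TauTimesTr} and Eq.~\eqref{eq:136}, whose compatibility with tracial functionals is recorded in Propositions~\ref{p:PreHyperTraceE} and \ref{p:HypertracesE}.

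For part~1, given a hypertrace $\tau$ on $\CLaE$ I would first apply Proposition~\ref{p:HypertracesE}~(2.) to write $\tau=(\tau_0\otimes\tr_N)_E$ for a \emph{unique} hypertrace $\tau_0$ on $\CLa$, and then Theorem~\ref{t4} (if $a\notin\Z_{>-n}$) or Theorem~\ref{t5} (if $a\in\Z_{>-n}$) to obtain the unique $\gl\in\C$ and $T\in\bigl(\cinf{S^*M}\bigr)^*$ with $\tau_0=\gl\,\widehat{R}_a+T\circ\sigma_a$, where $\widehat{R}_a$ denotes $\TRb_a$ when $a\notin\Z_{>-n}$ and $\Res_a$ when $a\in\Z_{>-n}$. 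The remaining work is to see how this decomposition behaves under $\tau_0\mapsto(\tau_0\otimes\tr_N)_E$: straight from the definitions $(\TRb_a\otimes\tr_N)_E=\TRb_{E,a}$ and $(\Res_a\otimes\tr_N)_E=\Res_{E,a}$, while $\sigma_0(\Op(e))=e$, multiplicativity of the leading symbol, and the fact that tracing the principal symbol of $eAe\in\CL^a(M,\C^N)$ over $\C^N$ yields the $E$-fibrewise trace of $\sigma_a(A)$ for $A\in\CLaE$, together give $\bigl((T\circ\sigma_a)\otimes\tr_N\bigr)_E=T\circ\tr_E\circ\sigma_a$. This yields \eqref{eq:140}; uniqueness of $\gl$ and $T$ follows from their scalar uniqueness together with the bijectivity in Proposition~\ref{p:HypertracesE}~(2.).

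For part~2, let $a\in\Z_{\le0}$ and let $\tau$ be a trace on $\CLaE$. By Proposition~\ref{p:PreHyperTraceE}~(3.) and (1.) the restriction $\tau\restriction\CL^{2a}(M,E)$ is a hypertrace, so part~1 with parameter $2a$ gives unique $\gl\in\C$ and $T\in\bigl(\cinf{S^*M}\bigr)^*$ with $\tau\restriction\CL^{2a}(M,E)=\gl\,\widehat{R}_{2a}+T\circ\tr_E\circ\sigma_{2a}$. A short case check against the definition \eqref{eq:136a} of $\TRb$ and the defining property of $\Trt$ (agreement with $\Tr$ in degrees $<-n$) shows $\widehat{R}_{2a}=\TRb_{E,a}\restriction\CL^{2a}(M,E)$; this is exactly the \emph{notational conflict} noted after Definition~\ref{def:TRRes}, and it works in our favour here because $a\in\Z_{\le0}$ forces $2a>-n$ in every case where $\TRb_a$ lies in its residue branch. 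Now set $\mu:=T\circ\tr_E\circ\sigma_{2a}\circ(\id-\theta_a\circ\pi_a):\CLaE\to\C$, which is well defined since $\id-\theta_a\circ\pi_a$ has image in $\ker\pi_a=\CL^{2a}(M,E)$ and, $\theta_a$ being linear, restricts on $\CL^{2a}(M,E)$ to $T\circ\tr_E\circ\sigma_{2a}$. Then $\widetilde{\tau}:=\tau-\gl\,\TRb_{E,a}-\mu$ vanishes on $\CL^{2a}(M,E)$, hence factors as $\Phi\circ\pi_a$ for a unique $\Phi\in\bigl(\CLaE/\CL^{2a}(M,E)\bigr)^*$, which is the asserted formula. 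Uniqueness is obtained by restricting the claimed identity for $\tau$ back to $\CL^{2a}(M,E)$: there the $\Phi\circ\pi_a$-term drops out, $\mu$ restricts to $T\circ\tr_E\circ\sigma_{2a}$ and $\TRb_{E,a}$ to $\widehat{R}_{2a}$, so the part~1 uniqueness pins down $\gl$ and $T$, after which surjectivity of $\pi_a$ forces $\Phi$ unique.

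I expect the main obstacle to be the reconciliation $\widehat{R}_{2a}=\TRb_{E,a}\restriction\CL^{2a}(M,E)$ in part~2: it requires running through the three branches of \eqref{eq:136a} and using that $\Trt$ coincides with $\Tr$ below degree $-n$, exactly as in the scalar Corollary~\ref{cor:ClassTracesCla} but with $\CL^{2a-1}(M)$ replaced by $\CL^{2a}(M,E)$. The identity $\bigl((T\circ\sigma_a)\otimes\tr_N\bigr)_E=T\circ\tr_E\circ\sigma_a$ in part~1 is the other place needing a little care, though it is routine given multiplicativity of the principal symbol.
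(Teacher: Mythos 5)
Your proof is correct and follows essentially the same path as the paper: reduce to the scalar case via Proposition~\ref{p:HypertracesE}, apply Theorems~\ref{t4}/\ref{t5}, use the compatibility identities $(\TRb_a\otimes\tr_N)_E=\TRb_{E,a}$ and $\bigl((T\circ\sigma_a)\otimes\tr_N\bigr)_E=T\circ\tr_E\circ\sigma_a$, and for part~2 restrict the trace to $\CL^{2a}(M,E)$, reconcile $\widehat{R}_{2a}$ with $\TRb_{E,a}\restriction\CL^{2a}(M,E)$, and split off the $\Phi\circ\pi_a$ piece. Your case analysis of the reconciliation step is slightly more explicit than the paper's (which records it as Eq.~\eqref{eq:139}, with a harmless stray $\gl$), but the substance is identical.
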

In the first line of Eq.\ \eqref{eq:140} the case $a=-n$ is included, thus 
we write $\TRb_{E,a}$ instead of $\TR_{E,a}$ there.
\begin{proof}
The right inverse $\theta_a$ can be constructed successively from the map $\Op$, 
cf.\ Remark \ref{r:thetaa}. 

 \textup{1. } By Proposition \ref{p:HypertracesE} there is a unique hypertrace 
 $\widetilde{\tau}$ on $\CLa$ such that $\tau=\left(\widetilde{\tau}\otimes\tr_N\right)_E$.
 The claim now follows from Theorem \ref{t4} and Theorem \ref{t5} applied to
 $\widetilde{\tau}$. Note that 
 $T\circ \tr_E \circ\, \sigma_a = \bigl((T\circ \sigma_a ) \otimes
 \tr_N\bigr)_E$, cf.\ Eq.\ \eqref{eq:136} and Definition \ref{def:TauTimesTr}.

 \textup{2. } Let $a\in \Z_{\le 0}$. By Proposition \ref{p:PreHyperTraceE}, $\tau\restriction\CL^{2a}(M,E)$ is
 a hypertrace. Thus, by the proved part 1.\ we have 
\begin{equation}\label{eq:138}  
 \tau\restriction\CL^{2a}(M,E)=T\circ\tr_E\circ\, \sigma_{2a}+
 \begin{cases}
    \gl\,\TRb_{E,2a},  & \textup{if } 2a\le -n, \\
     \gl\,\Res_{E,2a},  & \textup{if } 2a>-n.
 \end{cases}
\end{equation}
We emphasize that by Eq.\ \eqref{eq:136a} 
\begin{equation}\label{eq:139}  
 \TRb_{E,a}\restriction \CL^{2a}(M,E)  =   
     \begin{cases}
         \gl\,\TRb_{E,2a},  & \textup{if } 2a\le -n, \\
         \gl\,\Res_{E,2a},  & \textup{if } 2a>-n.
     \end{cases}
\end{equation}
Consider for $A\in\CLaE$ 
\[
\widetilde{\tau}(A) := \tau(A)- \gl\, \TRb_{E,a}(A)-
T\circ\tr_E\circ\,\sigma_{2a}(A-\theta_a\circ\pi_a(A)).
\]
Then due to Eq.\ \eqref{eq:139} and Eq.\ \eqref{eq:138} the functional
$\widetilde{\tau}$ vanishes on $\CL^{2a}(M,E)$ and thus is of the form 
$\Phi\circ\pi_a$ with $\Phi\in\left(\CLaE/\CL^{2a}(M,E)\right)^*$. Then
$\tau=\Phi\circ\pi_a+\widetilde{\tau}$ and the theorem is proved.
\end{proof}



\providecommand{\bysame}{\leavevmode\hbox to3em{\hrulefill}\thinspace}
\providecommand{\MR}{\relax\ifhmode\unskip\space\fi MR }
\providecommand{\MRhref}[2]{%
  \href{http://www.ams.org/mathscinet-getitem?mr=#1}{#2}
}
\providecommand{\href}[2]{#2}

\end{document}